\documentclass[11pt]{amsart}

\setlength{\textheight}{615pt}\setlength{\textwidth}{360pt} 
\setlength{\topmargin}{0pt}

\usepackage{amsmath,amssymb,mathrsfs} 
\usepackage[
colorlinks,linkcolor=blue,citecolor=blue,urlcolor=blue]{hyperref}
\usepackage{times}
\usepackage{gensymb}
\usepackage[all]{xy}

\theoremstyle{plain}
    \newtheorem{thm}{Theorem}[section]
    \newtheorem{ppn}[thm]{Proposition}
    \newtheorem{lem}[thm]{Lemma}
    \newtheorem{cor}[thm]{Corollary}
    \newtheorem{conj}[thm]{Conjecture}
\theoremstyle{definition}
    \newtheorem{dfn}[thm]{Definition}
    
\theoremstyle{remark}
    \newtheorem{rmk}[thm]{Remark}    
    \newtheorem{epl}[thm]{Example}

\numberwithin{equation}{section}

\allowdisplaybreaks

\def\C{\mathbb{C}}\def\Q{\mathbb{Q}}\def\R{\mathbb{R}}\def\Z{\mathbb{Z}}\def\F{\mathbb{F}}
\def\ol#1{\overline{#1}}\def\wt#1{\widetilde{#1}}\def\wh#1{\widehat{#1}}
\def\os#1#2{\overset{#1}{#2}}
\def\ot{\otimes}\def\ra{\rightarrow}
\def\a{\alpha}\def\b{\beta}\def\g{\gamma}\def\d{\delta}\def\pd{\partial}\def\e{\varepsilon}
\def\io{\iota}\def\l{\lambda}\def\k{\kappa}\def\z{\zeta}\def\x{\xi}\def\y{\eta}
\def\o{\omega}\def\O{\Omega}

\def\D{\Delta}

\def\G{\Gamma}

\def\L{\Lambda}\def\s{\sigma}\def\t{\tau}

\def\Hom{\operatorname{Hom}}

\def\Ker{\operatorname{Ker}}
\def\Coker{\operatorname{Coker}}

\def\Re{\operatorname{Re}}
\def\Im{\operatorname{Im}}
\def\rank{\operatorname{rank}}
\def\Res{\operatorname{Res}}

\newcommand{\GL}{\operatorname{GL}}
\newcommand{\tr}{\operatorname{tr}}
\newcommand{\cyc}{\operatorname{cyc}}

\newcommand\sD{\mathscr{D}}
\newcommand\sL{\mathscr{L}}

\newcommand\sX{\mathscr{X}}

\newcommand\be{\mathbf{e}}

\newcommand{\A}{{\mathbb{A}}}

\newcommand{\lra}{\longrightarrow}

\renewcommand{\P}{\mathbb{P}}

\newcommand\n{\nu}\newcommand\m{\mu}
\newcommand\vp{\varphi}
\newcommand\inj{\hookrightarrow}

\newcommand\ck{\wh{\k^\times}}
\newcommand\0{\circ}

\newcommand\Fr{\mathrm{Fr}}

\newcommand\ab{\mathrm{ab}}
\newcommand\et{\mathrm{\acute{e} t}}
\newcommand\Gal{\mathrm{Gal}}

\def\sp{\mathrm{sp}}

\def\bmat#1{\begin{bmatrix}#1\end{bmatrix}}

\newcommand\FFF[5]{{}_{#1}F_{#2}\left(\genfrac{}{}{0pt}{}{#3}{#4};#5\right)}

\begin{document}

\title{On the adelic Gaussian hypergeometric function}
\author{Masanori Asakura \and Noriyuki Otsubo}

\address{Department of Mathematics, Hokkaido University, Sapporo, 060-0810 Japan}
\email{asakura@math.sci.hokudai.ac.jp}

\address{Department of Mathematics and Informatics, Chiba University, Chiba, 263-8522 Japan}
\email{otsubo@math.s.chiba-u.ac.jp}

\begin{abstract}
We define the adelic hypergeometric function of special Gaussian type by means of a tower of hypergeometric curves. This function takes values in an adelic completed group ring and interpolates all the hypergeometric functions of the same type over all finite fields. 
It specializes at the unit argument to the adelic beta function of Ihara and Anderson. We prove some transformation formulas and a summation formula for the adelic hypergeometric function, which are known classically for complex hypergeometric functions.   
\end{abstract}

\date{\today.}
\subjclass[2020]{{\bf 33C05}, 11G30, 11S80}
\keywords{Hypergeometric functions, Adelic beta function}

\thanks{Research of the first author supported by JSPS KAKENHI Grant JP23K03025. 
Research of the second author supported by JSPS KAKENHI Grant JP24K06682.}

\maketitle

\section{Introduction}

The Gaussian hypergeometric functions $F(a,b;c;\l)$ with complex parameters $a$, $b$ and $c$ and a variable $\l$ (see Section \ref{s2.4}) is a most well-studied family of special functions. 
In this paper, concentrating on the case where $c=1$, we define the adelic analogue of $F(a,b;1;\l)$ 
and establish its basic properties. 

Our construction is a generalization of the definition of the adelic beta function due to Ihara \cite{ihara} and Anderson \cite{anderson}. 
Recall that by the Euler-Gauss summation formula, the hypergeometric function specializes at $\l=1$ to the beta function as
\begin{equation}\label{e0.1}
F(a,b;1;1)= -\frac{1}{2\pi i} \frac{(1-\be(a))(1-\be(b))}{1-\be(a+b)}B(a,b),
\end{equation}
where $\be(t)=\exp(2\pi it)$ (see \eqref{euler-gauss}). 
Over a finite field $\k$, there are analogous functions 
$F(\a,\b;\g;\l)$ with parameters $\a$, $\b$ and $\g$ being multiplicative characters of $\k$ and $\l \in \k$, taking values in a cyclotomic field (see Section \ref{s4.1}). 
When $\l=\e$ (the unit character), their special values at $\l=1$ reduce to the Jacobi sums: 
\begin{equation}\label{e0.2}
F(\a,\b;\e;1)= j(\a,\b)
\end{equation}
if $\a, \b \ne \e$ (see \eqref{euler-gauss-fin}). 
The similarity between the beta function and the Jacobi sums can be explained by that they are different realizations of a same family of motives associated with the Fermat curves: the former interpolates the complex periods and the latter are the Frobenius traces on the \'etale cohomology. 

In this paper, we consider the hypergeometric curves with the affine equation
\[X_{N,\l} : (1-x^N)(1-y^N)=\l x^Ny^N\]
(see Section \ref{s2.1}). 
Note that it specializes at $\l=1$ to the Fermat curve of degree $N$. 
It is a family of curves over the $\l$-line $L=\P^1-\{0,1,\infty\}$, and $X_{N,\l}$'s for positive integers $N$ form a tower of curves with respect to the division of $N$. 
In Section \ref{s2}, we study the Betti homology and cohomology of $X_{N,\l}$ by giving explicit bases, and express the complex periods in terms of solutions of the hypergeometric differential equations of order two satisfied by $F(a,b;1;\l)$ for $a, b \in N^{-1}\Z$. 
Consider the adelic homology group
\[H_1(X_{\infty,\l},\wh \Z) := \varprojlim_{N} H_1(X_{N,\l},\wh \Z).\]
Since $X_{N,\l}$ has a natural action of $\m_N \times \m_N$ where $\m_N$ is the group of $N$th roots of unity, $H_1(X_{\infty,\l}, \wh \Z)$ becomes a module over the 
completed group ring 
\[\L:=\wh\Z[[\wh\Z(1)\times\Z(1)]], \quad \wh\Z(1):=\varprojlim_N \m_N.\]  
We will show that the homology group is free of rank two over $\L$ (Corollary \ref{c4.2}). 

By Artin's comparison theorem, $H_1(X_{\infty,\l},\wh \Z)$ is identified with the \'etale homology group. 
Hence if $\l$ is an element of a number field $k$, it admits an action of the absolute Galois group $G_k:=\Gal(\ol\Q/k)$, and defines a cocycle 
\[M_\l \colon G_k \to \GL_2(\L). \] 
Our adelic hypergeometric function is defined to be its trace (see Section \ref{s3.3}). 
We remark that the adelic beta function is a cocycle $B\colon G_\Q \to \L^*$ defined by the Galois action on the adelic homology group of the Fermat curves, which is free and cyclic over $\L$ (see Section \ref{s3.4}). 
By evaluating $\tr M_\l$ at characters of $\wh\Z(1)\times \wh\Z(1)$, we define 
\[ F_\l(\s)(a, b) \in \wh \Z \ot \Z[\m_N]\] 
for $\s \in G_{k(\m_N)}$ and $a, b \in N^{-1}\Z/\Z$ (see Definition \ref{d3.9}), which are analogues of the complex functions $F(a,b;1;\l)$.  

If $\l$ is an element of a number field $k$ containing $\m_N$ and $X_{N,\l}$ has good reduction at a prime $v$ of $k$, then the Frobenius trace at $v$ 
on the eigencomponents of the $l$-adic \'etale cohomology of $X_{N,\l}$ with respect to the action of $\m_N \times \m_N$, where $l$ is a prime number such that $v \nmid l$,  
is computed by the Grothendieck-Lefschetz trace formula. By a point counting argument similarly as in Weil \cite{weil} for Fermat curves, the Frobenius traces are expressed by the hypergeometric functions $F(\a,\b;\e;\l \text{ mod } v)$ over the residue field $\k_v$ of $v$ (see Section \ref{s4.1} for the definition),  where $\a, \b$ are $N$-torsion multiplicative characters of $\k_v$ (see Theorem \ref{t4.8}). 
In this way, the adelic function $F_\l(\s)(a,b)$ interpolates all the hypergeometric functions of the same type over all finite fields, 
similarly as the adelic beta function interpolates all the Jacobi sums. 

Classically, many transformation formulas and summation formulas (formulas for special values) are known for complex Gaussian hypergeometric functions, and most of them have their finite analogues (see e.g. \cite{otsubo2}). 
While formulas for the adelic hypergeometric function imply their finite versions by the interpolation property,  
we can conversely derive adelic formulas from the finite versions by the Chebotarev density theorem. 
In Section \ref{s4.4}, we derive the adelic versions of the classical transformation formulas of Euler, Pfaff, Gauss and others and a  summation formula of Kummer (with the restriction to the case ``$c=1$") from the finite versions, 
although it would be more desirable to derive them as the \'etale realization of motivic versions (but see Remark \ref{r4.7}). 

In Section \ref{s5.1}, we give an alternative definition of our adelic hypergeometric function using the relative homology of open subvarieties of $X_{N,\l}$'s, which extends the range of interpolation. Finally in Section \ref{s5.2}, we propose a conjecture which enables us to define the adelic analogue of the generalized hypergeometric functions of type 
$\FFF{d+1}{d}{a_0,\dots, a_d}{1,\dots, 1}{\l}$ in a similar manner to the Gaussian case (i.e. $d=1$) given in this paper. 

There is another definition of $l$-adic Gaussian hypergeometric functions due to Furusho \cite{furusho}, which uses $l$-adic multiple zeta values. It would be interesting to see its relation to the $l$-adic component of our definition. 

\subsection*{Acknowledgements}
We would like to thank Hidekazu Furusho and Takahiro Tsushima for helpful discussions. 

\subsection*{Notations}
Let $\ol\Q$ be the algebraic closure of $\Q$ in $\C$. 
For a positive integer $N$, let $\mu_N$ denote the group of $N$th roots of unity in $\C$. 
Let $\Q^{\ab}=\bigcup_N \Q(\mu_N)$ be the maximal abelian extension of $\Q$, and 
$\Z^\ab=\bigcup_N \Z[\m_N]$ be its integer ring. 
Put $\wh\Z(1)=\varprojlim_N \mu_N$ where the transition map $\mu_M \to \mu_N$ when $N \mid M$ is the $(M/N)$th power map. 
Put $\be(t)=\exp(2\pi i t)$ and $\z_N=\be(1/N)$. 
Then $\z_\infty=(\z_N)_N$ is a topological generator of $\wh\Z(1)$. 
The gamma and the beta functions are defined by 
\[\G(s)=\int_0^\infty e^{-x}x^s\frac{dx}{x}, \quad B(s,t)=\int_0^1 x^s(1-x)^t\frac{dx}{x(1-x)}\]
if $\Re(s)>0$, $\Re(t)>0$. These satisfy functional equations
\[\G(s+1)=s\G(s), \quad \G(s)\G(1-s)=\frac{\pi}{\sin \pi s}, \quad B(s,t)=\frac{\G(s)\G(t)}{\G(s+t)}.\]

\section{Homology of hypergeometric curves}\label{s2}
\subsection{Hypergeometric curves}\label{s2.1}

Let $\P^1$ be the $\l$-line over $\C$ and put $L=\P^1-\{0,1,\infty\}$. 
For each positive integer $N$, let 
\[f_N \colon X_N \to L\] 
be a smooth projective curve defined in 
$\P^1\times \P^1 =\{([x_0:x_1],[y_0:y_1])\}$ over $L$ by  
\begin{equation}\label{e1.1}
(x_0^N-x_1^N)(y_0^N-y_1^N)=\l x_1^Ny_1^N.
\end{equation}
Its affine equation is 
\[(1-x^N)(1-y^N)=\l x^Ny^N,\]
where $x=x_1/x_0$, $y=y_1/y_0$. 
For $\l\in L$, let $X_{N,\l}=f_N^{-1}(\l)$ denote the fiber of $f_N$ over $\l$. 
It has $2N$ points at infinity: 
\[\{(x,\infty) \mid x^N=(1-\l)^{-1}\} \cup \{(\infty, y) \mid y^N=(1-\l)^{-1}\}.\]
The curves $X_N$ for various $N$ form a tower with the  transition morphisms when $N \mid M$
\[\pi_{M,N}\colon X_M \to X_N; \ (x,y)\mapsto (x^{M/N},y^{M/N}).\]  
The morphism $\pi_{N,1}$ is generically Galois with the Galois group 
\[\D_N:=\mu_N\times \mu_N,\]
which acts on $X_N$ as $(\z,\z').(x,y)=(\z x,\z' y)$. 
It ramifies at the $4N$ points on $X_N$ where $x, y \in \{0,\infty\}$ with the ramification indices $N$. 
Hence by the Riemann-Hurwitz formula noting that $X_1$ is rational, the genus of $X_N$ is $(N-1)^2$. 
The actions of $\D_M$ and $\D_N$ when $N\mid M$ are compatible under the morphism $\pi_{M,N}$ and the natural homomorphism $\D_M \to \D_N$. The character group $\wh{\D}_N=\Hom(\D_N,\C^\times)$ consists of  
\[\chi_N^{a,b}(\z,\z')=\z^a\z'^b, \quad (a,b)\in(\Z/N\Z)^{\oplus 2}.\]
Put elements of $\D_N$ as 
\[\x_N=(\z_N,1), \quad \y_N=(1,\z_N), \] 
so that $\D_N=\{\x_N^i\y_N^j \mid i,j \in \Z/N\Z\}$. 
Let $\Z[\D_N]$ be the group ring of $\D_N$ and 
$I_N \subset \Z[\D_N]$ be the ideal generated by the elements $\sum_{i\in\Z/N\Z} \x_N^i$ and $\sum_{i\in\Z/N\Z} \y_N^i$. 

\begin{rmk}\label{r1}
Let $C_{N,\l}^{a,b}$ be a smooth projective curve whose affine equation is 
\[v^N=(-u)^a (1-u)^{N-a}(1-\l u)^{N-b}.\]
For example, $C_{2,\l}^{1,1}$ is the Legendre elliptic curve. 
There exists a finite morphism $X_{N,\l}\to C_{N,\l}^{a,b}$ given by 
\[u=-\frac{x^N}{1-x^N}, \quad v=\frac{x^ay^b}{(1-x^N)y^N}\] 
One can express the motive (or the Jacobian variety) of $X_{N,\l}$ in terms of those of $C_{N,\l}^{a,b}$'s, 
but the former is easier to treat because it is a single curve for which we have no need of resolution of singularities.  
\end{rmk}

\subsection{Cohomology of $X_{N,\l}$}\label{s2.2}

For $a, b \in \{1,2,\dots, N-1\}$, define rational differential $1$-forms on $X_{N,\l}$ by 
\begin{align*}
&\o_{N,\l}^{a,b}=N\frac{x^ay^b}{1-x^N}\frac{dx}{x} =-N\frac{x^ay^b}{1-y^N}\frac{dy}{y}, 
\\&\y_{N,\l}^{a,b}=-\frac{b}{N\l}(1-y^N)\o_{N,\l}^{a,b}=-\frac{b}{\l}x^ay^b\frac{1-y^N}{1-x^N} \frac{dx}{x}
=\frac{b}{\l} x^ay^b \frac{dy}{y}.
\end{align*}
One easily verifies that $\o_{N,\l}^{a,b}$ are holomorphic and that  $\y_{N,\l}^{a,b}$ are non-holomorphic but of the second kind. 
These are eigenvectors for the $\D_N$-action with the character $\chi_N^{a,b}$, i.e. 
\[g^*\o_{N,\l}^{a,b}=\chi_N^{a,b}(g) \o_{N,\l}^{a,b}, \quad g^*\y_{N,\l}^{a,b}=\chi_N^{a,b}(g) \y_{N,\l}^{a,b}\]
for any $g\in \D_N$.  
These $1$-forms are normalized so that the compatibility 
\begin{equation}\label{e2.3}
\pi_{nN,N}^*(\o_{N,\l}^{a,b})=\o_{nN,\l}^{na,nb}, \quad \pi_{nN,N}^*(\y_{N,\l}^{a,b})=\y_{nN,\l}^{na,nb}
\end{equation}
holds. 
These $1$-forms define classes in the Betti cohomology  $H^1(X_{N,\l},\C)$, which by abuse of notation
we write by the same letters. 

\begin{rmk}With the notations in Remark \ref{r1}, 
\[\o_{N,\l}^{a,b}=\frac{v du}{u(1-u)(1-\l u)}, \quad \y_{N,\l}^{a,b}=\frac{b}{N} \frac{v du}{(1-u)(1-\l u)^2}.\]
\end{rmk}

We have the non-degenerate pairing 
\[H^1(X_{N,\l},\C)^{\ot 2} \os\cup\lra H^2(X_{N,\l},\C) \os\deg\simeq \C\] 
defined by the cup product and the degree map. 
We have the Hodge decomposition
\[H^1(X_{N,\l},\C) = H^{1,0}(X_{N,\l}) \oplus H^{0,1}(X_{N,\l}),\]
and the two components are dual to each other under the pairing as above. 

\begin{lem}\label{l2.1}
For $a, b, a', b' \in \{1,2,\dots, N-1\}$, we have:
\begin{enumerate}
\item $\deg(\o_{N,\l}^{a,b} \cup \o_{N,\l}^{a',b'})=0$, 
\item $\deg(\y_{N,\l}^{a,b} \cup \y_{N,\l}^{a',b'})=0$, 
\item $\deg(\o_{N,\l}^{a,b} \cup \y_{N,\l}^{a',b'})=\begin{cases} \frac{N^2}{\l(1-\l)} & \text{if $a+a'=b+b'=N$},\\
0 & \text{otherwise}.\end{cases}$
\end{enumerate}
In particular, $\o_{N,\l}^{a,b}$ and $\y_{N,\l}^{a,b}$ are non-trivial. 
\end{lem}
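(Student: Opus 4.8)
The plan is to combine the $\D_N$-equivariance of the pairing with the residue calculus for differentials of the second kind. Since $\D_N$ acts on $X_{N,\l}$ by automorphisms, it acts trivially on $H^2(X_{N,\l},\C)\simeq\C$, so the degree pairing satisfies $\deg(g^*u\cup g^*v)=\deg(u\cup v)$ for $g\in\D_N$. As $\o_{N,\l}^{a,b}$ and $\y_{N,\l}^{a,b}$ are eigenvectors for $\chi_N^{a,b}$, the cup product of any two of our forms is scaled by $\chi_N^{a+a',b+b'}(g)$, hence vanishes unless $\chi_N^{a+a',b+b'}$ is trivial, i.e. unless $a+a'\equiv b+b'\equiv0\pmod N$. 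Since $a,a',b,b'\in\{1,\dots,N-1\}$ this forces $a+a'=b+b'=N$. This already gives the ``otherwise'' case of (iii) and reduces (i)--(iii) to the resonant case $a'=N-a$, $b'=N-b$.

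Part (i) is then immediate from Hodge type: the $\o_{N,\l}^{a,b}$ are holomorphic, so $\o_{N,\l}^{a,b}\cup\o_{N,\l}^{a',b'}$ lands in $H^{2,0}(X_{N,\l})=0$; equivalently $\o_{N,\l}^{a,b}\wedge\o_{N,\l}^{a',b'}$ is a $(2,0)$-form on a curve and vanishes identically. The same mechanism would give (ii), and the shape of (iii), at one stroke if one knew that each class $[\y_{N,\l}^{a,b}]$ lay in $H^{0,1}(X_{N,\l})$; since establishing that directly is essentially as hard as the statement, I would instead compute (ii) and the resonant value of (iii) by residues.

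For this I use the residue description of the pairing of two second-kind differentials $\alpha,\beta$,
\[\deg(\alpha\cup\beta)=\sum_P\Res_P(F_\alpha\,\beta),\]
where $F_\alpha$ is a local primitive of $\alpha$ near $P$. One checks that $\y_{N,\l}^{a,b}$ is holomorphic on the affine locus and at the points $(\infty,y_0)$, and that its only poles are at the $N$ points $P_{x_0}=(x_0,\infty)$ with $x_0^N=(1-\l)^{-1}$, where it is residue-free. Taking $s=y^{-1}$ as a local parameter at $P_{x_0}$ and using the curve equation in the form $(1-x^N)(s^N-1)=\l x^N$ to expand $x^a=x_0^a\bigl(1+\tfrac aN\tfrac{\l}{1-\l}s^N+\cdots\bigr)$, together with $y^{b-1}\,dy=-s^{-1-b}\,ds$, gives the complete local expansions of $\o_{N,\l}^{a,b}$ and $\y_{N,\l}^{a,b}$. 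Integrating term by term to form primitives and extracting the coefficient of $s^{-1}\,ds$ in $F_{\o}\y'$, then summing over the $N$ points $P_{x_0}$ (using $\sum_{x_0^N=(1-\l)^{-1}}x_0^{a+a'}=N(1-\l)^{-1}$, valid precisely because $a+a'=N$), produces the constant $\tfrac{N^2}{\l(1-\l)}$ of (iii); for (ii) the identical procedure applied to $F_{\y}\y'$ is in play, and the content of that part is the assertion that the resulting residues cancel. The non-triviality claim then follows from (iii), which exhibits a nonzero pairing of $\o_{N,\l}^{a,b}$ with $\y_{N,\l}^{N-a,N-b}$.

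I expect the main obstacle to be exactly this residue bookkeeping at infinity: once $b+b'=N$, several terms of the expansions contribute to the $s^{-1}$-coefficient, so one must carry the local expansions to high enough order, track the signs, and control the dependence on the root $x_0$ so that the per-point residues assemble into the stated closed form. The preliminary reductions---the equivariance argument and the Hodge-type vanishing for (i)---are routine by comparison.
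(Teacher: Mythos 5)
Your reductions are sound and coincide with the paper's: the $\D_N$-equivariance of the pairing kills everything outside the resonant case $a+a'=b+b'=N$ (the paper notes this parenthetically), (i) is the Hodge-type vanishing, and for (iii) your residue setup (poles of $\y_{N,\l}^{a',b'}$ only at the $N$ points $(x_0,\infty)$ with $x_0^N=(1-\l)^{-1}$, local coordinate $s=1/y$, and the expansion $x^a=x_0^a(1+\tfrac{a}{N}\tfrac{\l}{1-\l}s^N+\cdots)$) is exactly the paper's computation. The bookkeeping there is lighter than you fear: a term $s^{kN-b'}$ of the primitive against a term $s^{(j+1)N-b-1}\,ds$ of $\o_{N,\l}^{a,b}$ can produce $s^{-1}\,ds$ only if $(k+j+1)N=b+b'\le 2N-2$, so only $k=j=0$ contributes and $\tfrac{N^2}{\l(1-\l)}$ drops out at once.

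The genuine gap is (ii), precisely at the step you defer (``the content of that part is the assertion that the resulting residues cancel''): they do not cancel. Carrying out your own expansions, a local primitive of $\y_{N,\l}^{a',b'}$ is $\tfrac{x_0^{a'}}{\l}\bigl(s^{-b'}-\tfrac{a'b'}{N(N-b')}\tfrac{\l}{1-\l}s^{N-b'}+\cdots\bigr)$ while $\y_{N,\l}^{a,b}=-\tfrac{b}{\l}x_0^{a}\bigl(s^{-b-1}+\tfrac{a}{N}\tfrac{\l}{1-\l}s^{N-b-1}+\cdots\bigr)ds$; when $b+b'=N$ the coefficient of $s^{-1}\,ds$ in the product receives two contributions (leading times subleading, and subleading times leading), which sum to $\tfrac{x_0^{a+a'}}{N\l(1-\l)}(a'b'-ab)$, and $a'b'-ab=(N-a)(N-b)-ab=N(N-a-b)$ vanishes only when $a+b=N$. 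Summing over the $N$ points gives, up to the overall sign convention, $\deg(\y_{N,\l}^{a,b}\cup\y_{N,\l}^{a',b'})=\tfrac{N(a+b-N)}{\l(1-\l)^2}$. This is corroborated independently: since $[\y_{N,\l}^{a,b}]=\nabla_{d/d\l}[\o_{N,\l}^{a,b}]$ by Theorem \ref{t2.2}, flatness of the cup product under the Gauss--Manin connection together with the differential equation \eqref{hgf-de} satisfied by $[\o_{N,\l}^{a',b'}]$ yields $\deg(\y\cup\y')=\tfrac{d}{d\l}q+\bigl(\tfrac{1}{\l}-\tfrac{(a'+b')/N}{1-\l}\bigr)q$ with $q=\deg(\o^{a,b}\cup\y^{a',b'})=\tfrac{N^2}{\l(1-\l)}$, which again equals $\tfrac{N(N-a'-b')}{\l(1-\l)^2}=\tfrac{N(a+b-N)}{\l(1-\l)^2}$. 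So a faithful execution of your plan refutes (ii) for $a+b\ne N$ rather than proving it; the cancellation fails because the $s^N$-correction to $x^a$, which one is tempted to drop (and which the paper's one-line treatment of (ii) in effect drops by freezing $x$ at $x_0$), is exactly what feeds the $s^{-1}$-coefficient when $b+b'=N$. Parts (i) and (iii) and the non-triviality claim are unaffected, as is the fact that $\{\o_{N,\l}^{a,b},\y_{N,\l}^{a,b}\}$ spans the $\chi_N^{a,b}$-eigenspace of $H^1(X_{N,\l},\C)$.
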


\begin{proof}
Since $\o_{N,\l}^{a,b}$ are holomorphic, (i) is immediate. 
Recall the formula: for $1$-forms of the second kind $\o$ and $\y$ 
\[\deg(\o \cup \y)=-\sum_{P\in X_{N,\l}} \Res_P\left(\left(\int\y\right) \o\right),\]
where $\int\y$ denotes a local primitive of $\y$ at $P$ and $\Res_P$ denotes the residue at $P$.   
The $1$-forms $\y_{N,\l}^{a,b}$ are holomorphic except at $P_i=(\z_N^i(1-\l)^{-1/N},\infty)$ ($i\in\Z/N\Z$),  
where $(1-\l)^{-1/N}$ is a fixed root. 
If we put $v=1/y$, then 
\begin{equation}\label{e2.4}
\o_{N,\l}^{a,b}=-N\frac{x^av^{N-b}}{1-v^N}\frac{dv}{v}, \quad \y_{N,\l}^{a',b'}=-\frac{b'}{\l}x^{a'}v^{-b'}\frac{dv}{v}.
\end{equation}
The statement (ii) follows immediately using the formula as above. 
Since 
\[\Res_{P_i}\left(\left(\int \y_N^{a',b'} \right) \o_N^{a,b} \right)=-\frac{N}{\l}\Res_{P_i} \left(\frac{x^{a+a'} v^{N-b-b'}}{1-v^N}\frac{dv}{v}\right)
\]
equals $-N \z_N^{a+a'}/{(\l(1-\l))}$ if $b+b'=N$ and $0$ otherwise, we obtain (iii)  (the latter case is also clear from the eigenvalues).  
\end{proof}

For a $\C$-vector space $V$ with $\D_N$-action (i.e. a $\C[\D_N]$-module), let 
\[V=\bigoplus_{(a,b)\in(\Z/N\Z)^{\oplus 2}} V(\chi_N^{a,b}),
\quad 
V(\chi_N^{a,b}):=\C\ot_{\C[\D_N],\chi^{a,b}} V,\]
be  the decomposition into the $\chi_N^{a,b}$-eigenspaces. 

\begin{ppn}\label{p2.2}For $a, b \in \{1,2,\dots, N-1\}$, we have 
\[H^{1,0}(X_{N,\l})(\chi_N^{a,b})=\C \o_{N,\l}^{a,b}, \quad H^{0,1}(X_{N,\l})(\chi_N^{a,b})=\C \y_{N,\l}^{a,b}.\] 
The space  $H^1(X_{N,\l},\C)$ (resp. $H^{1,0}(X_{N,\l})$, $H^{0,1}(X_{N,\l})$) has a $\C$-basis 
\[\{\o_{N,\l}^{a,b}, \y_{N,\l}^{a,b}\}_{1\le a,b \le N-1}\quad  \text{(resp.  $\{\o_{N,\l}^{a,b}\}_{1\le a,b \le N-1}$, \ $\{\y_{N,\l}^{a,b}\}_{1\le a,b \le N-1}$)}\] 
and is free of rank two (resp. one, one) over $(\Z[\D_N]/I_N)\ot\C$. 
\end{ppn}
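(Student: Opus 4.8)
The plan is to reduce everything to the genus computation of Section~\ref{s2.1} together with the non-triviality and orthogonality statements of Lemma~\ref{l2.1}; no further geometric input should be needed. First I would record the dimensions: since $X_{N,\l}$ has genus $(N-1)^2$, we have $\dim_\C H^{1,0}(X_{N,\l})=(N-1)^2$ and $\dim_\C H^1(X_{N,\l},\C)=2(N-1)^2$. The $(N-1)^2$ forms $\o_{N,\l}^{a,b}$ with $1\le a,b\le N-1$ are holomorphic, are nonzero by Lemma~\ref{l2.1}, and are eigenvectors for the pairwise distinct characters $\chi_N^{a,b}$; since eigenvectors of the abelian group $\D_N$ attached to distinct characters are automatically linearly independent, these forms are independent. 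As their number equals $\dim_\C H^{1,0}$, they form a basis of $H^{1,0}(X_{N,\l})$, and consequently $H^{1,0}(X_{N,\l})(\chi_N^{a,b})=\C\o_{N,\l}^{a,b}$ for $1\le a,b\le N-1$, while this eigenspace vanishes whenever $a\equiv0$ or $b\equiv0$.

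Next I would bring in the forms $\y_{N,\l}^{a,b}$. The crucial observation is that each class $\y_{N,\l}^{a,b}$ is nonzero and does not lie in $H^{1,0}$: by Lemma~\ref{l2.1}(i) the cup product of any two holomorphic forms has degree $0$, whereas $\deg(\o_{N,\l}^{N-a,N-b}\cup\y_{N,\l}^{a,b})=N^2/(\l(1-\l))\ne0$ by Lemma~\ref{l2.1}(iii), using that $N-a,N-b\in\{1,\dots,N-1\}$. Hence inside the eigenspace of $\chi_N^{a,b}$ the classes $\o_{N,\l}^{a,b}$ and $\y_{N,\l}^{a,b}$ are linearly independent, and classes attached to distinct $(a,b)$ lie in distinct eigenspaces, so the $2(N-1)^2$ classes $\{\o_{N,\l}^{a,b},\y_{N,\l}^{a,b}\}_{1\le a,b\le N-1}$ are independent; by the dimension count they form a basis of $H^1(X_{N,\l},\C)$. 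In particular the $\chi_N^{a,b}$-eigenspace of $H^1$ is two-dimensional for $1\le a,b\le N-1$ and vanishes otherwise, and comparing with the first paragraph the component $H^{0,1}(\chi_N^{a,b})$ is one-dimensional; since $\y_{N,\l}^{a,b}\notin H^{1,0}$ its $(0,1)$-part is nonzero and therefore spans $H^{0,1}(\chi_N^{a,b})$, which yields the stated equalities and the bases of $H^{1,0}$ and $H^{0,1}$.

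Finally, for the module assertions I would identify the ring $R:=(\Z[\D_N]/I_N)\ot\C$. Decomposing $\C[\D_N]\cong\prod_{(a,b)\in(\Z/N\Z)^{\oplus2}}\C$ through the characters $\chi_N^{a,b}$, the generators $\sum_i\x_N^i$ and $\sum_i\y_N^i$ of $I_N$ act as $N$ times the projector onto the factors with $a\equiv0$, respectively $b\equiv0$, so $R\cong\prod_{1\le a,b\le N-1}\C$, a product of $(N-1)^2$ copies of $\C$ indexed exactly by the characters found above. Over such a product of fields a module is free of rank $r$ precisely when each of these eigencomponents has $\C$-dimension $r$. The eigenspace dimensions just computed — namely $2$ for $H^1$, $1$ for each of $H^{1,0}$ and $H^{0,1}$, and $0$ for all remaining characters — then give freeness of ranks $2$, $1$, $1$ respectively, with the indicated families of forms serving as the $\C$-bases.

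I expect the only genuinely delicate point to be the clause $H^{0,1}(\chi_N^{a,b})=\C\y_{N,\l}^{a,b}$. Because $\y_{N,\l}^{a,b}$ is a differential of the second kind with actual poles, its Betti class is a priori of mixed Hodge type, so what is really being asserted, and all that the argument uses, is that its image in the one-dimensional quotient $H^1(\chi_N^{a,b})/H^{1,0}(\chi_N^{a,b})\cong H^{0,1}(\chi_N^{a,b})$ is nonzero. That nonvanishing rests entirely on the precise residue computation of Lemma~\ref{l2.1}(iii), so securing and correctly invoking that computation is the crux of the whole proposition; everything else is bookkeeping with characters and dimensions.
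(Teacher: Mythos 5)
Your proof is correct and follows essentially the same route as the paper's: Lemma \ref{l2.1} for non-vanishing and non-degeneracy of the classes, the genus count $\dim_\C H^{1,0}(X_{N,\l})=(N-1)^2$, separation by the distinct characters $\chi_N^{a,b}$, and the identification of $(\Z[\D_N]/I_N)\ot\C$ with the product of the eigencomponents having $a,b\ne 0$. Your closing caveat is well taken and is in fact the only place where you are more careful than the paper: the paper's proof simply asserts $\y_{N,\l}^{a,b}\in H^{0,1}(X_{N,\l})$ ``by Lemma \ref{l2.1},'' whereas, as you observe, the cup-product computations only show that this class is nonzero modulo $H^{1,0}(X_{N,\l})$ and hence spans the one-dimensional graded quotient, which is exactly what the subsequent arguments use.
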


\begin{proof}
Since $\o_{N,\l}^{a,b} \in H^{1,0}(X_{N,\l})=H^0(X_{N,\l}, \O^1_{X_{N,\l}})$, we have 
$\y_{N,\l}^{a,b} \in H^{0,1}(X_{N,\l})$ by Lemma \ref{l2.1}. 
Since $\o_{N,\l}^{a,b}$'s (resp. $\y_{N,\l}^{a,b}$'s) belong to different eigenspaces and 
$\dim_\C H^{1,0}(X_{N,\l})=(N-1)^2$, the statement on the basis follows. 
Since $I_N$ annihilates the $\chi_N^{a,b}$-part if $a\ne0$ and $b\ne 0$, $H^1(X_{N,\l},\C)$ and its Hodge components are 
$(\Z[\D_N]/I_N)\ot\C$-modules, and their freeness follows  
 since $\dim_\C (\Z[\D_N]/I_N)\ot\C =(N-1)^2$. 
\end{proof}

\subsection{Homology of $X_{N,\l}$}\label{s2.3}

We define elements of the Betti homology $\a_{N,\l}$, $\b_{N,\l} \in H_1(X_{N,\l},\Z)$ as follows. 
Fix a sufficiently small positive real number $\e$, and define loops $\a_{N,\e}$ and $\b_{N,\e}$ on $X_{N,\e}$ as follows. 
The loops $\a_{N,\l}$, $\b_{N,\l}$ for general $\l$ are defined to be the continuation of the loops at $\l=\e$ along a path in the $\l$-plane $\C-\{0,1\}$ (hence they depend on the choice of the path). 

First, define a loop $\a_{N,\e}\colon [0,1] \to X_{N,\e}$ by 
\[x(s)=1+\sqrt{\e} \,\be(-s), \quad |y(s)-1|<\sqrt{\e}.\] 
On the $x$-plane, this is a negatively oriented small circle around $1$. 
The cycle $\a_{N,\l}$ vanishes as $\l \to 0$ (in $\C-[1,\infty)$). 

Secondly, define a path $\d_{N,\e}\colon [0,1] \to X_{N,\e}$ from $(x,y)=(0,1)$ to $(1,0)$ by 
\[x(t)=\sqrt[N]{t}, \quad y(t)=\sqrt[N]{\frac{1-t}{1-(1-\e)t}} .\] 
Then
\[\b_{N,\e}:=((1-\x_N)(1-\y_N))_*\d_{N,\e}\]
becomes a cycle connecting four points $(x,y)=(0,1)$, $(1,0)$, $(0,\z_N)$ and $(\z_N,0)$. 

By abuse of notation,  we write the homology classes of $\a_{N,\l}$ and $\b_{N,\l}$ by the same letters. 
These classes are compatible under the transition maps, i.e. 
\[(\pi_{nN,N})_*(\a_{nN})=\a_N, \quad (\pi_{nN,N})_*(\b_{nN})=\b_{N}.\]
The group $\D_N$ acts on the homology and the cohomology of $X_{N,\l}$ respectively by the push-forward $g_*$ and the pull-back $g^*$ ($g \in \D_N$). Hence the group ring $\Z[\D_N]$ acts on the (co)homology, and we often omit writing the scripts ${}_*$ and ${}^*$ for these actions. 

\begin{thm}\label{t2.1}
The group $H_1(X_{N,\l},\Z)$ is a free $\Z[\D_N]/I_N$-module of rank two generated by $\a_{N,\l}$ and $\b_{N,\l}$. 
\end{thm}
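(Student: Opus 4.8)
The plan is to establish the theorem in two stages: first verify that $H_1(X_{N,\l},\Z)$ is free of rank two over $\Z[\D_N]/I_N$ as an abstract module by a rank computation, and then show that the specific classes $\a_{N,\l}$ and $\b_{N,\l}$ generate it. The rank count is the easy part and dualizes what has already been done on the cohomology side. Indeed, by universal coefficients and the fact that $H_1$ of a smooth projective curve is torsion-free, $H_1(X_{N,\l},\Z)$ is a free $\Z$-module of rank $2g=2(N-1)^2$. The $\D_N$-action makes it a module over $\Z[\D_N]$, and since $\D_N$ acts on the $4N$ ramification points with the branches where $x$ or $y$ equals $0$ or $\infty$, the idempotent-type relations force $I_N$ to annihilate $H_1(X_{N,\l},\Q)$; this is dual to the statement in Proposition~\ref{p2.2} that $H^1$ is a module over $\Z[\D_N]/I_N$. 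Since $\Z[\D_N]/I_N$ has $\Z$-rank $(N-1)^2$ and $H_1$ has $\Z$-rank $2(N-1)^2$, the module has rank two over $\Z[\D_N]/I_N$ after tensoring with $\Q$.

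Next I would prove that $\a_{N,\l}$ and $\b_{N,\l}$ actually generate $H_1(X_{N,\l},\Z)$ over $\Z[\D_N]/I_N$, not merely over $\Z[\D_N]\ot\Q$. The cleanest route is to exhibit a perfect duality: I would compute the period integrals $\int_{g_*\a_{N,\l}} \o_{N,\l}^{a,b}$, $\int_{g_*\a_{N,\l}} \y_{N,\l}^{a,b}$ and similarly against $\b_{N,\l}$ for $g$ ranging over $\D_N$, and assemble them into the full period matrix. Equivalently, working eigencomponent by eigencomponent, I would evaluate the pairing of the $\chi_N^{a,b}$-projection of $\a_{N,\l}$ (and of $\b_{N,\l}$) against the basis $\o_{N,\l}^{a,b}$, $\y_{N,\l}^{a,b}$ furnished by Proposition~\ref{p2.2}. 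The integral over $\a_{N,\e}$, a small circle around $x=1$, should be computed by a residue, while the integral over $\b_{N,\e}=((1-\x_N)(1-\y_N))_*\d_{N,\e}$ reduces to a beta-type integral along the path $\d_{N,\e}$. The goal is to show the resulting $2\times2$ period matrix in each eigencomponent has determinant equal to a unit times the self-intersection quantity $N^2/(\l(1-\l))$ appearing in Lemma~\ref{l2.1}, i.e. that the pairing between $\{\a,\b\}$ and $\{\o,\y\}$ is nondegenerate over $\C$ in every $\chi_N^{a,b}$-eigenspace with $a,b\ne0$.

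To upgrade nondegeneracy over $\C$ to generation over the integral ring $\Z[\D_N]/I_N$, I would use the intersection pairing on homology. The classes $\a_{N,\l}$ and $\b_{N,\l}$, together with their $\D_N$-translates, span a submodule $H'$; I would compute the intersection numbers $\a_{N,\l}\cdot g_*\b_{N,\l}$ geometrically—$\a_{N,\l}$ is a small loop around $x=1$ and $\b_{N,\l}$ is built from the arc $\d_{N,\e}$, so the intersections are controlled by how the translated arcs cross the circle—and verify that the resulting intersection matrix over $\Z[\D_N]/I_N$ is unimodular (its determinant a unit). Since the intersection pairing on $H_1(X_{N,\l},\Z)$ is perfect and $\D_N$-equivariant, unimodularity of the restricted pairing forces $H'=H_1(X_{N,\l},\Z)$, giving freeness and generation simultaneously.

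\textbf{The main obstacle} will be the last step: controlling the integral intersection matrix precisely enough to see that it is unimodular over the ring $\Z[\D_N]/I_N$, rather than merely invertible after inverting $N$ or tensoring with $\Q$. The geometric bookkeeping of how the $\D_N$-orbit of the arc $\d_{N,\e}$ meets the translated circles $\a_{N,\e}$—keeping careful track of signs and orientations as one continues along paths in the $\l$-plane—is where the real work lies, and where one must be vigilant that the element $(1-\x_N)(1-\y_N)$ used to build $\b_{N,\l}$ interacts correctly with the ideal $I_N$ so that no $\Z$-torsion or index is introduced.
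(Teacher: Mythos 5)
Your proposal follows essentially the same route as the paper: annihilation by $I_N$ reduces the claim to showing that the $\D_N$-translates of $\a_{N,\l}$ and $\b_{N,\l}$ (indexed by a $\Z$-basis of $\Z[\D_N]/I_N$) form a $\Z$-basis of $H_1(X_{N,\l},\Z)$, which both you and the paper establish by specializing to $\l=\e$, computing the intersection matrix geometrically, and invoking unimodularity together with the perfectness of the intersection pairing. The intermediate period-matrix step you insert is superfluous (it only re-proves generation after tensoring with $\C$, which your rank count already gives), but the decisive argument is identical to the paper's.
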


\begin{proof}
By Proposition \ref{p2.2}, $H_1(X_{N,\l},\C)$ is annihilated by $I_N$ and $H_1(X_{N,\l},\Z)$, being torsion-free, is also annihilated by $I_N$.  
Since $\Z[\D_N]/I_N$ is a free abelian group generated by $\{\x_N^i\y_N^j \mid i,j=1,2,\dots, N-1\}$, 
it suffices to show that the set 
\[S=\left\{\x_N^i\y_N^j\a_{N,\l}, \x_N^i\y_N^j\b_{N,\l} \bigm| i,j=1,2,\dots, N-1\right\}\] 
of $2(N-1)^2$ elements 
is a $\Z$-basis of $H_1(X_{N,\l}\Z)$. 
To show this, we compute the intersection matrix for $S$. 
Since the intersection number is independent of $\l$, we are reduced to the case $\l=\e$. 
Since  the intersection number satisfies
\[\x_N^i\y_N^ju \cdot \x_N^{i'}\y_N^{j'}v=\x_N^{i-i'}\y_N^{j-j'}u \cdot v\]
for any $u, v \in H_1(X_{N,\l},\Z)$ and $i,j,i',j'$, we are reduced to the case $i'=j'=0$.  
It is easy to see that $\a_{N,\e}$ and $\b_{N,\e}$ meet at a unique point on $\d_{N,\e}$ where 
$s=1/2$ and $t=(1-\sqrt{\e})^N$, and 
$\a_{N,\e} \cdot \b_{N,\e}=-1$. 
More generally, $\x_N^i\y_N^j\a_{N,\e}$ is a vanishing cycle around $(x,y)=(\z_N^i,\z_N^j)$ and 
one sees that 
\begin{equation*}\label{int-n}
\x_N^i\y_N^j\a_{N,\e} \cdot \b_{N,\e}=\begin{cases} -1 & \text{if $(i,j)=(0,0)$ or $(1,1)$},\\ 0 & \text{otherwise}. \end{cases}
\end{equation*}
It is also easy to see that $\x_N^i\y_N^j\a_{N,\e} \cdot \a_{N,\e}=0$ for any $i, j$. 
Therefore, the intersection matrix with respect to $S$ (given the lexicographic order with respect to $(i,j)$ and 
setting $\x_N^i\y_N^j\a_{N,\l}$'s before $\x_N^i\y_N^j\b_{N,\l}$'s) is written as $\begin{bmatrix} O & -A \\ A & *\end{bmatrix}$ where 
$A=\begin{bmatrix} I & & &  \\ B&I \\& \ddots & \ddots  \\ & & B&I\end{bmatrix}$ is a block matrix of size $N-1$, 
where $I$ is the identity matrix and 
$B=\begin{bmatrix} 0 \\ 1& 0 \\ &\ddots & \ddots \\ & & 1& 0\end{bmatrix}$ both of size $N-1$.
Since $\det\begin{bmatrix} O & -A \\ A & *\end{bmatrix}=(\det A)^2=1$, the theorem follows. 
\end{proof}

\begin{rmk}
The intersection numbers of $\x_N^i\y_N^j\b_{N,\l}$'s, written $*$ above, can be computed using \cite[Corollary 4.3]{otsubo1}. 
\end{rmk}

\subsection{Complex hypergeometric functions}\label{s2.4}

Let us recall basic facts about the classical Gaussian hypergeometric functions (cf. \cite{erdelyi}). 
For complex parameters $a$, $b$, $c$ with $c \not\in\Z_{\le 0}$, the hypergeometric function is defined by the power series 
\[F(a,b;c;\l)=\sum_{n=0}^\infty \frac{(a)_n(b)_n}{(1)_n(c)_n}\l^n,\]
convergent on $|\l|<1$, 
where the Pochhammer symbol is defined by 
\[(a)_n=\frac{\G(a+n)}{\G(a)}=\prod_{i=0}^{n-1}(a+i).\] 
It is a solution, holomorphic at $\l=0$, of the hypergeometric differential equation
\begin{equation}\label{hgf-de}
\left[\frac{d^2}{d\l^2}+\left(\frac{c}{\l}-\frac{a+b+1-c}{1-\l}\right)\frac{d}{d\l} -\frac{ab}{\l(1-\l)}\right]y=0.
\end{equation}
By the symmetry with respect to $\l \leftrightarrow 1-\l$, 
$F(a,b;a+b+1-c;1-\l)$ 
is another solution of \eqref{hgf-de}, holomorphic at $\l=1$. 
In particular, the two functions are continued to multivalued functions on $L$. 
We have the integral representation of Euler type
\begin{equation}\label{hgf-i}
B(b,c-b)F(a,b;c;\l)=\int_0^1 (1-\l t)^{-a} t^b(1-t)^{c-b} \frac{dt}{t(1-t)}
\end{equation}
if $0<\Re(b)<\Re(c)$, from which follows the Euler-Gauss summation formula
\begin{equation}\label{euler-gauss}
F(a,b;c;1)=\frac{\G(c)\G(c-a-b)}{\G(c-a)\G(c-b)}
\end{equation}
if $\Re(c-a-b)>0$. 
The derivative of $F(a,b;c;\l)$ is again a hypergeometric function with integrally shifted parameters: 
\begin{equation}\label{hgf-d}
\frac{d}{d\l} F(a,b;c;\l)=\frac{ab}{c} F(a+1,b+1;c+1;\l).
\end{equation}

\subsection{Periods of $X_{N,\l}$}

Let $a, b \in \{1,2,\dots, N-1\}$ and put for brevity
\begin{align*}
& f_N^{a,b}(\l)=2\pi i F\left(\frac{a}{N},\frac{b}{N};1;\l\right), 
\\& g_N^{a,b}(\l)=B\left(\frac{a}{N},\frac{b}{N}\right)F\left(\frac{a}{N},\frac{b}{N};\frac{a}{N}+\frac{b}{N};1-\l\right).
\end{align*}
These form a fundamental solution set of the differential equation \eqref{hgf-de} with the parameters $(a/N, b/N, 1)$ in place of $(a, b, c)$. 

\begin{thm}\label{t2.2}
For $a, b\in\{1,2,\dots, N-1\}$, we have 
\begin{equation*}
\begin{bmatrix} 
\int_{\a_{N,\l}} \o^{a,b}_{N,\l} & \int_{\b_{N,\l}} \o^{a,b}_{N,\l}  \\
\int_{\a_{N,\l}} \y^{a,b}_{N,\l}&\int_{\b_{N,\l}} \y^{a,b}_{N,\l}
\end{bmatrix}
=
\begin{bmatrix} f_N^{a,b}(\l) & (1-\z_N^a)(1-\z_N^b)g_N^{a,b}(\l) \\
\frac{d}{d\l} f_N^{a,b}(\l)  & (1-\z_N^a)(1-\z_N^b)\frac{d}{d\l} g_N^{a,b}(\l)
\end{bmatrix}
\end{equation*}
where the entries are multivalued functions on $L$. 
\end{thm}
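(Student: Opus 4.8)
The plan is to evaluate the four period integrals, after one structural reduction that cuts the work in half. I claim that the Gauss--Manin derivative of $\o_{N,\l}^{a,b}$ with respect to $\l$ is exactly $\y_{N,\l}^{a,b}$, so that the bottom row of the matrix is the $\l$-derivative of the top row. Working in the chart in which $x$ is the coordinate and $y$ is determined on $X_{N,\l}$ by $y^N=(1-x^N)/(1-(1-\l)x^N)$, one computes $\partial_\l(y^N)=-y^N(1-y^N)/\l$ holding $x$ fixed, and hence $\partial_\l(\o_{N,\l}^{a,b})=-\frac{b}{N\l}(1-y^N)\,\o_{N,\l}^{a,b}=\y_{N,\l}^{a,b}$ on the nose. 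Since the explicit cycles are parametrized with an $\l$-independent $x$-coordinate (namely $x=\sqrt[N]{t}$ along $\d_{N,\l}$, and a fixed small circle for $\a_{N,\l}$), differentiation under the integral sign gives $\frac{d}{d\l}\int_{\gamma}\o_{N,\l}^{a,b}=\int_{\gamma}\y_{N,\l}^{a,b}$ for $\gamma\in\{\a_{N,\l},\b_{N,\l}\}$, first near $\l=\e$ and then for all $\l$ by analytic continuation. It therefore suffices to compute the two top-row entries.

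For the $\b_{N,\l}$-column I would integrate first over $\d_{N,\l}$. As $x=\sqrt[N]{t}$ is independent of $\l$, the form $dx/x$ becomes $\frac1N\,dt/t$ and
\[\int_{\d_{N,\l}}\o_{N,\l}^{a,b}=\int_0^1 t^{a/N-1}(1-t)^{b/N-1}\bigl(1-(1-\l)t\bigr)^{-b/N}\,dt.\]
This is the Euler representation \eqref{hgf-i} with parameters $(a,b,c)=(b/N,a/N,a/N+b/N)$ and argument $1-\l$; using the symmetry $F(a,b;c;\cdot)=F(b,a;c;\cdot)$ it equals $g_N^{a,b}(\l)$. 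Since $\b_{N,\l}=((1-\x_N)(1-\y_N))_*\d_{N,\l}$ and $\o_{N,\l}^{a,b}$ lies in the $\chi_N^{a,b}$-eigenspace, the change of variables $\int_{g_*\gamma}\o=\int_\gamma g^*\o$ multiplies the integral by $(1-\z_N^a)(1-\z_N^b)$, which is the claimed top-right entry; the bottom-right entry is its $\l$-derivative by the previous paragraph.

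For the $\a_{N,\l}$-column I would expand directly. On the small circle about $x=1$ one has $\o_{N,\l}^{a,b}=N x^{a-1}(1-x^N)^{b/N-1}(1-(1-\l)x^N)^{-b/N}\,dx$, and factoring $1-(1-\l)x^N=(1-x^N)\bigl(1+\tfrac{\l x^N}{1-x^N}\bigr)$ cancels the fractional power of $1-x^N$, leaving
\[\int_{\a_{N,\l}}\o_{N,\l}^{a,b}=N\sum_{n\ge0}(-1)^n\frac{(b/N)_n}{n!}\,\l^n\oint\frac{x^{a-1+Nn}}{(1-x^N)^{n+1}}\,dx,\]
a series of branch-free contour integrals. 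The circle encloses only the pole at $x=1$ and is negatively oriented, so each integral is $-2\pi i\,\Res_{x=1}$; the residue identity $\Res_{x=1}\frac{x^{a-1+Nn}}{(1-x^N)^{n+1}}\,dx=\frac{(-1)^{n+1}}{N}\frac{(a/N)_n}{n!}$ then collapses the sum to $2\pi i\sum_n\frac{(a/N)_n(b/N)_n}{(n!)^2}\l^n=f_N^{a,b}(\l)$, and the bottom-left entry follows as before. Alternatively one may proceed conceptually: by Proposition \ref{p2.2} the classes $\o_{N,\l}^{a,b},\y_{N,\l}^{a,b}$ span the rank-two $\chi_N^{a,b}$-eigencomponent, so every period solves the order-two Picard--Fuchs equation; a Griffiths--Dwork reduction of $\partial_\l\y_{N,\l}^{a,b}$ identifies this equation with \eqref{hgf-de} for $(a/N,b/N,1)$, and, $\a_{N,\l}$ being the vanishing cycle at $\l=0$, its period is the monodromy-invariant holomorphic solution, pinned down by $\lim_{\l\to0}\int_{\a_{N,\l}}\o_{N,\l}^{a,b}=2\pi i$ to be $f_N^{a,b}$.

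I expect the $\a_{N,\l}$-column to be the main obstacle. In the direct route the delicate points are the orientation of the circle and the choice of branch of $y$ along the vanishing cycle, together with the justification of term-by-term integration for small $|\l|$ and the verification of the residue identity; in the conceptual route the work is concentrated in the Griffiths--Dwork reduction needed to confirm that the Picard--Fuchs equation is precisely the hypergeometric equation \eqref{hgf-de}. By contrast the $\b_{N,\l}$-column and the Gauss--Manin relation of the first paragraph are essentially formal once the explicit parametrization $\d_{N,\l}$ and the eigen-property of $\o_{N,\l}^{a,b}$ are in hand.
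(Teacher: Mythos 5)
Your proof is correct, and for the top row it is the paper's own argument: the $\a_{N,\l}$-period is computed for small $|\l|$ by expanding $y^b$ binomially and summing residues at $x=1$ (the paper substitutes $u=x^N$ first, but the residue identity you state is exactly what that substitution yields, sign from the negative orientation included), and the $\b_{N,\l}$-period is pulled back to $\d_{N,\l}$ via the $\chi_N^{a,b}$-eigenproperty and recognized as the Euler integral \eqref{hgf-i}. Where you genuinely diverge is the bottom row: the paper redoes both computations for $\y_{N,\l}^{a,b}$ and identifies the results as derivatives via the contiguity relation \eqref{hgf-d} together with $B(\tfrac{a}{N}+1,\tfrac{b}{N})=\tfrac{a}{a+b}B(\tfrac{a}{N},\tfrac{b}{N})$, whereas you observe once and for all that $\partial_\l\,\o_{N,\l}^{a,b}=\y_{N,\l}^{a,b}$ in the $x$-chart --- which checks out, since $\partial_\l(y^N)=-y^N(1-y^N)/\l$ on the curve --- and then differentiate under the integral sign, legitimate because both cycles admit $\l$-independent $x$-parametrizations near the respective base points. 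This is arguably cleaner: it explains why $\y_{N,\l}^{a,b}$ was normalized as it is and removes any appeal to \eqref{hgf-d}. If you write it up, make explicit the uniform integrability at $t=0,1$ justifying the interchange of $\partial_\l$ and $\int_0^1$ (the factors $t^{a/N-1}(1-t)^{b/N-1}$ are integrable and unaffected by $\partial_\l$), and the remark that the resulting identity of germs propagates to all of $L$ by analytic continuation along a common path; your sketched Picard--Fuchs alternative for the $\a$-column is not needed once the residue computation is done.
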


\begin{proof}
To prove the equality of the first columns, we can assume that $|\l|$ is sufficiently small. 
Since $y^{-N}=1-\frac{\l x^N}{x^N-1}$, we have by the binomial theorem 
\begin{align*}
y^b=\sum_{n=0}^\infty \frac{(\frac{b}{N})_n}{(1)_n} \left(\frac{\l x^N}{x^N-1}\right)^n .
\end{align*}
Hence, letting $u=x^N$, 
\begin{align*}
\int_{\a_{N,\l}} \o^{a,b}_{N,\l} 
= \sum_{n=0}^\infty \frac{(\frac{b}{N})_n}{(1)_n} \l^n \oint \frac{u^{\frac{a}{N}+n-1}}{(u-1)^{n+1}}\, du,
\end{align*}
where the integral is taken along a positively oriented small circle around $u=1$. 
Since 
\[u^{\frac{a}{N}+n-1}=\sum_{i=0}^\infty \binom{\frac{a}{N}+n-1}{i}(u-1)^i,\] 
and $\binom{\frac{a}{N}+n-1}{n}=(\frac{a}{N})_n/(1)_n$,  
the formula for $\int_{\a_{N,\l}} \o^{a,b}_{N,\l}$ follows by the residue theorem. 
The formula for $\int_{\a_{N,\l}} \y^{a,b}_{N,\l}$ follows similarly using \eqref{hgf-d}. 

To prove the equality of the second columns, we can assume that $|1-\l|$ is sufficiently small. 
Then we have 
\begin{align*}
\int_{\b_{N,\l}} \o^{a,b}_{N,\l}
& =\int_{\d_{N,\l}} ((1-\x_N)(1-\y_N))^*\o^{a,b}_{N,\l}
=(1-\z_N^a)(1-\z_N^b)\int_{\d_{N,\l}} \o^{a,b}_{N,\l}\\
&=(1-\z_N^a)(1-\z_N^b)\int_0^1 (1-(1-\l)t)^{-\frac{b}{N}} t^\frac{a}{N} (1-t)^\frac{b}{N} \frac{dt}{t(1-t)}.
\end{align*}
Hence the formula for $\int_{\b_{N,\l}} \o^{a,b}_{N,\l}$
follows by \eqref{hgf-i}. 
The formula for $\int_{\b_{N,\l}} \y^{a,b}_{N,\l}$ follows similarly using \eqref{hgf-d} and $B(\frac{a}{N}+1,\frac{b}{N})=\frac{a}{a+b}B(\frac{a}{N},\frac{b}{N})$. 
\end{proof}

\begin{rmk}
In \cite{asakura}, a higher dimensional analogue of $X_{N,\l}$ is considered and the periods along an analogue of   $\a_{N,\l}$ are computed. 
\end{rmk}

\begin{cor}\label{c2.9}
Let $\iota$ be an involution of $X_{N,\l}$ defined by $\iota(x,y)=(y,x)$. 
Then, $\iota_*$ acts on $H_1(X_{N,\l},\Z)$ as multiplication by $-1$. 
\end{cor}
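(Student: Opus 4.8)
The plan is to recover the class $\iota_*\gamma$ from its periods. For every $\gamma\in H_1(X_{N,\l},\Z)$ and $\eta\in H^1(X_{N,\l},\C)$ one has $\int_{\iota_*\gamma}\eta=\int_\gamma\iota^*\eta$, and by Proposition \ref{p2.2} together with the non-degeneracy of the period pairing a homology class is determined by its integrals against the basis $\{\o_{N,\l}^{a,b},\y_{N,\l}^{a,b}\}$. So it suffices to understand $\iota^*$ on these forms and then evaluate on $\a_{N,\l}$ and $\b_{N,\l}$.

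First I would compute $\iota^*$ on the cohomology basis. Since $\iota^*x=y$ and $\iota^*y=x$, substituting into the two presentations of $\o_{N,\l}^{a,b}$ gives $\iota^*\o_{N,\l}^{a,b}=-\o_{N,\l}^{b,a}$, the sign coming from the relative sign between the $dx/x$ and $dy/y$ expressions. For the second-kind forms I would use that $\iota$ is an automorphism of the curve, hence $\iota^*$ preserves the Hodge decomposition and carries the $\chi_N^{a,b}$-eigenspace to the $\chi_N^{b,a}$-eigenspace; thus $\iota^*\y_{N,\l}^{a,b}\in H^{0,1}(X_{N,\l})(\chi_N^{b,a})=\C\,\y_{N,\l}^{b,a}$, and matching a single period (or a brief residue computation as in Lemma \ref{l2.1}) fixes the scalar to give $\iota^*\y_{N,\l}^{a,b}=-\y_{N,\l}^{b,a}$. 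In short, $\iota^*$ sends each basis form indexed by $(a,b)$ to the negative of the one indexed by $(b,a)$.

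Next I would combine this with the symmetry of the hypergeometric data. Because $F(a,b;c;\l)=F(b,a;c;\l)$ and the beta function is symmetric, the period matrix of Theorem \ref{t2.2} is invariant under $a\leftrightarrow b$, i.e. $f_N^{a,b}=f_N^{b,a}$ and $g_N^{a,b}=g_N^{b,a}$. Hence $\int_{\iota_*\a_{N,\l}}\o_{N,\l}^{a,b}=\int_{\a_{N,\l}}\iota^*\o_{N,\l}^{a,b}=-\int_{\a_{N,\l}}\o_{N,\l}^{b,a}=-\int_{\a_{N,\l}}\o_{N,\l}^{a,b}$, and likewise for $\y_{N,\l}^{a,b}$ and with $\b_{N,\l}$ in place of $\a_{N,\l}$. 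By non-degeneracy this yields $\iota_*\a_{N,\l}=-\a_{N,\l}$ and $\iota_*\b_{N,\l}=-\b_{N,\l}$.

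The step I expect to be the main obstacle is the passage from these two generators to all of $H_1(X_{N,\l},\Z)$. By Theorem \ref{t2.1} the group is generated by $\a_{N,\l},\b_{N,\l}$ over $\Z[\D_N]/I_N$, but $\iota_*$ is not $\Z[\D_N]$-linear: functoriality of the push-forward together with $\iota g\iota^{-1}=(\z',\z)$ for $g=(\z,\z')$ forces $\iota_*\circ g_*=(\iota g\iota^{-1})_*\circ\iota_*$, so $\iota_*$ intertwines the $\D_N$-action with the interchange of the two $\m_N$-factors. Reconciling this twist with a genuine scalar action is the delicate point, and it is exactly here that the behaviour of $\Z[\D_N]/I_N$ under the factor-swap must be used; the computation on the generators above is the heart of the matter, and the remaining work is to propagate it correctly through the module structure.
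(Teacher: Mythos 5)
Your first three paragraphs follow the paper's own route: reduce everything to the action of $\iota^*$ on the cohomology basis, invoke the symmetry $f_N^{a,b}=f_N^{b,a}$, $g_N^{a,b}=g_N^{b,a}$ of the period matrix of Theorem \ref{t2.2}, and conclude $\iota_*\a_{N,\l}=-\a_{N,\l}$, $\iota_*\b_{N,\l}=-\b_{N,\l}$. The only divergence is how you pin down $\iota^*\y_{N,\l}^{a,b}$: the paper simply exhibits $\iota^*\y_{N,\l}^{a,b}+\y_{N,\l}^{b,a}=d(x^by^a/\l)$ as an exact form, while you reduce to a scalar via the eigenspace decomposition. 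That reduction is fine, but ``matching a single period'' is circular (you would need to already know $\iota_*$ on some cycle), so you must take the cup-product route: $\deg(\o_{N,\l}^{N-b,N-a}\cup\iota^*\y_{N,\l}^{a,b})=\deg(\iota^*\o_{N,\l}^{N-b,N-a}\cup\y_{N,\l}^{a,b})=-\deg(\o_{N,\l}^{N-a,N-b}\cup\y_{N,\l}^{a,b})$, which fixes the scalar to $-1$ by Lemma \ref{l2.1}(iii).

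The ``main obstacle'' you flag in your last paragraph is not something you failed to finish; it is a genuine failure of the literal statement, and no amount of propagation through the module structure will close it, because the extension from the generators to all of $H_1(X_{N,\l},\Z)$ is \emph{false} for $N\ge 3$. Concretely, $\iota_*\x_{N*}\a_{N,\l}=\y_{N*}\iota_*\a_{N,\l}=-\y_{N*}\a_{N,\l}$, and $\x_{N*}\a_{N,\l}\ne\y_{N*}\a_{N,\l}$ since their periods against $\o_{N,\l}^{a,b}$ are $\z_N^af_N^{a,b}$ and $\z_N^bf_N^{a,b}$, which differ for $a\ne b$. Equivalently, $\iota_*=-\mathrm{id}$ on all of $H_1$ would force $\iota^*=-\mathrm{id}$ on $H^1(X_{N,\l},\C)$ by adjunction, contradicting $\iota^*\o_{N,\l}^{a,b}=-\o_{N,\l}^{b,a}$ and Proposition \ref{p2.2}. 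The paper's own proof also stops exactly at the generators: what it actually establishes, and what is used later (in the proof of Proposition \ref{p3.9}(i)), is that $\iota_*$ sends the $\L$-basis $\{\a_{N,\l},\b_{N,\l}\}$ of Theorem \ref{t2.1} to its negative --- equivalently, that $\iota_*$ is $(-1)$ times the semilinear extension over the swap $\x_N\leftrightarrow\y_N$ of $\Z[\D_N]/I_N$. So the fix is to restate the conclusion at the level of the generators (or as the semilinear statement just given), not to supply a further argument.
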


\begin{proof}
Since $f_N^{a,b}=f_N^{b,a}$ and  $g_N^{a,b}=g_N^{b,a}$, it suffices by 
Proposition \ref{p2.2} and Theorem \ref{t2.2} to show that $\iota^*\o_{N,\l}^{a,b}=-\o_{N,\l}^{b,a}$ and $\iota^*\y_{N,\l}^{a,b}=-\y_{N,\l}^{b,a}$ in cohomology.  
The former is evident, and since
\[\iota^*\y_{N,\l}^{a,b}+\y_{N,\l}^{b,a}=\frac{b}{\l} x^by^a\frac{dx}{x}+ \frac{a}{\l} x^by^a \frac{dy}{y}=d\left(\frac{x^by^a}{\l}\right)\]
as $1$-forms, the latter follows. 
\end{proof}

\subsection{Local monodromy}\label{s2.6}

For $s\in\{0,1,\infty\}$, 
let $\mathscr{D}_s\subset \P^1$ be a small open disk centered at $s$ and put $\mathscr{D}_s^*=\mathscr{D}_s-\{s\}$. 
Take a base point $\l \in \mathscr{D}^*$ and let
\[T_s\colon H_1(X_{N,\l},\Z) \to H_1(X_{N,\l},\Z)\] 
be the local monodromy operator at $s$. 
It commutes with the $\D_N$-action.

\begin{ppn}\label{p2.7}\ 
\begin{enumerate}
\item When $s=0$, we have 
\[(1-T_0)(\a_{N,\l})=0, \quad  (1-T_0)(\b_{N,\l})=(1-\x_N)(1-\y_N) \a_{N,\l}.\]
\item  When $s=1$, we have 
\[(1-T_1)(\b_{N,\l})=0, \quad (1-T_1)(\a_{N,\l}) = -\x_N^{-1}\y_N^{-1} \g_{N,\l},\]
where
\[\g_{N,\l}:=(1-\x_N\y_N)\a_{N,\l}+\b_{N,\l}.\] 
\item When $s=\infty$, we have 
\begin{align*}
&(1-T_\infty)(\a_{N,\l})=(2-\x_N-\y_N) a_{N,\l}+\b_{N,\l}, 
\\&(1-T_\infty)(\b_{N,\l})=-(1-\x_N)(1-\y_N)\a_{N,\l}.
\end{align*}
\end{enumerate}
\end{ppn}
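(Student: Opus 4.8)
The plan is to read off all three monodromies from the period computation of Theorem \ref{t2.2}, using that each $T_s$ is controlled by the classical local behaviour of the hypergeometric solutions, and to extract the one genuinely transcendental constant by a geometric (Picard--Lefschetz) argument. Since $T_s$ commutes with the $\D_N$-action and preserves the integral lattice, it is a $\Z[\D_N]/I_N$-linear automorphism of $H_1(X_{N,\l},\Z)$; by Theorem \ref{t2.1} it is represented in the basis $\{\a_{N,\l},\b_{N,\l}\}$ by a matrix $M_s\in\GL_2(\Z[\D_N]/I_N)$. As $\Z[\D_N]/I_N$ embeds into $\prod_{1\le a,b\le N-1}\C$ via the characters $\chi_N^{a,b}$, it suffices to compute the image $M_s^{a,b}\in\GL_2(\C)$ on each $\chi_N^{a,b}$-eigencomponent, i.e.\ under $\x_N\mapsto\z_N^a$, $\y_N\mapsto\z_N^b$. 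There Theorem \ref{t2.2} identifies the period matrix of $(\a_{N,\l},\b_{N,\l})$ against $(\o^{a,b}_{N,\l},\y^{a,b}_{N,\l})$ with the fundamental matrix $\Phi=\bmat{f & c\,g \\ f' & c\,g'}$ of \eqref{hgf-de} for parameters $(a/N,b/N,1)$, where $f=f_N^{a,b}$, $g=g_N^{a,b}$, $c=(1-\z_N^a)(1-\z_N^b)$ and $'=d/d\l$. Since $\o^{a,b}_{N,\l},\y^{a,b}_{N,\l}$ are rational in $\l$, hence single-valued in $\l$, the multivaluedness of the periods comes solely from the cycles, so $M_s^{a,b}=\Phi^{-1}\Phi^{\mathrm{cont}}$.

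Next I would record the eigenvalues and the invariant directions. From the Riemann scheme of \eqref{hgf-de} — exponents $(0,0)$ at $0$, $(0,1-\tfrac aN-\tfrac bN)$ at $1$, $(\tfrac aN,\tfrac bN)$ at $\infty$ — one sees that $M_0^{a,b}$ is unipotent, $M_1^{a,b}$ has eigenvalues $1,\z_N^{-a-b}$, and $M_\infty^{a,b}$ has eigenvalues $\z_N^a,\z_N^b$; in particular each $1-M_s^{a,b}$ has rank $1$. The two invariant directions are immediate: $f$ is holomorphic at $\l=0$, so the $\a$-column of $\Phi$ is single-valued around $0$ and $(1-T_0)\a_{N,\l}=0$; likewise $g$ is holomorphic at $\l=1$, so the $\b$-column is single-valued around $1$ and $(1-T_1)\b_{N,\l}=0$. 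This fixes the kernel rows of (i) and (ii).

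The only genuinely transcendental input is the single remaining entry of $T_0$, the coefficient in $(1-T_0)\b_{N,\l}=m\,\a_{N,\l}$. I would compute $m$ geometrically by Picard--Lefschetz at $\l=0$: as $\l\to0$ the curve degenerates to $\{x^N=1\}\cup\{y^N=1\}$ with $N^2$ nodes at $(\z_N^i,\z_N^j)$, and $\x_N^i\y_N^j\a_{N,\l}$ is the vanishing cycle at $(\z_N^i,\z_N^j)$, so $T_0(\gamma)=\gamma-\sum_{i,j}\bigl(\gamma\cdot\x_N^i\y_N^j\a_{N,\l}\bigr)\,\x_N^i\y_N^j\a_{N,\l}$. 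Tracking the four translates of $\d_{N,\l}$ constituting $\b_{N,\l}=((1-\x_N)(1-\y_N))_*\d_{N,\l}$ through the nodes they pass, one finds that $\b_{N,\l}$ meets exactly the orbit members indexed by $(0,0),(1,0),(0,1),(1,1)$, with alternating signs, and these assemble to $(1-T_0)\b_{N,\l}=(1-\x_N)(1-\y_N)\a_{N,\l}$, completing (i). The main obstacle is precisely this intersection bookkeeping: one must account for all four contributions with their correct signs (the off-diagonal terms at $(1,0)$ and $(0,1)$ are easy to miss). Alternatively $m$ may be obtained analytically as the connection coefficient of the logarithmic solution of \eqref{hgf-de} at $\l=0$ in the expansion of $g_N^{a,b}$, which yields the same answer.

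Finally, rather than compute any further connection constant, I would fix the rest by the relation $T_\infty T_1 T_0=\mathrm{id}$ coming from the product of the three generating loops of $\pi_1(L)$. On a fixed eigencomponent, (i) and the previous step give $M_0^{a,b}=\bmat{1 & -c \\ 0 & 1}$, while the kernel of (ii) and the eigenvalue $\z_N^{-a-b}$ give $M_1^{a,b}=\bmat{\z_N^{-a-b} & 0 \\ p & 1}$ with a single unknown $p$. Imposing that $M_\infty^{a,b}=(M_1^{a,b}M_0^{a,b})^{-1}$ have eigenvalues $\z_N^a,\z_N^b$ — equivalently trace $\z_N^a+\z_N^b$ and determinant $\z_N^{a+b}$ — forces $p\,c=(1-\z_N^{-a})(1-\z_N^{-b})$, whence $p=\z_N^{-a-b}$. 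Reassembling over all eigencomponents gives $(1-T_1)\a_{N,\l}=-\x_N^{-1}\y_N^{-1}\g_{N,\l}$ with $\g_{N,\l}=(1-\x_N\y_N)\a_{N,\l}+\b_{N,\l}$, which is (ii), and expanding $T_\infty=(T_1T_0)^{-1}$ yields (iii) directly. The one point requiring care here is to pin down the ordering and orientation of the loops so that the relation reads $T_\infty T_1 T_0=\mathrm{id}$ rather than its reverse.
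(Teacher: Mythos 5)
Your argument is correct, but it departs from the paper's proof in two places, both worth comparing. The paper obtains every nontrivial entry analytically from classical connection formulas: the coefficient in $(1-T_0)\b_{N,\l}$ comes from the logarithmic expansion $g_N^{a,b}=h_N^{a,b}-\frac{1}{2\pi i}\log\l\cdot f_N^{a,b}$ at $\l=0$ (your ``alternative'' analytic route), and $(1-T_1)\a_{N,\l}$ comes from a second connection formula at $\l=1$, which forces a separate treatment of the logarithmic case $a+b=N$; part (iii) is then deduced from $T_\infty=(T_1T_0)^{-1}$ exactly as you do. Your replacement of that second connection formula by the constraint that $M_\infty^{a,b}=(M_1^{a,b}M_0^{a,b})^{-1}$ have trace $\z_N^a+\z_N^b$ and determinant $\z_N^{a+b}$ is a genuine simplification: one checks $\mathrm{tr}(M_1^{a,b}M_0^{a,b})=\z_N^{-a}+\z_N^{-b}$ gives $pc=(1-\z_N^{-a})(1-\z_N^{-b})$, i.e.\ $p=\z_N^{-a-b}$, in agreement with the paper, and it handles $a+b\equiv 0$ uniformly with no case split. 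The one point you must actually nail down is the one you flag: with the opposite orientation convention the relevant eigenvalues become $\z_N^{a+b}$ for $T_1$ and $\{\z_N^{-a},\z_N^{-b}\}$ for $T_\infty$, and the identical trace computation then yields $p=1$, a different (and wrong) answer. The convention is not free --- it is already fixed by whichever computation you use for $(1-T_0)\b_{N,\l}$ (e.g.\ $(1-T_0)g_N^{a,b}=+f_N^{a,b}$ pins the loop at $0$ as positively oriented), and the loops at $1$ and $\infty$ must then be oriented compatibly; once that is said explicitly, everything is forced.

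Two smaller remarks. First, your Picard--Lefschetz bookkeeping for $(1-T_0)\b_{N,\l}$ --- that $\b_{N,\l}$ meets the four vanishing cycles indexed by $(0,0),(1,0),(0,1),(1,1)$ with alternating signs --- is right, and it is the only intersection table consistent with the answer $(1-\x_N)(1-\y_N)\a_{N,\l}$; note that it disagrees with the numbers recorded in the paper's proof of Theorem \ref{t2.1}, where the entries at $(i,j)=(1,0)$ and $(0,1)$ are asserted to vanish (a discrepancy that does not affect the unimodularity used there, but does affect your argument, so you should justify your table rather than cite theirs). Second, your parenthetical ``each $1-M_s^{a,b}$ has rank $1$'' fails for $s=\infty$, where $1-T_\infty$ is injective (Corollary \ref{t2.3} (iii)); you do not use it, so simply delete it.
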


\begin{proof}
It suffices to compare the periods using Theorem \ref{t2.2}, and we are reduced to the monodromy of hypergeometric functions. 
Since $T_s$ and $\frac{d}{d\l}$ commutes, it suffices to compare the periods of $\o_N^{a,b}$. 

Since its period along $\a_{N,\l}$ (resp $\b_{N,\l}$) is given by the functions $f_N^{a,b}$ (resp. $g_N^{a,b}$) which are holomorphic at $\l=0$ (resp. at $\l=1$), the first formula of (i) (resp. of (ii)) follows. 

To prove the second formula of (i), put 
\begin{align*}
& h_N^{a,b}(\l)= \frac{2\pi i}{B(\frac{a}{N},\frac{b}{N})}\sum_{n=0}^\infty \frac{(\frac{a}{N})_n(\frac{b}{N})_n}{(1)_n(1)_n} k_n \l^n, \label{e2.5}
\\& k_n:=2\psi(1+n)-\psi\left(\frac{a}{N}+n\right)-\psi\left(\frac{b}{N}+n\right),
\end{align*}
where $\psi(s)=\G'(s)/\G(s)$ is the digamma function. 
Then we have by \cite[2.3. (2)]{erdelyi} 
\[g_N^{a,b}(\l)=h_N^{a,b}(\l) - \frac{1}{2\pi i}\log\l \cdot f_N^{a,b}(\l).\]
Since $h_N^{a,b}$ is holomorphic at $\l=0$, 
\[(1-T_0)g_N^{a,b}(\l)= f_N^{a,b}(\l).\]
Noting 
\[\int_{(1-\x_N)(1-\y_N)\a_{N,\l}} \o_{N,\l}^{a,b}=(1-\x_N^a)(1-\y_N^b)\int_{\a_{N,\l}} \o_{N,\l}^{a,b},\]
we obtain the second formula of (i). 

To prove the second formula of (ii), suppose first that $a+b \ne N$. Then we have by \cite[2.9. (33)]{erdelyi}
\[f_N^{a,b}=\frac{(1-\z_N^{-a})(1-\z_N^{-b})}{1-\z_N^{-a-b}} g_N^{a,b} + k_N^{a,b}\] 
for a function $k_N^{a,b}$ of local exponent $1-\frac{a}{N}-\frac{b}{N}$ at $\l=1$. 
In fact, 
\begin{align*}
k_N^{a,b}(\l)=&2\pi i \frac{\G(\frac{a}{N}+\frac{b}{N}-1)}{\G(\frac{a}{N})\G(\frac{b}{N})} \times \\&
\quad (1-\l)^{1-\frac{a}{N}-\frac{b}{N}}F\left(1-\frac{a}{N},1-\frac{b}{N};2-\frac{a}{N}-\frac{b}{N};1-\l\right).
\end{align*}
It follows that
\begin{equation}\label{e2.1}
(1-T_1)f_N^{a,b}=-\z_N^{-a-b}\left((1-\z_N^{a+b}) f_N^{a,b}+(1-\z_N^a)(1-\z_N^b) g_N^{a,b}\right).
\end{equation}
Secondly, suppose that $a+b=N$. Then we have by \cite[2.3. (2)]{erdelyi} 
\[f_N^{a,b}(\l)=h_N^{a,b}(1-\l)+\frac{1}{2\pi i} (1-\z_N^{-a})(1-\z_N^{-b})\log(1-\l)g_N^{a,b}(\l),\]
where $h_N^{a,b}$ is as above. 
It follows that 
\[(1-T_1)f_N^{a,b}=-(1-\z_N^{-a})(1-\z_N^{-b}) g_N^{a,b}.\]
Therefore \eqref{e2.1} holds for any $a$ and $b$, and the second formula of (ii) follows. 

Finally, (iii) follows easily from (i) and (ii) using $T_\infty=(T_1T_0)^{-1}$. 
\end{proof}

\begin{rmk}
From Proposition \ref{p2.7}, one verifies the 
unipotency $(1-T_0)^2=0$, the quasi-unipotency $(1-T_1^N)^2=0$, and the finiteness $T_\infty^N=1$. 
\end{rmk}

\begin{cor}\label{t2.3}\ 
\begin{enumerate}
\item
The groups $\Ker(1-T_0)$ and $\Im(1-T_0)$ are free cyclic $\Z[\D_N]/I_N$-modules generated respectively by 
$\a_{N,\l}$ and $(1-\x_N)(1-\y_N)\a_{N,\l}$. 
\item
The groups $\Ker(1-T_1)$, $\Im(1-T_1)$ and $\Coker(1-T_1)$ are free cyclic $\Z[\D_N]/I_N$-modules generated respectively by 
$\b_{N,\l}$, $\g_{N,\l}$ and the class of $\a_{N,\l}$. 
\item The map $1-T_\infty$ is an injection with finite cokernel. 
\end{enumerate}
\end{cor}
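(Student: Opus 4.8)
The plan is to work entirely within the free $\Z[\D_N]/I_N$-module structure of $H_1(X_{N,\l},\Z)$ provided by Theorem \ref{t2.1}, writing $R=\Z[\D_N]/I_N$ and $H=R\a_{N,\l}\op R\b_{N,\l}$. Each operator $1-T_s$ is $R$-linear since it commutes with the $\D_N$-action, and Proposition \ref{p2.7} records its values on the two generators, so all three parts reduce to linear algebra over $R$. Everything will hinge on a single algebraic fact, which I would isolate first: the element $(1-\x_N)(1-\y_N)$ is a non-zero-divisor in $R$ and becomes a unit in $R_\Q:=R\ot\Q$.

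To prove this fact, observe that $R=(\Z[\x_N]/\Phi)\ot_\Z(\Z[\y_N]/\Phi)$ with $\Phi=1+\x_N+\cdots+\x_N^{N-1}=\prod_{1<d\mid N}\Phi_d$, so $R_\Q$ is a tensor product of \'etale $\Q$-algebras $\prod_{1<d\mid N}\Q(\z_d)$; in each cyclotomic factor $\x_N$ specializes to a primitive $d$th root of unity with $d>1$, whence $1-\x_N$ is invertible. Thus $1-\x_N$ and $1-\y_N$ are units in $R_\Q$, and so is their product. Since $R$ is a free $\Z$-module, multiplication by an element of $R$ is injective if and only if it is so after $\ot\Q$; as $(1-\x_N)(1-\y_N)$ is a unit in $R_\Q$, it is a non-zero-divisor in $R$.

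For (i) and (ii) I would read off the maps from Proposition \ref{p2.7} and apply this fact directly. In case $s=0$ the operator sends $p\a_{N,\l}+q\b_{N,\l}\mapsto q(1-\x_N)(1-\y_N)\a_{N,\l}$; since $\a_{N,\l}$ is a free generator and $(1-\x_N)(1-\y_N)$ is a non-zero-divisor, the kernel is exactly $R\a_{N,\l}$ and the image is $R(1-\x_N)(1-\y_N)\a_{N,\l}$, both free cyclic. In case $s=1$ the image is generated by $\x_N^{-1}\y_N^{-1}\g_{N,\l}$, hence equals $R\g_{N,\l}$ because $\x_N^{-1}\y_N^{-1}$ is a unit; the key observation is that $\{\a_{N,\l},\g_{N,\l}\}$ is again an $R$-basis of $H$, the change-of-basis matrix $\begin{bmatrix}1&1-\x_N\y_N\\0&1\end{bmatrix}$ being unimodular. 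In this basis the kernel is $R\b_{N,\l}$, the image $R\g_{N,\l}$ is free cyclic on a basis vector, and the cokernel $H/R\g_{N,\l}\cong R\a_{N,\l}$ is free cyclic on the class of $\a_{N,\l}$.

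For (iii) I would compute the matrix of $1-T_\infty$ in the basis $\{\a_{N,\l},\b_{N,\l}\}$, namely $M=\begin{bmatrix}2-\x_N-\y_N&-(1-\x_N)(1-\y_N)\\1&0\end{bmatrix}$, whose determinant is $(1-\x_N)(1-\y_N)$. Injectivity then follows from the adjugate identity $\mathrm{adj}(M)\,M=\det(M)\cdot\mathrm{id}$ together with $\det M$ being a non-zero-divisor, while finiteness of the cokernel follows because $\det M$ is a unit in $R_\Q$, so $M\ot\Q$ is bijective and $\Coker(1-T_\infty)$ is a finitely generated torsion abelian group. The only real content is the algebraic fact about $(1-\x_N)(1-\y_N)$; once that is in hand the three statements are routine, and the main thing to get right is the bookkeeping of bases, especially the passage to $\{\a_{N,\l},\g_{N,\l}\}$ in (ii).
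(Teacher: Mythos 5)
Your proof is correct and follows exactly the route the paper intends: the paper's own proof of this corollary is the single sentence ``This is immediate from Theorem \ref{t2.1} and Proposition \ref{p2.7},'' and you have simply supplied the linear algebra over $R=\Z[\D_N]/I_N$ that this conceals, including the fact that $(1-\x_N)(1-\y_N)$ is a non-zero-divisor in $R$ and a unit in $R\ot\Q$ (a fact the paper itself invokes without proof in Theorem \ref{t2.5}(iii)). All computations (the matrices of $1-T_s$, the unimodular change of basis to $\{\a_{N,\l},\g_{N,\l}\}$, and $\det(1-T_\infty)=(1-\x_N)(1-\y_N)$) check out.
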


\begin{proof}This is immediate from Theorem \ref{t2.1} and Proposition \ref{p2.7}. 
\end{proof}

\subsection{Specializations}\label{s2.7}

To consider the specializations at $s \in \{0,1,\infty\}$, we extend the morphism $f_N$ to a proper flat morphism $\wt f_N\colon \wt{X}_N \to \P^1$, 
where the surface $\wt{X}_N$ is regular and any fiber $X_{N,s}:=\wt{f}_N^{-1}(s)$, $s \in\{0,1,\infty\}$ is a divisor with normal crossings.  
Let $f_N'\colon X'_N \to \P^1$ be defined by the same equation \eqref{e1.1} as $f_N$. Its fiber over $\l=1$ has a unique singularity $(x,y)=(\infty,\infty)$ unless $N=1$. 
Blowing up the surface $X'_N$ at this point, we obtain $\wt{X}_N$, to which the morphism $f_N$ and the $\D_N$-action extend. 
This construction is compatible for all $N$, i.e. the morphism $\pi_{M,N}$ (see Section \ref{s2.1}) extends to a morphism $\wt{X}_M\to \wt{X}_N$ over $\P^1$. 

The three special fibers are described as follows.

\begin{itemize}
\item
The fiber $X_{N,0}$ is a union of $2N$ rational curves 
\[L_i : x=\z_N^i, \quad L'_j : y=\z_N^j \quad (i,j\in\Z/N\Z),\] 
and $L_i$ and $L_j'$ intersect each other transversally at $P_{i,j}:=(\z_N^i,\z_N^j)$.  
\item 
The fiber
 $X_{N,1}$ is a union of the Fermat curve $C_N$ defined by 
\[x_0^N+y_0^N=z_0^N\]
and a rational curve $E$ with multiplicity $N$, intersecting each other transversally at the $N$ points 
\[Q_k:=(\z_N^k\z_{2N}:1:0)\quad (k\in\Z/N\Z).\] 
The action of $\D_N$ respects $C_N$ and $E$, and it acts on $C_N \cap E$ through the homomorphism 
$\chi_N^{1,-1}\colon \D_N \to \mu_N$ (see Section \ref{s2.1}). 
\item 
The fiber $X_{N,\infty}$ is a union of two rational curves both with multiplicity $N$, intersecting each other transversally at one point. 
\end{itemize}
By the  description of $X_{N,\infty}$ as above, we have 
\[H_1(X_{N,\infty},\Z)=0\] 
(this also follows from Corollary \ref{t2.3} (iii) and Lemma \ref{l2.12} below).  
The structure of $H_1(X_{N,s},\Z)$ for $s=0, 1$ will be given below. 

For $s\in\{0,1,\infty\}$, let $\sD_s$, $\sD_s^*$ and $\l \in \sD_s^*$ be as in Section \ref{s2.6}, and put  $\sX_{N,s}=\wt{f}_N^{-1}(\sD_s)$. 
Then the specialization map is defined by the composite 
\begin{equation*}\label{sp-betti}
\sp_s\colon H_1(X_{N,\l},\Z) \to H_1(\sX_{N,s},\Z) \os\simeq\lra H_1(X_{N,s},\Z),
\end{equation*}
where the second map is the inverse of the push-forward.  

\begin{lem}\label{l2.12}
The map $\sp_s$ factors through $\Coker(1-T_s)$ and is surjective. 
In particular, it is a homomorphism of $\Z[\D_N]/I_N$-modules.  
\end{lem}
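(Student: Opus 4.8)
The plan is to express $\sp_s$ as a composite of topological maps and to read off both assertions from the Wang sequence of the punctured family together with Lefschetz duality on the total space. Write $\sX_{N,s}^*:=\wt f_N^{-1}(\sD_s^*)$ and factor the fibre inclusion $j\colon X_{N,\l}\inj\sX_{N,s}$ as $j=q\circ i$ with $i\colon X_{N,\l}\inj\sX_{N,s}^*$ and $q\colon\sX_{N,s}^*\inj\sX_{N,s}$. Since $\sX_{N,s}^*\to\sD_s^*$ is a locally trivial fibration over a circle with fibre $X_{N,\l}$ and geometric monodromy inducing $T_s$, the Wang exact sequence reads
\[ H_1(X_{N,\l},\Z)\xrightarrow{\,1-T_s\,}H_1(X_{N,\l},\Z)\xrightarrow{\,i_*\,}H_1(\sX_{N,s}^*,\Z)\to H_0(X_{N,\l},\Z)\xrightarrow{\,1-T_s\,}H_0(X_{N,\l},\Z). \]
As $X_{N,\l}$ is connected, $1-T_s$ vanishes on $H_0$, so $i_*$ induces an isomorphism of $\Coker(1-T_s)$ onto its image and $H_1(\sX_{N,s}^*,\Z)/\Im(i_*)\cong\Z$. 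In particular $i_*\circ(1-T_s)=0$, hence $j_*=q\circ i_*$ kills $\Im(1-T_s)$; composing with the retraction isomorphism $H_1(\sX_{N,s},\Z)\cong H_1(X_{N,s},\Z)$ (the total space deformation retracts onto the central fibre, which is why the defining push-forward is invertible) shows that $\sp_s$ factors through $\Coker(1-T_s)$.

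For surjectivity I would first note that $\sX_{N,s}=\wt f_N^{-1}(\sD_s)$ is a compact oriented $4$-manifold with boundary $\partial\sX_{N,s}=\wt f_N^{-1}(\partial\sD_s)$: it is a closed subset of the smooth surface $\wt X_N$, its boundary is a submersion fibre because $s$ is the only critical value of $\wt f_N$ in $\sD_s$, and the singular points of the central fibre are still smooth points of $\wt X_N$. Since $\sX_{N,s}^*$ deformation retracts onto $\partial\sX_{N,s}$, Lefschetz duality gives
\[ H_1(\sX_{N,s},\sX_{N,s}^*;\Z)\cong H_1(\sX_{N,s},\partial\sX_{N,s};\Z)\cong H^3(\sX_{N,s},\Z)\cong H^3(X_{N,s},\Z)=0, \]
the last equality holding because $X_{N,s}$ is a real surface. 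The long exact sequence of the pair then shows $q$ is surjective. It remains to check that the generator of $H_1(\sX_{N,s}^*,\Z)/\Im(i_*)\cong\Z$ maps into $\Im(j_*)$, i.e. that some loop winding once around $s$ bounds in $\sX_{N,s}$. Choosing a smooth point $P$ of the reduced central fibre (a non-nodal point on a multiplicity-one component, which exists for $s=0,1$), $\wt f_N$ is a submersion near $P$ and admits a local section over a small disk about $s$; its boundary is such a loop and the section disk bounds it, so this class dies in $H_1(\sX_{N,s},\Z)$. Therefore $\Im(j_*)=q\big(\Im(i_*)\big)=q\big(H_1(\sX_{N,s}^*,\Z)\big)=H_1(\sX_{N,s},\Z)$, and $\sp_s$ is surjective (for $s=\infty$ this is trivial since $H_1(X_{N,\infty},\Z)=0$).

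Finally, the $\D_N$-action extends to $\wt X_N$ over $\P^1$, so every space and map above is $\D_N$-equivariant and $\sp_s$ is $\Z[\D_N]$-linear; as it factors through the $\Z[\D_N]/I_N$-module $\Coker(1-T_s)$ (by Theorem \ref{t2.1}, $I_N$ annihilates $H_1(X_{N,\l},\Z)$) and is surjective, $I_N$ annihilates $H_1(X_{N,s},\Z)$ as well, giving the stated homomorphism of $\Z[\D_N]/I_N$-modules. The main obstacle is the surjectivity: the Wang sequence only produces the fibre classes $\Coker(1-T_s)$ inside $H_1(\sX_{N,s}^*,\Z)$, so one must separately rule out a contribution from the extra base-circle generator, which is exactly what the bounding-section argument achieves, and one must justify the manifold-with-boundary structure needed to invoke duality.
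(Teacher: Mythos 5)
Your proof is correct and follows essentially the same route as the paper: the Wang sequence of the punctured family gives the factorization through $\Coker(1-T_s)$, Poincar\'e--Lefschetz duality shows $H_1(\sX_{N,s},\sX_{N,s}^*;\Z)=0$ so that $H_1(\sX_{N,s}^*,\Z)\to H_1(\sX_{N,s},\Z)$ is surjective, and a section of the family over the disk kills the residual base-circle class. The only cosmetic differences are that the paper uses the explicit global section $(x,y)=(0,1)$ (which works uniformly for all three special fibres), whereas you construct a local section at a smooth multiplicity-one point and dispose of $s=\infty$ separately via $H_1(X_{N,\infty},\Z)=0$.
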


\begin{proof}
Put $\sX_{N,s}^*=f_N^{-1}(\sD_s^*)=\sX_{N,s} - X_{N,s}$. 
First, we have the localization exact sequence 
\[H_1(\sX_{N,s}^*,\Z) \to H_1(\sX_{N,s},\Z) \to H_1(\sX_{N,s},\sX_{N,s}^*;\Z) \to 0,\]
but since 
$H_1(\sX_{N,s},\sX_{N,s}^*,\Z) \simeq H^{3}(X_{N,s},\Z)=0$ 
by the Poincar\'e-Lefschetz duality, the first map is surjective. 
Secondly, we have Wang's exact sequence (cf. \cite[III, 6]{serre3})
\[H_1(X_{N,\l},\Z) \xrightarrow{1-T_s} H_1(X_{N,\l},\Z)  \to H_1(\sX_{N,s}^*,\Z)  \xrightarrow{\pd} H_0(X_{N,\l},\Z) \to 0,\]
and the factorization follows. 
This is a sequence of $\Z[\D_N]/I_N$-modules since the $\D_N$-action commutes with $T_s$ and $\D_N$ acts trivially on $H_0(X_{N,\l},\Z)\simeq \Z$. 
By the functoriality of Wang's sequence, we have the commutative diagram
\[\xymatrix{
H_1(\sX_{N,s}^*,\Z)  \ar[r]^\pd  \ar[d]& H_0(X_{N,\l},\Z) \ar[d]_\simeq \\ 
H_1(\sD_s^*,\Z) \ar[r]^\simeq & H_0(\{\l\},\Z)
}
\]
where we used $H_1(\{\l\},\Z)=0$ for the horizontal isomorphism. 
If $i\colon \sD_s \to \sX_{N,s}$ is a section, given for example by $(x,y)=(0,1)$, then the composition 
\[H_1(\sD_s^*,\Z) \xrightarrow{i_*} H_1(\sX_{N,s}^*,\Z) \xrightarrow{\pd} H_0(X_{N,\l},\Z)\]
is an isomorphism, and hence $H_1(\sX_{N,s}^*,\Z)$ is generated by $\Coker(1-T_s)$ and $\Im(i_*)$. 
Since $i(\sD_s)$ is contractible, $\Im(i_*)$ maps trivially to $H_1(\sX_{N,s},\Z)$ and we obtain the desired surjectivity. 
\end{proof}

\begin{thm}\label{t2.5} \ 
\begin{enumerate}
\item There is an isomorphism of $\Z[\D_N]/I_N$-modules 
\[H_1(X_{N,0},\Z) \simeq \Z[\D_N](1-\x_N)(1-\y_N) \subset \Z[\D_N], \]
\item The group $H_1(X_{N,0},\Z)$ is a free cyclic $\Z[\D_N]/I_N$-module generated by $\sp_0(\b_{N,\l})$. 
\item The group $\Ker(\sp_0)$ is a free cyclic $\Z[\D_N]/I_N$-module generated by $\a_{N,\l}$. 
\end{enumerate}
\end{thm}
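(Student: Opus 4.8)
The plan is to compute $H_1(X_{N,0},\Z)$ directly from the normal crossing description of the fiber, and then to pin down the generator using the specialization map and Wang's sequence. Since $X_{N,0}$ is a connected union of the $2N$ rational curves $L_i$, $L_j'$, its first homology is that of the dual graph $\Gamma$, namely the complete bipartite graph $K_{N,N}$ with vertices the $L_i$ and $L_j'$ and with $N^2$ edges given by the nodes $P_{i,j}$. Thus $H_1(X_{N,0},\Z)\cong\ker(\pd\colon C_1\to C_0)$ with $C_1=\Z[\{P_{i,j}\}]$ and $C_0=\Z[\{L_i\}]\op\Z[\{L_j'\}]$. Now $\D_N$ acts simply transitively on the nodes, so $C_1\cong\Z[\D_N]$; and since $\y_N$ (resp.\ $\x_N$) acts trivially on $\{L_i\}$ (resp.\ $\{L_j'\}$), one has $\Z[\{L_i\}]\cong\Z[\D_N]/(1-\y_N)$ and $\Z[\{L_j'\}]\cong\Z[\D_N]/(1-\x_N)$. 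Up to sign, $\pd$ sends $u\in\Z[\D_N]$ to the pair of its reductions modulo $(1-\y_N)$ and modulo $(1-\x_N)$, whence $H_1(X_{N,0},\Z)\cong(1-\x_N)\Z[\D_N]\cap(1-\y_N)\Z[\D_N]$.

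\emph{Ring-theoretic step.} The key algebraic point is that this intersection coincides with the product ideal $\Z[\D_N](1-\x_N)(1-\y_N)$, which then gives (i). Writing $\Z[\D_N]=A\ot_\Z A$ with $A=\Z[\mu_N]$ and using the splitting $A=\Z\cdot 1\op A_0$, $A_0:=(1-\z_N)A$, the two ideals become $A_0\ot A$ and $A\ot A_0$, whose intersection is plainly $A_0\ot A_0=\Z[\D_N](1-\x_N)(1-\y_N)$. Moreover $A_0\cong A/(\textstyle\sum_i\z_N^i)$ via multiplication by $1-\z_N$, so $A_0\ot A_0\cong\Z[\D_N]/I_N$, i.e.\ the ideal is free cyclic of rank one over $\Z[\D_N]/I_N$.

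\emph{The generator (parts (ii) and (iii)).} By Lemma \ref{l2.12}, $\sp_0$ is surjective and induces a surjection of $\Z[\D_N]/I_N$-modules $\Coker(1-T_0)\surj H_1(X_{N,0},\Z)$. By Corollary \ref{t2.3}(i) and Theorem \ref{t2.1}, $\Coker(1-T_0)=T\,\bar\a_{N,\l}\op(\Z[\D_N]/I_N)\b_{N,\l}$, where $T=(\Z[\D_N]/I_N)/(1-\x_N)(1-\y_N)(\Z[\D_N]/I_N)$. Over $\Q$ multiplication by $(1-\x_N)(1-\y_N)$ is invertible on each nonzero eigencomponent of $(\Z[\D_N]/I_N)\ot\Q$, so $T$ is torsion; as $H_1(X_{N,0},\Z)$ is torsion-free, the summand $T\,\bar\a_{N,\l}$ maps to $0$. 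Hence $\sp_0(\a_{N,\l})=0$ and $\sp_0(\b_{N,\l})$ generates. The resulting surjection $(\Z[\D_N]/I_N)\b_{N,\l}\surj H_1(X_{N,0},\Z)$ of free cyclic $\Z[\D_N]/I_N$-modules is an isomorphism, because a surjective endomorphism of a finitely generated module over a commutative ring is injective; this is (ii). Finally $H_1(X_{N,\l},\Z)=(\Z[\D_N]/I_N)\a_{N,\l}\op(\Z[\D_N]/I_N)\b_{N,\l}$ and $\sp_0(a\,\a_{N,\l}+b\,\b_{N,\l})=b\,\sp_0(\b_{N,\l})$ vanishes iff $b=0$, so $\Ker(\sp_0)=(\Z[\D_N]/I_N)\a_{N,\l}$, giving (iii).

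\emph{Main obstacle.} I expect the identity $(1-\x_N)\Z[\D_N]\cap(1-\y_N)\Z[\D_N]=\Z[\D_N](1-\x_N)(1-\y_N)$ to be the delicate point: it would fail for a generic pair of ideals (the two principal ideals are not comaximal, since $\Z[\D_N]/(1-\x_N,1-\y_N)\cong\Z$), and it is precisely the tensor factorization $\Z[\D_N]=\Z[\mu_N]\ot\Z[\mu_N]$ that forces it. One must also make sure that $H_1$ of the nodal fiber really is the graph homology, so that rationality of all components removes any contribution of the normalizations.
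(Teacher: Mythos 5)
Your proof is correct, and while part (i) follows the paper's route, your argument for parts (ii) and (iii) is genuinely different from the paper's. For (i) both you and the authors reduce $H_1(X_{N,0},\Z)$ to the homology of the dual graph $K_{N,N}$ via the normalization sequence; the paper then simply asserts that the kernel of the resulting map $\Z[\D_N]\to\Z^{\oplus 2N}$ is $\Z[\D_N](1-\x_N)(1-\y_N)$ ``it is easy to see'', whereas you actually verify it, correctly, via the splitting $\Z[\D_N]=A\ot A$, $A=\Z\op A_0$ --- a worthwhile filling-in of the one point where the equality of the intersection with the product ideal could fail for a generic pair of non-comaximal ideals. For (ii) the paper proceeds geometrically: it degenerates the path $\d_{N,\l}$ to a chain of segments on $L_0'$ and $L_0$ and reads off that $\sp_0(\b_{N,\l})$ is the loop through $P_{0,0},P_{0,1},P_{1,1},P_{1,0}$, i.e.\ corresponds to $\pm(1-\x_N)(1-\y_N)$ under the isomorphism of (i). You instead argue purely algebraically: Lemma \ref{l2.12} gives a surjection from $\Coker(1-T_0)$, Proposition \ref{p2.7}(i) shows the $\a$-part of that cokernel is torsion while $H_1(X_{N,0},\Z)$ is torsion-free, so $\sp_0(\a_{N,\l})=0$ and $\sp_0(\b_{N,\l})$ generates; freeness then follows from the Vasconcelos fact that a surjective endomorphism of a finitely generated module over a commutative ring is injective. (The paper instead gets $\sp_0(\a_{N,\l})=0$ in (iii) from $(1-\x_N)(1-\y_N)\sp_0(\a_{N,\l})=\sp_0((1-T_0)\b_{N,\l})=0$ and the non-zero-divisor property --- essentially the same torsion observation packaged differently.) Your route avoids tracking any cycle through the degeneration, but it only identifies $\sp_0(\b_{N,\l})$ as a generator up to a unit of $\Z[\D_N]/I_N$; the paper's explicit identification with $\pm(1-\x_N)(1-\y_N)$ is the sharper statement that is invoked later (in the proof of Theorem \ref{t3.12}(i), where $B_0=C_\x C_\y$ depends on knowing the image exactly, not merely up to a unit). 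So your proof fully establishes the theorem as stated, but not the extra precision of ``Theorem \ref{t2.5} and its proof'' that the paper relies on downstream.
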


\begin{proof}
(i) We have an exact sequence 
\[0 \to H_1(X_{N,0},\Z) \to \bigoplus_{i,j} H_0(P_{i,j},\Z) \to \bigoplus_i H_0(L_i,\Z) \oplus 
\bigoplus_j H_0(L_j',\Z).\]
Identifying the fundamental class of $H_0(P_{i,j},\Z)$ with the element $\x_N^i\y_N^j \in \D_N$, 
we have an isomorphism of $\Z[\D_N]$-modules 
\[H_1(X_{N,0},\Z)  \simeq \Ker (\Z[\D_N] \xrightarrow{\t} \Z^{\oplus 2N}),\]
where
\[\t\Bigl( \sum_{i,j} n_{i,j}\x_N^i\y_N^j \Bigr)= \left(\Bigl(\sum_j n_{i,j}\Bigr)_i, \Bigl(\sum_i n_{i,j}\Bigr)_j \right).\] 
It is easy to see that $\Ker(\t)=\Z[\D_N](1-\x_N)(1-\y_N)$.  

(ii) Since the annihilator of $\Z[\D_N](1-\x_N)(1-\y_N)$ in $\Z[\D_N]$ is $I_N$, the free cyclicity follows. 
One sees that the path $\d_{N,\l}$ specializes to a segment from $(x,y)=(0,1)$ to $(1,1)$ on $L_0'$ followed by a segment from $(1,1)$ to $(1,0)$ on $L_0$. 
Hence the cycle $\b_{N,\l}$ specializes to the class of a loop connecting 
$P_{0,0}$, $P_{0,1}$, $P_{1,1}$, $P_{1,0}$ and $P_{0,0}$ respectively 
on $L_0$, $L_1'$, $L_1$ and $L_0'$, which agrees $\pm (1-\x_N)(1-\y_N)$ under the isomorphism of (i). 

(iii) By Proposition \ref{p2.7} (i) and Lemma \ref{l2.12}, 
\[(1-\x_N)(1-\y_N)\sp_0(\a_{N,\l})=\sp_0\circ (1-T_0)(\b_{N,\l})=0.\] 
Since $(1-\x_N)(1-\y_N)$ is not a zero-divisor of $\Z[\D_N]/I_N$, we have $\sp_0(\a_{N,\l})=0$ by (ii). 
Hence the statement follows by (ii) and Theorem \ref{t2.1}. 
\end{proof}

\begin{thm}\label{t2.4}\ 
\begin{enumerate} 
\item There is an exact sequence of $\Z[\D_N]/I_N$-modules 
\[0 \to H_1(C_N,\Z) \to H_1(X_{N,1},\Z) \ra \Ker(\Z[\mu_N] \xrightarrow{\deg} \Z) \to 0,\]
where $\D_N$ acts on $\m_N$ (hence on the free abelian group $\Z[\m_N]$) via $\chi_N^{1,-1}$. 
\item The group $H_1(X_{N,1},\Z)$ (resp. $H_1(C_N,\Z)$) is a free cyclic (resp. a cyclic) $\Z[\D_N]/I_N$-module generated by $\sp_1(\a_{N,\l})$ (resp. by $\sp_1(\b_{N,\l})=-(1-\x_N\y_N) \sp_1(\a_{N,\l})$).  
\item The group $\Ker(\sp_1)$ is a free cyclic $\Z[\D_N]/I_N$-module generated by $\g_{N,\l}$ (see Proposition \ref{p2.7} (ii)). 
\end{enumerate}
\end{thm}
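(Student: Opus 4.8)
The plan is to follow the pattern of the proof of Theorem \ref{t2.5}, replacing the nodal fiber $X_{N,0}$ by $X_{N,1}=C_N\cup E$ and combining the algebraic input of Corollary \ref{t2.3}(ii) with a rank count.

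For (i), I would apply the reduced Mayer--Vietoris sequence to the covering of the reduced fiber $X_{N,1}$ by $C_N$ and $E$. Since $E$ is rational we have $H_1(E,\Z)=0$ (its multiplicity $N$ in the scheme-theoretic fiber is irrelevant here, as $\sp_1$ lands in the homology of the reduced fiber, which is a deformation retract of $\sX_{N,1}$ as in Lemma \ref{l2.12}), while the intersection $C_N\cap E=\{Q_k\}_{k\in\Z/N\Z}$ is a set of $N$ points with $\wt H_1=0$. The sequence therefore collapses to
\[0\to H_1(C_N,\Z)\to H_1(X_{N,1},\Z)\to \wt H_0(\{Q_k\},\Z)\to 0,\]
and $\wt H_0(\{Q_k\},\Z)=\Ker(\Z[\mu_N]\xrightarrow{\deg}\Z)$ once we identify the points $Q_k$ with $\mu_N$. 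By the description in Section \ref{s2.7}, $\D_N$ permutes the $Q_k$ through $\chi_N^{1,-1}$, so this identification is $\D_N$-equivariant; one checks directly that $I_N$ annihilates $\Ker(\deg)$ (the norm elements $\sum_i\x_N^i$ and $\sum_j\y_N^j$ act as zero on the augmentation ideal), and $H_1(X_{N,1},\Z)$ is a $\Z[\D_N]/I_N$-module since it is a quotient of $\Coker(1-T_1)$ by Lemma \ref{l2.12}. This proves (i); it also shows $H_1(X_{N,1},\Z)$ is $\Z$-free of rank $(N-1)(N-2)+(N-1)=(N-1)^2$, using that $H_1(C_N,\Z)$ has rank twice the genus of $C_N$, i.e. $(N-1)(N-2)$.

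For (ii) and (iii) the key observation is a rank count. By Lemma \ref{l2.12} the specialization factors as $H_1(X_{N,\l},\Z)\surj \Coker(1-T_1)\to H_1(X_{N,1},\Z)$ with the second arrow surjective, and by Corollary \ref{t2.3}(ii) the module $\Coker(1-T_1)$ is free cyclic over $\Z[\D_N]/I_N$, hence $\Z$-free of rank $(N-1)^2$, generated by the class of $\a_{N,\l}$. Since the target is $\Z$-free of the same rank $(N-1)^2$ by (i), a surjection of free $\Z$-modules of equal finite rank is an isomorphism; thus $H_1(X_{N,1},\Z)$ is free cyclic generated by $\sp_1(\a_{N,\l})$, giving the first half of (ii). Consequently $\Ker(\sp_1)$ equals the kernel of $H_1(X_{N,\l},\Z)\surj\Coker(1-T_1)$, namely $\Im(1-T_1)$, which is free cyclic generated by $\g_{N,\l}$ by Corollary \ref{t2.3}(ii); this is (iii). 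The formula $\sp_1(\b_{N,\l})=-(1-\x_N\y_N)\sp_1(\a_{N,\l})$ then follows from Proposition \ref{p2.7}(ii) together with $\sp_1(\g_{N,\l})=0$.

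It remains to identify $H_1(C_N,\Z)$ inside $H_1(X_{N,1},\Z)$, which is where the main obstacle lies. Since $\chi_N^{1,-1}(\x_N\y_N)=1$, the element $\x_N\y_N$ acts trivially on $\Ker(\deg)$, so the quotient map $q\colon H_1(X_{N,1},\Z)\to \Ker(\deg)$ of (i) kills $(1-\x_N\y_N)$; hence $(1-\x_N\y_N)H_1(X_{N,1},\Z)\subseteq \Ker(q)=H_1(C_N,\Z)$. To upgrade this inclusion to an equality I would compute the quotient ring $R/(1-\x_N\y_N)R$ where $R=\Z[\D_N]/I_N$: setting $\y_N=\x_N^{-1}$ identifies it with $\Z[\x_N]/(\sum_{i=0}^{N-1}\x_N^i)$, which is $\Z$-free of rank $N-1$. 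As $q$ factors through $R/(1-\x_N\y_N)R$ and both this and $\Ker(\deg)$ are free of rank $N-1$, the induced surjection is an isomorphism, whence $\Ker(q)=(1-\x_N\y_N)H_1(X_{N,1},\Z)$. Using that $H_1(X_{N,1},\Z)=R\cdot\sp_1(\a_{N,\l})$ is free cyclic, this submodule equals $R\cdot(1-\x_N\y_N)\sp_1(\a_{N,\l})=R\cdot\sp_1(\b_{N,\l})$, so $H_1(C_N,\Z)$ is cyclic generated by $\sp_1(\b_{N,\l})$, completing (ii). The hard part is thus proving the inclusion is an equality rather than merely identifying the submodule generated by $\sp_1(\b_{N,\l})$; the cyclotomic rank computation of $R/(1-\x_N\y_N)R$ is what makes this precise.
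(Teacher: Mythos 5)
Your proposal is correct, and for part (i), part (iii), and the freeness half of part (ii) it runs along the same lines as the paper's own proof: the Mayer--Vietoris sequence for $X_{N,1}=C_N\cup E$ gives (i), and the combination of Lemma \ref{l2.12}, Corollary \ref{t2.3} (ii), and the rank count $(N-1)(N-2)+(N-1)=(N-1)^2$ gives both the free cyclicity of $H_1(X_{N,1},\Z)$ on $\sp_1(\a_{N,\l})$ and the identification $\Ker(\sp_1)=\Im(1-T_1)=(\Z[\D_N]/I_N)\,\g_{N,\l}$. Where you genuinely depart from the paper is the remaining claim of (ii), the cyclicity of $H_1(C_N,\Z)$ with generator $\sp_1(\b_{N,\l})$: the paper simply quotes Rohrlich's theorem on the homology of the Fermat curve (via \cite[Corollary 4.3]{otsubo1}), whereas you deduce it internally. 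Your argument --- $\x_N\y_N$ acts trivially on $\Ker(\deg)$ since $\chi_N^{1,-1}(\x_N\y_N)=1$, so the quotient map $q$ of (i) factors through $H_1(X_{N,1},\Z)/(1-\x_N\y_N)H_1(X_{N,1},\Z)\cong (\Z[\D_N]/I_N)/(1-\x_N\y_N)\cong\Z[x]/(\textstyle\sum_{i=0}^{N-1}x^i)$, which is $\Z$-free of rank $N-1$, so the induced surjection onto $\Ker(\deg)$ (also $\Z$-free of rank $N-1$) is an isomorphism, whence $H_1(C_N,\Z)=\Ker(q)=(1-\x_N\y_N)H_1(X_{N,1},\Z)=(\Z[\D_N]/I_N)\,\sp_1(\b_{N,\l})$ --- is complete and correct; it uses the Hopfian property of finitely generated free $\Z$-modules twice, legitimately, and the needed freeness of the quotient ring follows from $\sum_{i=0}^{N-1}x^i$ being monic of degree $N-1$. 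As for what each approach buys: the paper's citation is shorter and connects to finer known results on Fermat curves (e.g. the annihilator of $H_1(C_N,\Z)$ recorded in the Remark following the theorem), while your computation makes the theorem self-contained and in fact re-proves the cyclicity part of Rohrlich's theorem as a by-product of the geometry of the hypergeometric family.
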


\begin{proof}
(i) 
We have an exact sequence
\[0\to H_1(C_N,\Z) \to H_1(X_{N,1},\Z) \to \bigoplus_k H_0(Q_k,\Z) \to H_0(C_N,\Z) \oplus H_0(E,\Z).\]
Identifying the fundamental class of $H_0(Q_k,\Z)$ with $\z_N^k \in \mu_N$, we obtain the desired sequence. 
 
 (ii) By Corollary \ref{t2.3} (ii) and Lemma \ref{l2.12}, we have $\sp_1(\g_{N,\l})=0$ and $\sp_1(\a_{N,\l})$ generates 
 $H_1(X_{N,1},\Z)$ over $\Z[\D_N]/I_N$. 
Since the genus of $C_N$ is $(N-1)(N-2)/2$, we have by (i) 
\[\rank H_1(X_{N,1},\Z)=(N-1)^2=\rank \Z[\D_N]/I_N,\] 
hence the freeness of $H_1(X_{N,1},\Z)$ follows. 
On the other hand, by Rohrlich \cite{rohrlich} (cf. \cite[Corollary 4.3]{otsubo1}),  
$H_1(C_N,\Z)$ is a cyclic $\Z[\D_N]/I_N$-module generated by $\sp_1(\b_{N,\l})$, which agrees $-(1-\x_N\y_N) \sp_1(\a_{N,\l})$ by Corollary \ref{t2.3} (ii). 

(iii) Since $H_1(X_{N,\l},\Z)$ is freely generated by $\a_{N,\l}$ and $\g_{N,\l}$ by Theorem \ref{t2.1}, 
the assertion follows. 
\end{proof}

\begin{rmk}
The annihilator of $H_1(C_N,\Z)$ in $\Z[\D_N]/I_N$ is generated by the images of $\sum_{i\in\Z/N\Z} (\x_N\y_N)^i$ and $\sum_{0\le i<j<N} \x_N^i\y_N^j$ (see \cite[Proposition 4.4]{otsubo1}). 
\end{rmk}

\section{The adelic hypergeometric function}

\subsection{$\L$-modules}\label{s3.1}

Define a profinite group as 
\[\D_\infty:=\varprojlim_N \D_N.\] 
It is isomorphic to $\wh\Z(1)\times \wh\Z(1)$ and generated topologically by  $\x:=(\x_N)$ and $\y:=(\y_N)$.  
We put the completed group ring with $\wh\Z$-coefficients as 
\[\L:=\wh\Z[[\D_\infty]] = \varprojlim_N \wh\Z[\D_N].\]
Elements of $\L$ is regarded as  $\wh\Z$-valued measures on $\D_\infty$: the measure associated to an element 
$\Phi=(\sum_{g_N\in \D_N} c_N(g_N) g_N)_N \in \L$ is given by 
\[\mu_\Phi(g + N\D_\infty)=c_N(g \text{ mod } N)\]
for any $g \in \D_\infty$ and $N$. 

\begin{rmk}\label{r3.1}
If we consider the pro-$l$ part of $\L$ for a prime number $l$, we have an isomorphism
\[\Z_l[[\D_{l^\infty}]]:=\varprojlim_n \Z_l[[\D_{l^n}]] \os\simeq\lra \Z_l[[S,T]]; \quad (\x_{l^n}) \mapsto 1-S, \ (\y_{l^n}) \mapsto 1-T\]
to the ring of two-variable formal power series with coefficients in $\Z_l$. 
\end{rmk}

The adelic homology group
\[H_1(X_{N,\l},\wh \Z) := \varprojlim_n H_1(X_{N,\l},\Z/n\Z)\]
is a $\wh\Z[\D_N]$-module, hence is a $\L$-module. 
Since the transition maps $(\pi_{M,N})_*$ are compatible with the $\L$-actions, 
\[H_1(X_{\infty,\l},\wh\Z):= \varprojlim_N H_1(X_{N,\l},\wh\Z)\]
becomes a  $\L$-module, continuous with respect to the profinite topology. 
The compatible systems of homology classes $(\a_{N,\l})$, $(\b_{N,\l})$ and $(\g_{N,\l})$ (see Section \ref{s2.3} and Proposition \ref{p2.7} (ii)) define respectively 
\[\a_\l, \ \b_\l, \ \g_\l=(1-\x\y)_*\a_\l+\b_\l \ \in H_1(X_{\infty,\l},\wh\Z).\] 
Define similarly other $\L$-modules such as $H_1(X_{\infty,s},\wh\Z)$ for $s\in\{0,1,\infty\}$ or $H_1(C_{\infty},\wh\Z)$ (see Section \ref{s2.7}). 
Then we have $\L$-homomorphisms 
\[H_1(X_{\infty,\l},\wh\Z) \xrightarrow{\sp_s} H_1(X_{\infty,s},\wh\Z), \quad 
 H_1(C_{\infty},\wh\Z)\hookrightarrow H_1(X_{\infty,1},\wh\Z). \]

\begin{rmk}
Put 
\[H_1(X_{\infty,\l},\Z)= \varprojlim_N H_1(X_{N,\l},\Z), \quad H^1(X_{\infty,\l},\C)= \varinjlim_N H^1(X_{N,\l},\C).\]
Then $\a_\l$, $\b_\l$ and $\g_\l$ are contained in the first group. For each $(s,t) \in (\Q/\Z)^{\oplus 2}$, $s, t \ne 0$, 
elements $\o_\l(s,t)$ and $\y_\l(s,t)$ of the latter group are defined to be the classes respectively of 
$\o_{N,\l}^{a,b}$ and $\y_{\l,N}^{a,b}$ where $(s,t)=(a/N,b/N)$ (see \eqref{e2.3} for the independence on the presentation). 
We have the non-degenerate period paring 
\[H_1(X_{\infty,\l},\Z) \times H^1(X_{\infty,\l},\C) \to \C,\]
written by $\int$. By Theorem \ref{t2.2}, the elements $\a_\l$ and $\b_\l$ are characterized by 
\[\bmat{\int_{\a_\l} \o_\l(s,t)& \int_{\b_\l} \o_\l(s,t) \\ \int_{\a_\l} \y_\l(s,t)& \int_{\b_\l} \y_\l(s,t)} 
= \bmat{f(s,t;\l) & (1-\be(s))(1-\be(t))g(s,t;\l) \\ \frac{d}{d\l}f(s,t;\l) & (1-\be(s))(1-\be(t))\frac{d}{d\l} g(s,t;\l)}\]
for any $(s,t)$ as above, where we put
\[f(s,t;\l)=2\pi i F(s,t;1;\l),  \quad g(s,t;\l)= B(s,t)F(s,t;s+t;1-\l). \]
\end{rmk}

For the freeness over $\L$, we will use the following. 
\begin{lem}\label{l3.1}
The natural map 
\[\wh\Z[[\D_\infty]] \to \varprojlim_{N,n} \Z/n\Z[\D_N]/I_N\] 
is an isomorphism of topological rings. 
\end{lem}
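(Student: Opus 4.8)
The plan is to show that $\wh\Z[[\D_\infty]]$ already satisfies the universal property of the double limit on the right, so that the natural map is forced to be an isomorphism. The key structural point is that the indexing by $n$ (the coefficient modulus) and by $N$ (the level of $\D_N$) can be merged: I would first observe that $\Z/n\Z[\D_N]/I_N$ is a quotient of $\wh\Z[\D_N]$, and that the two limits interact coherently. Concretely, note that $\wh\Z=\varprojlim_n \Z/n\Z$, so $\wh\Z[\D_N]=\varprojlim_n \Z/n\Z[\D_N]$ since $\D_N$ is finite and the group ring is a finite free module over the coefficients. Quotienting by the (finitely generated, hence limit-compatible) ideal $I_N$ gives $\wh\Z[\D_N]/I_N = \varprojlim_n \Z/n\Z[\D_N]/I_N$, so the right-hand double limit collapses to $\varprojlim_N \wh\Z[\D_N]/I_N$.

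Next I would reconcile this with the definition $\L=\wh\Z[[\D_\infty]]=\varprojlim_N \wh\Z[\D_N]$. The discrepancy is exactly the ideals $I_N$: the target has an extra quotient at each level. The crucial compatibility to check is that the transition maps $\wh\Z[\D_M]\to\wh\Z[\D_N]$ for $N\mid M$ carry $I_M$ into $I_N$ (which is clear, since the augmentation-type relations $\sum_i \x_N^i$ and $\sum_i \y_N^i$ are the images of the corresponding relations at level $M$ under the projection $\D_M\to\D_N$), so that the system $(\wh\Z[\D_N]/I_N)_N$ is a genuine inverse system and the induced map
\[
\varprojlim_N \wh\Z[\D_N] \lra \varprojlim_N \wh\Z[\D_N]/I_N
\]
is well defined. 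I then need to argue this map is an isomorphism. Injectivity amounts to showing that an element of $\varprojlim_N \wh\Z[\D_N]$ lying in $I_N$ at every level is zero; surjectivity amounts to lifting a compatible system of classes mod $I_N$ to a compatible system in $\wh\Z[\D_N]$.

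The main obstacle I expect is precisely this last identification: naively $\L$ is defined without the $I_N$, so one must explain why killing $I_N$ at each finite level costs nothing in the limit. The resolution should come from the transition maps themselves: when $N\mid M$, the map $\D_M\to\D_N$ sends the element $\x_M$ to $\x_N$, and the relation $\sum_{i\in\Z/N\Z}\x_N^i$ is the image of $\frac{1}{M/N}\cdot(\text{partial sum})$ — more carefully, the generators of $I_N$ are hit by elements of the augmentation ideal structure at level $M$, so compatible systems are automatically ``eventually in $I_N$'' in a controlled way. I would make this precise by showing $\varprojlim_N I_N=0$ inside $\varprojlim_N\wh\Z[\D_N]$: an element of $I_N$ is divisible by $\sum_{i}\x_N^i$ or $\sum_i\y_N^i$, and under $\pi_{M,N}^*$-type transition these norm elements become increasingly divisible, forcing the inverse limit of the $I_N$ to vanish and the snake/exactness of $\varprojlim$ (left exact) to give both injectivity and surjectivity. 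Once $\varprojlim_N I_N=0$ is established, the six-term exactness of the inverse limit applied to $0\to I_N\to\wh\Z[\D_N]\to\wh\Z[\D_N]/I_N\to 0$ yields the isomorphism directly, and the topological statement follows since all maps are continuous between profinite rings. I would flag the vanishing $\varprojlim_N I_N=0$ as the one step deserving a genuine (if short) computation, using the explicit transition formula $\pi_{M,N}$ from Section~\ref{s2.1}.
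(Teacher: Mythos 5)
Your route differs from the paper's: you take $\varprojlim_n$ first (collapsing the coefficient limit to $\wh\Z$) and then try to compare $\varprojlim_N \wh\Z[\D_N]$ with $\varprojlim_N \wh\Z[\D_N]/I_N$, whereas the paper fixes $n$ first and takes $\varprojlim_N$ of the sequence $0 \to I_N/n \to \Z/n\Z[\D_N] \to \Z/n\Z[\D_N]/I_N \to 0$. Your preliminary reductions are fine: the double limit does factor either way, $\Z[\D_N]/I_N$ is $\Z$-free so $\wh\Z[\D_N]/I_N = \varprojlim_n \Z/n\Z[\D_N]/I_N$, and the observation that $I_{mN}$ maps onto $mI_N$ (since $\sum_{i\in\Z/mN\Z}\x_{mN}^i \mapsto m\sum_{i\in\Z/N\Z}\x_N^i$) correctly yields $\varprojlim_N I_N = 0$ inside $\varprojlim_N\wh\Z[\D_N]$, because an element of the limit must lie in $\bigcap_m m I_N$ at each level and $I_N\ot\wh\Z$ is a free $\wh\Z$-module.

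The genuine gap is surjectivity. You assert that $\varprojlim_N I_N = 0$ together with ``six-term exactness'' or ``left exactness'' of $\varprojlim$ yields the isomorphism; it does not. Left exactness gives only injectivity, and the six-term sequence places $\varprojlim^1_N I_N$ as the obstruction to surjectivity, which you never address. This is not a formality: your own divisibility argument shows the images of the transition maps $I_{mN}\to I_N$ are the strictly decreasing chain $mI_N$, which is exactly the configuration in which $\varprojlim^1$ is typically nonzero (compare $0\to p^n\Z\to\Z\to\Z/p^n\Z\to 0$, where $\varprojlim_n p^n\Z=0$ yet $\Z\to\Z_p$ is not surjective because $\varprojlim^1_n p^n\Z\ne 0$). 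So Mittag--Leffler fails for your system $(I_N\ot\wh\Z)_N$ and the vanishing of $\varprojlim^1$ needs a separate argument --- for instance, that each $I_N\ot\wh\Z$ is a compact (profinite) group with continuous transition maps, for which $\varprojlim^1$ always vanishes. The paper sidesteps this entirely by reducing mod $n$ first: then the transition map $I_{mN}/n\to I_N/n$ is \emph{zero} whenever $n\mid m$, so the system $(I_N/n)_N$ trivially satisfies Mittag--Leffler and both $\varprojlim$ and $\varprojlim^1$ vanish; taking $\varprojlim_n$ at the end is harmless. Either repair works, but as written your proof does not establish that a compatible system of classes mod $I_N$ lifts.
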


\begin{proof}For a fixed $n$, consider the exact sequence of inverse systems with respect to $N$
\[0 \to I_N/n \to \Z/n\Z[\D_N] \to \Z/n\Z[\D_N]/I_N \to 0. \]
Since the map $\Z[\D_{mN}]\to\Z[\D_N]$ sends $I_{mN}$ onto $mI_N$, 
the map $I_{mN}/n \to I_N/n$ is trivial whenever $n\mid m$. 
In particular, the system $(I_N/n)_N$ satisfies the Mittag-Leffler condition. 
Hence $\varprojlim_N I_N/n = \varprojlim_N^1 I_N/n=0$ and $\Z/n\Z[[\D_\infty]] = \varprojlim_{N} \Z/n\Z[\D_N]/I_N$ for any $n$. 
Taking $\varprojlim_n$, we obtain the lemma. 
\end{proof}

By Lemma \ref{l3.1}, we obtain the following immediate consequences of Theorems \ref{t2.1}, \ref{t2.5}, \ref{t2.4}, and the freeness of $H_1(C_{\infty},\wh\Z)$ \cite[Theorem 6.1]{otsubo1}.
\begin{cor}\label{c4.2} \ 
\begin{enumerate}
\item $H_1(X_{\infty,\l},\wh\Z)$ is a free $\L$-module of rank two generated by $\a_\l$ and $\b_\l$. 
\item $\Ker(\sp_0)$ and $H_1(X_{\infty,0},\wh\Z)$ are free cyclic $\L$-modules generated respectively by $\a_{\l}$ and $\sp_0(\b_{\l})$. 
\item $\Ker(\sp_1)$, $H_1(X_{\infty,1},\wh\Z)$ and  $H_1(C_{\infty},\wh\Z)$ are free cyclic $\L$-modules generated respectively by $\g_\l$, $\sp_1(\a_\l)$ and $\sp_1(\b_\l)=-(1-\x\y)\sp_1(\a_\l)$.
\end{enumerate}
\end{cor}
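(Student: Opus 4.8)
The plan is to deduce all three statements by applying $\varprojlim_N$ to the corresponding finite-level results, using Lemma \ref{l3.1} to pass from the quotient rings $\Z/n\Z[\D_N]/I_N$ to the completed group ring $\L$. The central mechanism is that freeness and free cyclicity are preserved under a well-behaved inverse limit, and that the generators at each level (namely $\a_{N,\l}$, $\b_{N,\l}$, $\g_{N,\l}$, $\sp_s(\b_{N,\l})$, etc.) form compatible systems under the transition maps, so that their limits are the desired generators over $\L$.

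For (i), I would start from Theorem \ref{t2.1}, which gives an isomorphism of $\Z[\D_N]/I_N$-modules
\[
(\Z[\D_N]/I_N)^{\oplus 2} \os\simeq\lra H_1(X_{N,\l},\Z), \quad (u,v)\mapsto u\,\a_{N,\l}+v\,\b_{N,\l}.
\]
Tensoring with $\Z/n\Z$ and taking $\varprojlim_{N,n}$, the left-hand side becomes $\L^{\oplus 2}$ by Lemma \ref{l3.1}, while the right-hand side becomes $H_1(X_{\infty,\l},\wh\Z)$ by definition; the compatibility $(\pi_{nN,N})_*\a_{nN}=\a_N$ and likewise for $\b$ (stated in Section \ref{s2.3}) guarantees that the limit isomorphism sends the standard basis to $\a_\l$ and $\b_\l$. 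For (ii) and (iii), the same procedure applies respectively to Theorem \ref{t2.5} and Theorem \ref{t2.4}, together with the freeness of $H_1(C_\infty,\wh\Z)$ from \cite[Theorem 6.1]{otsubo1}. The relation $\sp_1(\b_\l)=-(1-\x\y)\sp_1(\a_\l)$ is obtained by passing to the limit in the corresponding finite-level identity of Theorem \ref{t2.4} (ii), and the statements about $\Ker(\sp_0)$ and $\Ker(\sp_1)$ follow from Theorem \ref{t2.5} (iii) and Theorem \ref{t2.4} (iii) in the same way.

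The step requiring care is the interchange of $\varprojlim_N$ with the short exact sequences defining kernels and cokernels of $\sp_s$, since $\varprojlim$ is only left exact in general and an obstructing $\varprojlim^1$ term could appear. The key point is that each finite-level module in these sequences is a finitely generated module over $\Z/n\Z[\D_N]/I_N$, hence finite; an inverse system of finite groups automatically satisfies the Mittag-Leffler condition, so the relevant $\varprojlim^1$ vanishes and exactness is preserved in the limit. This is precisely the same mechanism already used in the proof of Lemma \ref{l3.1}, and it is the only nonformal input; once it is invoked, the three statements are formal consequences of the finite-level theorems and Lemma \ref{l3.1}. I therefore expect the main (modest) obstacle to be verifying that all the modules in sight are genuinely finite at each level $(N,n)$, which is immediate from their finite generation over the finite rings $\Z/n\Z[\D_N]/I_N$.
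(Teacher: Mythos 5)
Your proposal is correct and follows essentially the same route as the paper, which simply declares the corollary an immediate consequence of Lemma \ref{l3.1} together with Theorems \ref{t2.1}, \ref{t2.5}, \ref{t2.4} and the cited freeness of $H_1(C_\infty,\wh\Z)$. Your elaboration of the limit-passage (compatibility of the generators under the transition maps, and vanishing of $\varprojlim^1$ via finiteness of the modules at each level $(N,n)$) is exactly the mechanism the paper leaves implicit.
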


\subsection{Galois action}\label{s3.2}
A general reference for this section is \cite{4demi}.  
For a smooth projective curve $X$ over $\C$, define the \'etale homology with $\Z/n\Z$-coefficients by 
\[H^\et_1(X,\Z/n\Z) = H^1_\et(X, \Z/n\Z(1)).\]
We have the perfect Poincar\'e duality pairing
\begin{equation}\label{e4.1}
\langle \ , \ \rangle \colon  H_1^\et(X,\Z/n\Z) \ot H_\et^1(X,\Z/n\Z) \to \Z/n\Z.
\end{equation}
If a group $\D$ acts on $X$ from the left, it acts on the cohomology from the right by the pull-back, written $g^*$ for $g\in \D$. It acts on the homology from the left by $g_*:=(g^{-1})^*$. Then the actions are adjoint to each other, i.e. $\langle g_*\g ,\o \rangle = \langle \g, g^* \o\rangle$. 
Suppose that $X$ is defined over a subfield $k \subset \ol\Q$, and let 
\[G_k=\mathrm{Gal}(\ol\Q/k)\]
be the absolute Galois group. 
If $X_k$ is a model of $X$ over $k$, we have for any $r\in\Z$ 
\[H_\et^1(X_k \ot_k \ol\Q, \Z/n\Z(r)) \simeq H_\et^1(X,\Z/n\Z(r)).\]
Hence $G_k$ acts on the homology and the cohomology from the left in such a way that   
$\langle \s\g, \s\o \rangle = \langle \g, \o\rangle$ ($\s \in G_k$). 
We have Artin's comparison isomorphism \cite[XI, 4 and XVI, 4]{artin} between the \'etale and the Betti homology
\begin{equation*}
H_1^\et(X, \Z/n\Z)  \simeq H_1(X,\Z/n\Z)
\end{equation*}
compatible with the left $\D$-actions. 
Therefore, the left $\D$-module $H_1(X,\Z/n\Z)$ is endowed with a left $G_k$-action 
and the two actions commute if the action of $\D$ on $X$ is defined over $k$. 

More generally, for any curve $X$ embeddable in a smooth projective variety $P$ of dimension $d$, its \'etale homology is defined by 
\[H_1^{\et}(X,\Z/n\Z)=H_{X,\et}^{2d-1}(P,\Z/n\Z(d)).\] 
For a subvariety $Y \subset X$, the relative \'etale homology is defined by 
\[H_1^\et (X,Y;\Z/n\Z)=H^{2d-1} \mathrm{Cone}[R\Gamma_Y(P,\Z/n\Z(d)) \to R\Gamma_X(P,\Z/n\Z(d))]. \]
These groups admit $G_k$-actions when $X \inj P$ and $Y$ are defined over $k$. 

\subsection{Definition}\label{s3.3}

We apply the generalities in the preceding section to the hypergeometric curves $X_{N,\l}$. 
If $\l \in L(k)=k-\{0,1\}$, then $X_{N,\l}$ is defined over $k$ and the $\L$-module $H_1(X_{\infty,\l}, \wh\Z)$ admits a left $G_k$-action. 
Let 
\[\cyc\colon G_\Q \to \wh\Z^\times\] 
denote the cyclotomic character, i.e. 
$\s\z_N = \z_N^{\cyc(\s)}$ for any $\s\in G_\Q$ and $N$. 
The group $G_\Q$ acts continuously on $\wh\Z(1)$, hence on the ring $\L$, 
and the $G_k$-action on $H_1(X_{\infty,\l}, \wh\Z)$ is semilinear, 
i.e. $\s(c\g)=(\s c) (\s\g)$ for any $\s\in G_k$, $c\in\L$ and $\g \in H_1(X_{\infty,\l}, \wh\Z)$. 

For a free $\L$-module $V$ of rank $n$ equipped with a semilinear $G_k$-action and a $\L$-basis 
$v=\{v_1, \dots, v_n\}$ of $V$, define a map 
\[M^v\colon G_k \to \GL_n(\L)\] 
by 
$\s \bmat{v_1 & \cdots & v_n} = \bmat{v_1 & \cdots & v_n}M^v(\s)$. 

\begin{ppn}\label{p4.1}\ 
\begin{enumerate}
\item
The map $M^{v}$ is a $1$-cocycle, i.e. for any $\s, \t\in G_k$, 
\[M^{v}(\s\t)= M^{v}(\s) \cdot \s M^v(\t).\]
\item The cohomology class of $M^{v}$ in $H^1(G_k,\GL_n(\L))$ 
is independent of the choice of $v$. 
\item The restriction of the map $M^{v}$ to $G_{k\Q^\ab}$ is a group homomorphism. 
\item The restriction of the maps $\det M^v$ and $\tr M^{v}$ to $G_{k\Q^\ab}$ are independent of the choice of $v$. 
\end{enumerate} 
\end{ppn}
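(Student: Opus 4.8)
The plan is to deduce all four assertions from two elementary observations: how the matrix $M^v$ transforms under a change of $\L$-basis, and the fact that $G_{k\Q^\ab}$ acts trivially on $\L$. For the cocycle property (i), the key computation is that for any matrix $A\in\GL_n(\L)$ one has $\s([v]\,A)=[v]\,M^v(\s)\,(\s A)$, where I write $[v]=\bmat{v_1 & \cdots & v_n}$; this is immediate from the semilinearity $\s(c\g)=(\s c)(\s\g)$ together with the definition $\s v_i=\sum_k v_k M^v(\s)_{ki}$. Applying it with $A=M^v(\t)$ gives $\s\t[v]=\s([v]M^v(\t))=[v]\,M^v(\s)\,\s M^v(\t)$, whereas by definition $\s\t[v]=[v]\,M^v(\s\t)$. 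Since $V$ is free over $\L$, comparing coefficients yields $M^v(\s\t)=M^v(\s)\,\s M^v(\t)$.

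For (ii), let $v'$ be a second $\L$-basis, so that $[v']=[v]P$ for a unique $P\in\GL_n(\L)$. Computing $\s[v']$ in two ways---once as $\s([v]P)=[v]\,M^v(\s)\,(\s P)$ using the identity above, and once as $[v']M^{v'}(\s)=[v]\,P\,M^{v'}(\s)$---and invoking freeness gives $M^{v'}(\s)=P^{-1}\,M^v(\s)\,(\s P)$. This is precisely the relation expressing that $M^{v'}$ and $M^v$ are cohomologous via $P$, so they define the same class in $H^1(G_k,\GL_n(\L))$.

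The remaining parts rest on the observation that $G_{k\Q^\ab}$ acts trivially on $\L$: any $\s\in G_{k\Q^\ab}$ fixes $\Q^\ab=\bigcup_N\Q(\m_N)$, hence fixes every root of unity, so $\cyc(\s)=1$ and $\s$ acts trivially on $\wh\Z(1)$, on $\D_\infty\cong\wh\Z(1)\times\wh\Z(1)$, and therefore on $\L=\wh\Z[[\D_\infty]]$. Given this, (iii) follows because the cocycle relation of (i) reduces, for $\s,\t\in G_{k\Q^\ab}$, to $M^v(\s\t)=M^v(\s)\,M^v(\t)$, since $\s M^v(\t)=M^v(\t)$; thus the restriction is a group homomorphism. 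For (iv), the change-of-basis formula of (ii) specializes for $\s\in G_{k\Q^\ab}$ to genuine conjugation $M^{v'}(\s)=P^{-1}M^v(\s)\,P$ (because $\s P=P$), and conjugation preserves both trace and determinant, giving the asserted independence of $v$. There is no serious obstacle; the only point requiring care is the triviality of the $G_{k\Q^\ab}$-action on $\L$, which is exactly what forces the passage to the abelian closure in (iii) and (iv)---over the full $G_k$ the coefficient action is nontrivial, so $M^v$ is only a cocycle rather than a homomorphism and trace is a priori well-defined only up to the semilinear twist.
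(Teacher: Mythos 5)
Your proof is correct and follows essentially the same route as the paper: the cocycle identity from semilinearity, the change-of-basis relation $M^{u}(\s)=P^{-1}M^v(\s)\,\s P$ as the non-abelian coboundary condition for (ii), and the triviality of the $G_{k\Q^\ab}$-action on $\L$ to reduce (iii) and (iv) to a genuine homomorphism and genuine conjugation. You merely spell out the details that the paper labels ``straight-forward.''
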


\begin{proof}
The proof of (i) is straight-forward. 
If another basis $u=\{u_1,\dots, u_n\}$ is given by $\bmat{u_1& \cdots & u_n}=\bmat{v_1 & \cdots & v_n}P$ with $P\in \GL_n(\L)$, 
then
\begin{equation}\label{e4.2}
M^u(\s)=P^{-1}M^v(\s)\s P.
\end{equation}
This is nothing but the coboundary condition of the non-abelian cohomology (cf. \cite[Chapitre VII, Annexe]{serre}), hence (ii). 
Since $G_{\Q^\ab}$ acts trivially on $\L$, (iii) and (iv) follow from (i) and \eqref{e4.2}, respectively. 
\end{proof}

We apply this to the $\L$-module $H_1(X_{\infty,\l},\wh\Z)$ of rank two (Corollary \ref{c4.2} (i)). 

\begin{dfn}\label{d4.1}
Choose a $\L$-basis $v$ of $H_1(X_{\infty,\l},\wh \Z)$, e.g. $v=\{\a_\l,\b_\l\}$ or $\{\a_\l,\g_\l\}$, and let 
\[M_\l^{v}\colon G_k \to \GL_2(\L)\]
be the associated cocycle. Define the {\it adelic hypergeometric function of argument $\l$} with respect to the basis $v$ by the map 
\[F_\l^{v}\colon G_k \to \L; \quad F_\l^{v}(\s)=\tr M_\l^{v}(\s).\]
\end{dfn}

We evaluate this function at characters of $\D_\infty$. 
We have an isomorphism
\[(\Q/\Z)^{\oplus 2} \simeq  \Hom_\mathrm{cont}(\D_\infty, (\Z^\ab)^\times); 
\quad 
(s,t)\mapsto \chi^{s,t},\]
where $\chi^{\frac{a}{N},\frac{b}{N}}$ is the composition of the canonical map $\D_\infty \to \D_N$ and $\chi_N^{a,b}$ (see Section \ref{s2.1}). 
For any $\Phi \in \L$, define a function as 
\[\Phi \colon (\Q/\Z)^{\oplus 2}\to  \wh\Z \ot \Z^\ab; \quad \Phi(s,t) = \int_{\D_\infty} \chi^{s,t} \, d\m_\Phi.\]
When $(s,t)=(a/N,b/N)$, the value $\Phi(s,t)$ is nothing but the image of $\Phi$ under the composition 
\[\L \to \wh\Z[\D_N] \xrightarrow{\chi_N^{a,b}} \wh\Z \ot \Z[\m_N].\]

\begin{ppn}\label{p4.7}
If $(s, t) \in (N^{-1}\Z/\Z)^{\oplus 2}$ and $\s\in G_{k(\mu_N)}$, then
$F_\l^{v}(\s)(s,t)$ is independent of the choice of a basis $v$. 
\end{ppn}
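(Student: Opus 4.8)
The plan is to reduce the change-of-basis formula \eqref{e4.2} along the natural projection $\L \to \wh\Z[\D_N]$ and to observe that, at this finite level, the hypotheses $(s,t)\in(N^{-1}\Z/\Z)^{\oplus 2}$ and $\s\in G_{k(\mu_N)}$ conspire to kill the semilinear twist, exactly as the stronger hypothesis $\s\in G_{k\Q^\ab}$ does in Proposition \ref{p4.1}(iv). First I would record, as already noted just before the statement, that for $(s,t)=(a/N,b/N)$ the evaluation $\Phi\mapsto\Phi(s,t)$ factors as
\[\L \to \wh\Z[\D_N] \xrightarrow{\chi_N^{a,b}} \wh\Z\ot\Z[\mu_N].\]
Thus it suffices to compare the images of $\tr M_\l^u(\s)$ and $\tr M_\l^v(\s)$ in $\wh\Z[\D_N]$ for two $\L$-bases $u$, $v$.

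The argument then rests on two structural facts. The first is that the projection $\L\to\wh\Z[\D_N]$ is a ring homomorphism which is $G_k$-equivariant, since the $G_k$-action on $\L$ is induced from its action on $\D_\infty$ compatibly with the surjection $\D_\infty\surj\D_N$. The second is that for $\s\in G_{k(\mu_N)}$ one has $\cyc(\s)\equiv 1\pmod N$, so $\s$ fixes $\D_N=\mu_N\times\mu_N$ pointwise and therefore acts as the identity on the ring $\wh\Z[\D_N]$. Both are immediate from the definitions, but they are the whole point: they convert the twisted conjugation of \eqref{e4.2} into an honest conjugation after reduction.

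Finally, writing $\bar{\,\cdot\,}$ for the reduction to $\wh\Z[\D_N]$ applied entrywise, I would apply it to \eqref{e4.2}: if $\bmat{u_1 & u_2}=\bmat{v_1 & v_2}P$ with $P\in\GL_2(\L)$, then
\[\bar M_\l^u(\s)=\bar P^{-1}\,\bar M_\l^v(\s)\,\overline{\s P}.\]
By equivariance $\overline{\s P}=\s\bar P$, and by the triviality of the $\s$-action on $\wh\Z[\D_N]$ this is just $\bar P$, so $\bar M_\l^u(\s)$ and $\bar M_\l^v(\s)$ are genuinely conjugate over $\wh\Z[\D_N]$. Their traces therefore coincide; since the trace commutes with the ring homomorphism $\L\to\wh\Z[\D_N]$, the images of $\tr M_\l^u(\s)$ and $\tr M_\l^v(\s)$ in $\wh\Z[\D_N]$ agree, and applying $\chi_N^{a,b}$ yields $F_\l^u(\s)(s,t)=F_\l^v(\s)(s,t)$. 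The only step requiring genuine care—and the heart of the matter—is the vanishing of the semilinear twist after reduction mod $N$; everything else is the formal conjugation-invariance of the trace and its compatibility with ring maps.
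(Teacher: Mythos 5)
Your argument is correct and is essentially the paper's own proof: both reduce the twisted-conjugation relation \eqref{e4.2} along $\L\to\wh\Z[\D_N]$ and observe that $\s\in G_{k(\mu_N)}$ acts trivially on $\D_N$ (since $\cyc(\s)\equiv 1\pmod N$), so that $P$ and $\s P$ have the same image and the reduced matrices become honestly conjugate, forcing equal traces. The only difference is that you spell out the equivariance of the reduction map and the vanishing of the cyclotomic twist in slightly more detail than the paper does.
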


\begin{proof}The values of $F_\l^{v}(\s)$ on $(N^{-1}\Z/\Z)^{\oplus 2}$ depend only on the image of $F_\l^{v}(\s)$ under $\L \to \wh\Z[\D_N]$. Since $G_{k(\mu_N)}$ acts trivially on $\D_N$, 
the matrices $P$ and $\s P$ in \eqref{e4.2} have the same image in $\GL_2(\wh\Z[\D_N])$ 
and the image of 
$F_\l^{v}(\s)$ in $\wh\Z[\D_N]$ is independent of the basis, hence the proposition. 
\end{proof}

\begin{dfn}\label{d3.9}
For a subfield $k\subset \ol\Q$ and a positive integer $N$, put 
\[\sL_{k,N} = L(k) \times G_{k(\mu_N)} \times (N^{-1}\Z/\Z)^{\oplus 2},\]
and put 
\[\sL = \bigcup_{k,N} \sL_{k,N} \ \subset L(\ol\Q) \times G_\Q \times (\Q/\Z)^{\oplus 2}.\]
Define the {\it adelic hypergeometric function} by 
\[F \colon \sL \to \wh\Z \ot \Z^\ab; \quad (\l,\s, (s,t)) \mapsto F_\l(\s)(s,t),\] 
where we omit writing $v$ because of Proposition \ref{p4.7}. 
This is the adelic analogue of the complex hypergeometric functions $F(a,b;1;\l)$. 
\end{dfn}

\begin{ppn}\label{p3.9}\ 
\begin{enumerate}
\item We have the symmetry $F_\l(\s)(s,t)=F_\l(\s)(t,s)$.  
\item For any $\t\in G_\Q$, we have 
\[\t(F_\l(\s)(s,t))=(\t F_\l(\s))(s,t)=F_\l(\s)(\cyc(\t)s,\cyc(\t)t).\]
\end{enumerate}
\end{ppn}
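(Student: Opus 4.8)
The plan is to prove both symmetry properties by tracking how the relevant group actions interact with the trace of the cocycle matrix, reducing everything to statements already established in the excerpt. The two claims are of rather different natures: part (i) is geometric, relying on the involution $\iota$ of Corollary~\ref{c2.9}, while part (ii) is an equivariance statement for the residual Galois action on $\L$ encoded by the cyclotomic character.

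For part (i), I would first recall from Corollary~\ref{c2.9} that the involution $\iota(x,y)=(y,x)$ acts on $H_1(X_{N,\l},\Z)$ as $-1$, hence $\iota_*$ acts on $H_1(X_{\infty,\l},\wh\Z)$ as $-1$ as well. The key point is that $\iota$ interchanges the two factors of $\D_\infty$, i.e. $\iota\x\iota^{-1}=\y$ and $\iota\y\iota^{-1}=\x$, so conjugation by $\iota$ induces the ring automorphism of $\L$ swapping $\x$ and $\y$; at the level of characters this automorphism sends $\chi^{s,t}$ to $\chi^{t,s}$. Since $\iota_*=-1$ is a $\L$-semilinear automorphism commuting with the $G_k$-action (as $\iota$ is defined over $k$) up to this swap on $\L$, evaluating the cocycle in the $\iota$-twisted basis and comparing traces yields $F_\l(\s)(s,t)=F_\l(\s)(t,s)$. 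Concretely, by Proposition~\ref{p4.7} the value is basis-independent, so I may compute $M_\l^v(\s)$ in the basis $\{\a_\l,\b_\l\}$, apply $\iota_*$, and observe that the trace of the transported matrix, after applying the swap automorphism to its entries and then evaluating at $\chi^{s,t}$, equals the original trace evaluated at $\chi^{t,s}$; the overall sign from $\iota_*=-1$ cancels in the trace of a conjugate.

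For part (ii), the first equality $\t(F_\l(\s)(s,t))=(\t F_\l(\s))(s,t)$ is essentially a compatibility of the $G_\Q$-action on $\wh\Z\ot\Z^\ab$ with integration against the measure $\m_\Phi$: writing $F_\l(\s)(s,t)=\int_{\D_\infty}\chi^{s,t}\,d\m_{F_\l(\s)}$, I would push $\t$ inside the integral, using that $\t$ acts on the values $\chi^{s,t}(g)\in\Z^\ab$ by the Galois action while fixing the measure's defining integers $c_N$. The second equality is where the cyclotomic character enters: $\t$ acts on $\chi^{s,t}$ through $\t\chi^{s,t}=\chi^{\cyc(\t)s,\cyc(\t)t}$, since $\t$ sends the root of unity $\chi^{s,t}(g)$ to its $\cyc(\t)$-th power, which is exactly $\chi^{\cyc(\t)s,\cyc(\t)t}(g)$. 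Substituting this into the integral representation gives the claimed shift in the arguments.

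The main obstacle I anticipate is bookkeeping the semilinearity carefully in part (i). Because $\iota_*$ is not $\L$-linear but $\L$-semilinear with respect to the swap automorphism, the passage from ``$\iota_*$ acts as $-1$'' to a clean statement about the cocycle matrix requires writing out how $\iota_*$ transforms the defining relation $\s[\a_\l\ \b_\l]=[\a_\l\ \b_\l]M_\l^v(\s)$ and verifying that the swap automorphism on $\L$ commutes appropriately with the $G_k$-action (which it does, since both are determined by the action on roots of unity and $\iota$ is defined over $k$). Once this commutation is pinned down, the trace computation is formal, and part (ii) is then a routine application of the integral formula for $\Phi(s,t)$ together with the definition of $\cyc$.
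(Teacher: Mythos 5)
Your proposal is correct and follows essentially the same route as the paper: part (i) is exactly the paper's argument $F_\l^v(\s)(t,s)=F_\l^{\iota_*v}(\s)(s,t)=F_\l^v(\s)(s,t)$, using that $\iota_*$ is semilinear for the swap of $\x$ and $\y$, commutes with $G_k$, and that Proposition \ref{p4.7} gives basis independence (with the sign from Corollary \ref{c2.9} irrelevant); part (ii) is the same unwinding of $\t g=g^{\cyc(\t)}$ against the measure that the paper dismisses as immediate from the definition. No gaps.
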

\begin{proof}
(i) If $\iota$ is the involution of $X_{N,\l}$ as in Corollary \ref{c2.9}, 
we have
$F_\l^v(\s)(t,s)=F_\l^{\io_*v}(\s)(s,t)=F_\l^v(s,t)$ 
by the independence of a basis. 
(In fact, $F_\l^v\colon G_k \to \L$ already has this symmetry by Corollary \ref{c2.9}.)

(ii) This is immediate from the definition. 
\end{proof}

\begin{epl}
Suppose that $s+t=0$. If $\tau\in G_\Q$ denotes the complex conjugation, we have by Proposition \ref{p3.9} 
\[\tau(F_\l(\s)(s,t))=F_\l(\s)(-s,-t)=F_\l(\s)(t,s)=F_\l(\s)(s,t).\] 
Hence $F_\l(\s)(s,t)\in \wh\Z \ot (\Z^\ab \cap \R)$. 
In particular if  $s\in N^{-1}\Z/\Z$ with $N\in\{2,3,4,6\}$, then $F_\l(\s)(s,-s)\in \wh\Z$. 
\end{epl}

\subsection{Specializations}\label{s3.4}
We describe the Galois action on the homology of the fibers at $\l\in\{0,1\}$ (see Section \ref{s2.7}). 

First, we introduce ``cyclotomic units". 
For $x\in\{\x, \y, \x\y\}$, let 
\[C_x\colon G_\Q \to \L^\times\] 
be the cocycle associated with  the free cyclic $\L$-module $\L(1-x) \subset \L$ and the basis $1-x$, 
i.e. 
\[1-x^{\cyc(\s)}=C_x(\s) (1-x)\quad (\s\in G_\Q).\]
This definition makes sense by the following. 

\begin{lem}
For $x\in\{\x, \y, \x\y\}$, $\L (1-x)$ is a $G_\Q$-stable free cyclic $\L$-module. 
\end{lem}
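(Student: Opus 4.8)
The plan is to deduce both assertions—free cyclicity (equivalently, that $1-x$ is a non-zero-divisor in $\L$) and $G_\Q$-stability—from computations at the finite levels $\wh\Z[\D_N]$, using the same Mittag--Leffler mechanism that drives the proof of Lemma \ref{l3.1}.

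First I would show that $1-x$ is a non-zero-divisor, which at once makes $c\mapsto c(1-x)$ an isomorphism $\L \os\simeq\lra \L(1-x)$, hence gives free cyclicity. Write $x_N$ for the image of $x$ in $\D_N$; for $x\in\{\x,\y,\x\y\}$ the closure of $\langle x\rangle$ is a copy of $\wh\Z(1)$ and $x_N$ generates a cyclic direct factor of $\D_N$ of order $N$ (take the generators $\x_N\y_N,\y_N$ of $\D_N$ in the case $x=\x\y$). In $\Z/n\Z[\D_N]$ the annihilator of $1-x_N$ is exactly the principal ideal generated by the norm element $\nu_{x,N}=\sum_{i\in\Z/N\Z} x_N^i$, because $1-x_N$ is a non-zero-divisor in the polynomial ring before imposing $x_N^N=1$. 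Under the transition map $\D_{M}\to\D_N$ one has $x_M\mapsto x_N$, so $\nu_{x,M}\mapsto (M/N)\,\nu_{x,N}$; thus for $n\mid M/N$ the induced map on annihilators is zero. Exactly as in Lemma \ref{l3.1}, the inverse system of annihilators then has vanishing $\varprojlim$, so any $\Phi\in\L$ with $(1-x)\Phi=0$ must vanish at every finite level, whence $\Phi=0$.

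For $G_\Q$-stability, recall that $\s$ acts on $\L$ by $\wh\Z$-algebra automorphisms with $\s(g)=g^{\cyc(\s)}$ for $g\in\D_\infty$, so $\s(1-x)=1-x^{\cyc(\s)}$, and I must show that $1-x^{u}$ generates the ideal $\L(1-x)$ for every $u\in\wh\Z^\times$. The key point is that for arbitrary $w\in\wh\Z$ the ``quotient'' $\tfrac{1-x^w}{1-x}$ lies in $\L$: choosing positive integers $n_i\to w$ in $\wh\Z$, the partial sums $S_{n_i}=1+x+\cdots+x^{n_i-1}$ form a Cauchy sequence, since a difference $S_{n_i}-S_{n_j}=x^{n_j}S_{n_i-n_j}$ reduces at each level $\Z/n\Z[\D_N]$ to a multiple of $\nu_{x,N}$ whose coefficient is divisible by $(n_i-n_j)/N$, hence $\equiv 0 \bmod n$ as soon as $n_i\equiv n_j \pmod{Nn}$. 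The limit $S_w:=\lim_i S_{n_i}\in\L$ then satisfies $(1-x)S_w=1-x^w$ by continuity of multiplication.

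Finally, applying this for $u=\cyc(\s)\in\wh\Z^\times$ to the pair $(x,u)$ and to the pair $(x^u,u^{-1})$ yields elements $S_u,S'_{u^{-1}}\in\L$ with $(1-x)S_u=1-x^u$ and $(1-x^u)S'_{u^{-1}}=1-x$; multiplying these and invoking that $1-x$ is a non-zero-divisor gives $S_uS'_{u^{-1}}=1$, so $S_u\in\L^\times$. Hence $1-x^{\cyc(\s)}=C_x(\s)(1-x)$ with $C_x(\s):=S_{\cyc(\s)}\in\L^\times$, proving $G_\Q$-stability and simultaneously that the cocycle $C_x$ is well defined. I expect the convergence step to be the main obstacle: because $1-x$ is not invertible—indeed a zero-divisor—at each finite level, one cannot infer the convergence of $S_{n_i}$ merely from that of $(1-x)S_{n_i}=1-x^{n_i}$, and must instead control $S_{n_i}$ directly via the explicit norm-element form of its differences, which is precisely the phenomenon making the relevant $\varprojlim^1$ vanish in Lemma \ref{l3.1}.
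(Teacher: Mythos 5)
Your proof is correct and takes essentially the same route as the paper: both parts rest on the finite-level computation that the annihilator of $1-x_N$ is the norm ideal, which is killed by the transition maps (giving freeness), and both realize the multiplier for $\s(1-x)$ as a truncated geometric sum $1+x+\cdots+x^{c-1}$ with $c$ a positive-integer approximation of $\cyc(\s)$. The differences are only presentational: the paper packages both steps through the isomorphism of Lemma \ref{l3.1}, working modulo $I_N$, whereas you work directly in $\L$ via Cauchy sequences, and your extra verification that the multiplier lies in $\L^\times$ is a bonus that the paper obtains formally from freeness together with stability.
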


\begin{proof}
By symmetry, we can assume $x=\x$. 
Suppose that $f (1-\x) =0$ ($f \in \L$) and for each $N$, let $f_N \in \wh\Z[\D_N]$ be the image of $f$. 
Since the annihilator of $1-\x_N$ in $\wh\Z[\D_N]$ is the ideal generated by $\sum_{i\in\Z/N\Z} \x_N^i$, 
it follows that $f_N \in  \wh\Z[\D_N] I_N$. 
By Lemma \ref{l3.1}, we obtain $f=0$, i.e. the freeness. For $\s \in G_\Q$, let $c=\cyc(\s) \in \wh\Z^\times$ and for each $N$, $c_N$ be a positive integer such that $c\equiv c_N\pmod{N}$.   Then  
\[\s(1-\x_N)=1-\x_N^{c_N}=\left(\sum_{i=0}^{c_N-1} \x_N^i\right) (1-\x_N) .\]
Since $f_N:=\sum_{i=0}^{c_N-1} \x_N^i$ is well-defined in $\Z[\D_N]/I_N$ independent of the choice of $c_N$ 
and $f:=(f_N)$ is an element of $\varprojlim \Z[\D_N]/I_N$, 
we  have $\s(1-\x)=f (1-\x)$ in $\L$ by Lemma \ref{l3.1}, hence the $G_\Q$-stability is proved. 
\end{proof}

Secondly, we recall Anderson's adelic beta function \cite{anderson} (cf. \cite[Section 7]{otsubo1}). 
Let $Y_N=C_N\cap\{z_0\ne 0\}$ be the affine Fermat curve (see Section \ref{s2.7}) and put $W_N=Y_N\cap\{x_0y_0=0\}$. 
Then the relative homology 
\[H_1(Y_\infty,W_\infty;\wh\Z):=\varprojlim_N H_1(Y_N,W_N;\wh\Z)\] 
is a free cyclic $\L$-module generated by $\d_\infty=(\d_N)_N$, where $\d_N$ is the specialization of the path $\d_{N,\e}$ (see Section \ref{s2.3}) to $\l=1$.  
Then the cocycle 
\[B\colon G_\Q \to \L^\times\]
associated with the basis $\d_\infty$, which can be viewed as a function $G_\Q \times (\Q/\Z)^{\oplus 2} \to \wh\Z \ot \Z^\ab$, is the adelic beta function. Its $l$-adic component for a prime number $l$, i.e. the image under $\L \to \Z_l[[\D_{l^\infty}]]$ (see Remark \ref{r3.1}), agrees Ihara's $l$-adic beta function \cite{ihara}.

\begin{thm}\label{t3.12}  \
\begin{enumerate}
\item
If $B_0\colon G_\Q \to \L^\times$ denotes the cocycle associated with the basis $\sp_0(\b_{\l})$ of $H_1(X_{\infty,0},\wh\Z)$, then 
\[B_0=C_\x C_\y.\]
\item
If $A_1 \text{ (resp. $B_1$)} \colon G_\Q \to \L^\times$ denotes the cocycle associated with the basis $\sp_1(\a_{\l})$ 
(resp. $\sp_1(\b_\l)$) of $H_1(X_{\infty,1},\wh\Z)$ (resp. $H_1(C_{\infty},\wh\Z)$), then  
\[C_{\x\y}A_1=B_1=C_\x C_\y B.\]
\end{enumerate}
\end{thm}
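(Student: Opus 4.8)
The plan is to treat all three cocycles uniformly. Each is attached to a free cyclic (or cyclic) $\L$-module equipped with an explicit generator, so in every case I would compute the cocycle by writing the Galois transform of the generator as an explicit $\L$-multiple of itself. The only formal inputs required are the semilinearity $\s(f\g)=(\s f)(\s\g)$ of the $G_\Q$-action, the identities $\s(1-x)=1-x^{\cyc(\s)}=C_x(\s)(1-x)$ for $x\in\{\x,\y,\x\y\}$, and the structural results of Theorems \ref{t2.5} and \ref{t2.4} (equivalently Corollary \ref{c4.2}) that express the relevant classes in terms of one another.

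For (i), I would first promote the isomorphism $H_1(X_{\infty,0},\wh\Z)\simeq\L(1-\x)(1-\y)$ of Theorem \ref{t2.5}(i) to an isomorphism of $G_\Q$-modules, the target carrying the restriction of the natural $G_\Q$-action on $\L$. Under the identification of the fundamental class of the node $P_{i,j}=(\z_N^i,\z_N^j)$ with $\x_N^i\y_N^j$, the element $\s$ sends $P_{i,j}$ to $P_{\cyc(\s)i,\cyc(\s)j}$, hence acts on $\Z[\D_N]$ by $\x_N\mapsto\x_N^{\cyc(\s)}$, $\y_N\mapsto\y_N^{\cyc(\s)}$; since the fundamental class of a point carries no Tate twist, no extra cyclotomic factor appears and the identification is equivariant. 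As $\sp_0(\b_\l)$ corresponds to $\pm(1-\x)(1-\y)$ by Theorem \ref{t2.5}(ii), I then compute
\[\s\,\sp_0(\b_\l)\;\longleftrightarrow\;(1-\x^{\cyc(\s)})(1-\y^{\cyc(\s)})=C_\x(\s)C_\y(\s)\,(1-\x)(1-\y),\]
which yields $B_0=C_\x C_\y$ (the sign cancels in the cocycle).

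For (ii), both equalities have the same shape. For $C_{\x\y}A_1=B_1$, I would use the relation $\sp_1(\b_\l)=-(1-\x\y)\sp_1(\a_\l)$ of Theorem \ref{t2.4}(ii): applying $\s$ and semilinearity gives $\s\,\sp_1(\b_\l)=-(1-(\x\y)^{\cyc(\s)})A_1(\s)\sp_1(\a_\l)=C_{\x\y}(\s)A_1(\s)\sp_1(\b_\l)$, using that the inclusion $H_1(C_\infty,\wh\Z)\hookrightarrow H_1(X_{\infty,1},\wh\Z)$ is $G_\Q$-equivariant. For $B_1=C_\x C_\y B$, I would identify $\sp_1(\b_\l)$ with the closed-up Anderson cycle: since $\b_{N,\l}=((1-\x_N)(1-\y_N))_*\d_{N,\l}$ and $\d_\infty$ is by definition the specialization of $\d_{N,\l}$ to $\l=1$, the class $\sp_1(\b_\l)$ equals $\pm(1-\x)(1-\y)\d_\infty$ under the $G_\Q$-equivariant passage from relative to absolute homology. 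Applying $\s$ together with $\s\d_\infty=B(\s)\d_\infty$ then gives $\s\,\sp_1(\b_\l)=\pm(1-\x^{\cyc(\s)})(1-\y^{\cyc(\s)})B(\s)\d_\infty=C_\x(\s)C_\y(\s)B(\s)\sp_1(\b_\l)$, as desired.

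I expect the genuine content to lie not in these short manipulations but in the two $G_\Q$-equivariant identifications on which they rest. In (i), the point is that the combinatorial description of $H_1(X_{\infty,0},\wh\Z)$ carries the untwisted permutation action of $G_\Q$ on $\Z[\D_N]$, so that no spurious power of $\cyc(\s)$ enters $B_0$. In (ii), the point is that $\sp_1(\b_\l)$ really matches $(1-\x)(1-\y)\d_\infty$ with $\d_\infty$ exactly Anderson's generator of $H_1(Y_\infty,W_\infty;\wh\Z)$. Verifying these compatibilities—tracking the specialization of the path $\d_{N,\l}$ to $\l=1$ and its closing-up to the Fermat cycle, all equivariantly for $G_\Q$—is the main obstacle; once they are in place, the cocycle computations are immediate.
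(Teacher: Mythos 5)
Your proposal is correct and follows essentially the same route as the paper: part (i) via the $G_\Q$-equivariant identification $H_1(X_{\infty,0},\wh\Z)\simeq\L(1-\x)(1-\y)$ sending $\sp_0(\b_\l)$ to $(1-\x)(1-\y)$, and part (ii) via the two relations $\sp_1(\b_\l)=-(1-\x\y)\sp_1(\a_\l)=(1-\x)(1-\y)\d_\infty$ together with semilinearity. The equivariance checks you flag as the real content (no Tate twist on the $H_0$ of the nodes, and the matching of $\sp_1(\b_\l)$ with Anderson's generator) are exactly what the paper leaves implicit in its appeals to Theorems \ref{t2.5} and \ref{t2.4}.
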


\begin{proof}
By Theorem \ref{t2.5} and its proof, we have an isomorphism of $\L$-modules 
\[H_1(X_{\infty,0},\wh \Z) \simeq \L(1-\x)(1-\y); \quad \sp_0(\b_\l) \mapsto (1-\x)(1-\y),\]
hence (i) follows.
By Theorem \ref{t2.4} and the definition of $\b_\l$, we have 
\[-(1-\x\y)\sp_1(\a_\l) = \sp_1(\b_\l)= (1-\x)(1-\y)\d\] 
in $H_1(U_\infty,W_\infty;\wh\Z)$, hence (ii) follows.  
\end{proof}

The statement (ii) can be seen as an adelic analogue of the Euler-Gauss summation formula \eqref{e0.1} and its finite analogue \eqref{e0.2}.

\section{Interpolation}
\subsection{Hypergeometric functions over finite fields}\label{s4.1} 

Let us recall from \cite{otsubo2} the definition and some properties of hypergeometric functions over finite fields. 
See \cite[Remark 2.12]{otsubo2} for definitions given by other authors and their relations with ours. 
Let $\k$ be a finite field of characteristic $p$ with $q$ elements. 
Fix a non-trivial additive character $\psi \in \Hom(\k,\C^\times)$, e.g. 
$\psi(x)=\be(\mathrm{Tr}_{\k/\F_p}(x)/p)$ where $\mathrm{Tr}_{\k/\F_p}\colon\k\to\F_p$ is the trace map. 
Let 
\[\ck :=\Hom(\k^\times,\C^\times)\]
be the group of multiplicative characters of $\k$, and $\e\in\ck$ be the unit character. 
We write $\ol\vp=\vp^{-1}$ and set $\vp(0)=0$ for any $\vp\in\ck$.

For $\vp \in \ck$, define the {\it Gauss sum} and its variant by 
\[g(\vp)=-\sum_{x\in\k} \psi(x)\vp(x), \quad g^\0(\vp)
=\begin{cases} g(\vp) & (\vp\ne\e), \\ q & (\vp=\e). \end{cases}\]
Note that $g(\e)=1$. 
Then we have for any $\vp\in\ck$
\begin{equation}\label{reflection-fin}
g(\vp)g^\0(\ol\vp)=q \vp(-1)
\end{equation}
 (cf. \cite[Proposition 2.2 (ii)]{otsubo2}). 
For $\vp_1$, $\vp_2\in\ck$, the {\it Jacobi sum} is defined by 
\[j(\vp_1,\vp_2)=-\sum_{x,y\in\k, x+y=1} \vp_1(x)\vp_2(y).\]
Then we have, 
\begin{equation}\label{j1}
j(\vp_1,\vp_2)= \frac{g(\vp_1)g(\vp_2)}{g^\0(\vp_1\vp_2)}
\end{equation}
unless $\vp_1=\vp_2=\e$  (cf. \cite[Proposition 2.2 (iv)]{otsubo2}).

For $\a, \n \in \ck$, define the Pochhammer symbol and its variant by 
\[(\a)_\n=\frac{g(\a\n)}{g(\a)}, \quad (\a)^\0_\n=\frac{g^\0(\a\n)}{g^\0(\a)}.\]
For $\a, \b, \g  \in \ck$, the Gaussian hypergeometric function over $\k$ is defined by 
\[F(\a,\b;\g;\l)= \frac{1}{1-q} \sum_{\n \in \ck}  \frac{(\a)_\n(\b)_\n}{(\e)^\0_\n(\g)^\0_\n} \n(\l) \quad (\l\in \k).\]
It takes values in $\Q(\m_{q-1})$ and is independent of the choice of $\psi$ \cite[Definition 2.7]{otsubo2}. 
Note that $F(\a,\b;\g;0)=0$ by definition.   

For simplicity, we assume from now on that $\a,\b\not\in \{\e,\g\}$. 
Then we have a finite analog of the integral representation \eqref{hgf-i} (see \cite[Corollary 3.5 (i)]{otsubo2}) 
\begin{equation}\label{e3.3}
-j(\b, \ol\b\g) F(\a,\b;\g;\l)= \sum_{t\in\k}  \ol\a(1-\l t) \b(t) \ol\b\g(1-t).
\end{equation}
From this follows the finite analogue of the Euler-Gauss summation formula \eqref{euler-gauss} (see  \cite[Theorem 4.3]{otsubo2}) 
\begin{equation}\label{euler-gauss-fin}
F(\a,\b;\g;1)=
\frac{g^\0(\g)g(\ol{\a\b}\g)}{g(\ol\a\g)g(\ol\b\g)}. 
\end{equation}
In particular if $\g=\e$, letting $u=\frac{1}{1-\l t}$ and $t=\frac{v}{v-1}$ in \eqref{e3.3} and using $j(\b,\ol\b)=\b(-1)$ by \eqref{reflection-fin} and \eqref{j1}, we have 
\begin{equation}\label{hgf-fin-i2}
F(\a,\b;\e;\l)= -\sum_{u,v \in \k, (1-u)(1-v)=\l uv} \a(u)\b(v). 
\end{equation}
In particular, $F(\a,\b;\e;\l) \in \Z[\m_{q-1}]$. Letting $\l=1$, we obtain \eqref{e0.2}. 

\begin{rmk}
The parameter set $\ck \simeq (q-1)^{-1}\Z/\Z$ is an analogue of the rational numbers with denominator dividing $q-1$. 
An integral shift of complex parameters, which corresponds to a first order differential operation (see for example \eqref{hgf-d}), makes no change over finite fields. 
Finite analogues of linearly independent solutions of \eqref{hgf-de} are proportional; for example
\[F(\a,\b;\e;\l)= j(\a,\b) F(\a,\b;\a\b;1-\l)\]
for any $\l \ne 0, 1$ and $\a, \b \ne \e$ (see \cite[Theorem 3.15]{otsubo2}). 
\end{rmk}

\subsection{Number of rational points on $X_{N,\l}$}\label{s4.2}

Let $\k$ be as above, $\ol\k$ be an algebraic closure of $\k$, and $\Fr \in \mathrm{Gal}(\ol\k/\k)$ be the $q$th power Frobenius element. 
For a smooth variety $X$ over $\k$ equipped with an action (over $\k$) of a finite abelian group $\D$, 
the number of $\k$-rational points on $X$ decomposes as 
\begin{equation*}
\# X(\k) = \sum_{\chi \in \wh \D} N(X,\chi),
\end{equation*}
where
\[N(X,\chi)=\frac{1}{\# \D} \sum_{g \in \D} \chi(g) \# \{P \in X(\ol\k) \mid \Fr(P)=gP\}.\]
By the Grothendieck-Lefschetz trace formula, we have 
\begin{equation}\label{e3.1} 
N(X,\chi)=\sum_{i=0}^{2 \dim X} (-1)^i \tr\bigl(\Fr^{-1}; H^i(X)(\chi)\bigr),
\end{equation}
where 
\[H^i(X):=H_{\et,c}^i(X \ot_\k\ol\k,\ol\Q_l)\]
is the $l$-adic \'etale cohomology with compact support for a prime $l\ne p$, and its $\chi$-part $H^i(X)(\chi)$ is defined similarly as in Section \ref{s2.2}.
Here we fixed an embedding $\ol\Q \hookrightarrow \ol\Q_l$ and viewed $\chi$ as a $\ol\Q_l$-valued character.  

Now assume that $q\equiv 1 \pmod{N}$, so that $\k$ contains all the $N$th roots of unity. 
Let $\l\in \k-\{0,1\}$  and  $X_{N,\l}$ be the hypergeometric curve over $\k$ defined as in Section \ref{s2.1}. 
Fix an isomorphism 
\[\io \colon \mu_N(\C) \os\simeq\lra \mu_N(\k)\]
 and let $\D_N$ act on $X_{N,\l}$ over $\k$ through $\io$. 
Then the number of rational points on $X_{N,\l}$ is written in terms of hypergeometric functions over $\k$ as follows. 
Define $\vp_N \in \wh{\k^\times}$ of exact order $N$ by $\vp_N(x)= \io^{-1} (x^\frac{q-1}{N})$. 

\begin{thm}\label{t3.2} 
Suppose that $a, b \ne 0$. Then we have 
\[\tr (\Fr^{-1}; H^1(X_{N,\l})(\chi_N^{a,b})) =-N(X_{N,\l};\chi_N^{a,b})=F(\vp_N^a,\vp_N^b;\e;\l)\]
in $\Q(\m_N)$.
\end{thm}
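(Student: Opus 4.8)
The statement is a chain of two equalities, and the plan is to obtain the left one from the Grothendieck-Lefschetz trace formula \eqref{e3.1} and the right one from a Weil-style point count.

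For the first equality, recall that $X_{N,\l}$ is a smooth projective curve, so $H^i(X_{N,\l})$ is concentrated in degrees $0,1,2$. Since the curve is geometrically connected and the $\D_N$-action fixes it, $\D_N$ acts trivially on $H^0(X_{N,\l})$ (the constants) and on $H^2(X_{N,\l})$ (the fundamental class). Hence the $\chi_N^{a,b}$-eigenspaces of $H^0$ and $H^2$ vanish for $(a,b)\neq(0,0)$, and since $a,b\neq 0$ the sum in \eqref{e3.1} collapses to the single term $i=1$, giving $N(X_{N,\l};\chi_N^{a,b})=-\tr(\Fr^{-1};H^1(X_{N,\l})(\chi_N^{a,b}))$. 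This is the first equality.

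For the second equality I would expand $N(X_{N,\l};\chi_N^{a,b})=\tfrac{1}{N^2}\sum_{g\in\D_N}\chi_N^{a,b}(g)\,\#\{P\mid\Fr(P)=gP\}$ and reorganize it as $\sum_P\chi_N^{a,b}(g_P)$, where for each $\ol\k$-point $P$ lying over a $\k$-rational point of the quotient $X_{N,\l}\to Y$, $(x,y)\mapsto(u,v)=(x^N,y^N)$, the element $g_P$ is the unique one with $\Fr(P)=g_P P$. Such a $g_P$ exists and depends only on $(u,v)$ because $q\equiv 1\pmod N$ makes the $\D_N$-action $\k$-rational, so that $\Fr(hP)=h\,\Fr(P)$ for $h\in\D_N$. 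The affine quotient $Y$ has model $(1-u)(1-v)=\l uv$, which is exactly the summation set of \eqref{hgf-fin-i2}. The key local computation is that over a point $(u,v)$ with $u,v\neq 0$ the fiber has $N^2$ points, and from $x^{q-1}=(x^N)^{(q-1)/N}=u^{(q-1)/N}$ together with $\vp_N(u)=\io^{-1}(u^{(q-1)/N})$ one reads off $g_{(u,v)}=\x_N^i\y_N^j$ with $\z_N^i=\vp_N(u)$, $\z_N^j=\vp_N(v)$, whence $\chi_N^{a,b}(g_{(u,v)})=\vp_N^a(u)\vp_N^b(v)$. After dividing by $N^2$ these fibers contribute $\sum_{u,v\neq 0}\vp_N^a(u)\vp_N^b(v)$, which equals the full double sum in \eqref{hgf-fin-i2} because $\vp_N^a$ and $\vp_N^b$ vanish at $0$; this is $-F(\vp_N^a,\vp_N^b;\e;\l)$.

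What remains is to show that the degenerate loci contribute nothing to the $\chi_N^{a,b}$-component: the affine branch points, where $x=0$ or $y=0$, and the $2N$ points at infinity with $x^N=(1-\l)^{-1}$ or $y^N=(1-\l)^{-1}$. The plan is to observe that each such point has stabilizer containing $\langle\x_N\rangle$ or $\langle\y_N\rangle$ and is Frobenius-compatible, so summing $\chi_N^{a,b}$ over the relevant coset produces a factor $\sum_i\z_N^{ai}=0$ or $\sum_j\z_N^{bj}=0$, using precisely the hypotheses $a\neq 0$ and $b\neq 0$. I expect this bookkeeping of the degenerate strata to be the main obstacle: the identification $\chi_N^{a,b}(g_{(u,v)})=\vp_N^a(u)\vp_N^b(v)$ is routine once set up, but the delicate point is tracking the vanishing at $u=0$, $v=0$ and at infinity consistently with the convention $\vp(0)=0$, which is exactly what reconciles the geometric count with the analytic sum \eqref{hgf-fin-i2}.
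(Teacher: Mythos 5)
Your proposal is correct and follows essentially the same route as the paper: the first equality comes from the Grothendieck--Lefschetz formula \eqref{e3.1} once one notes that $\D_N$ acts trivially on $H^0$ and $H^2$, and the second from the point count on the affine part via $u=x^N$, $v=y^N$ together with the observation that the ramification points and the $2N$ points at infinity contribute a factor $\sum_i \z_N^{ai}=0$ or $\sum_j \z_N^{bj}=0$. Your treatment of the degenerate strata is slightly more explicit than the paper's (which absorbs the $x=0$, $y=0$ points into the sum via the convention $\vp(0)=0$), but this is only a presentational difference.
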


\begin{proof}
We have 
\begin{align*}
H^i(X_{N,\l}) &=H^i(X_{N,\l})(\chi_N^{0,0}) \quad (i=0,2), \\
H^1(X_{N,\l}) & =\bigoplus_{a, b \ne 0} H^1(X_{N,\l})(\chi_N^{a,b}). 
\end{align*}
The first follows since $\D_N$ acts trivially on $H^i(X_{N,\l})$ for $i=0, 2$. 
The second is proved for example by reducing to characteristic $0$ and use Proposition \ref{p2.2}. 
Hence the first equality of the theorem follows by \eqref{e3.1}. 
To prove the second equality, let $Y_{N,\l}=\A^2\cap  X_{N,\l}$ be the affine part. 
For $(x,y) \in Y_{N,\l}(\ol\k)$, consider the condition 
$\Fr(x,y)=(\z_N^i,\z_N^j)(x,y)$.  
If we put $u=x^N$, $v=y^N$, then $(1-u)(1-v)=\l uv$, and the condition is  
equivalent to that $u, v \in \k^\times$, $\vp_N(u)=\z_N^i$ and $\vp_N(v)=\z_N^j$. 
Therefore, 
\[N(Y_{N,\l};\chi_N^{a,b})= \sum_{u, v \in \k,  (1-u)(1-v)=\l uv} \vp_N^a(u)\vp_N^b(v)=-F(\vp_N^a,\vp_N^b;\e;\l)\]
by \eqref{hgf-fin-i2}. 
Let $P=(x,\infty) \in X_{N,\l}(\ol\k)$ be a point at infinity, i.e. $x^N=(1-\l)^{-1}$. 
Then the condition $\Fr(P)=(\z_N^i,\z_N^j)P$ is equivalent to $\vp_N((1-\l)^{-1})=\z_N^i$, which is independent of $x$ and $j$. Since $\sum_j \z_N^{bj}=0$,  these points have no contribution to $N(X_{N,\l};\chi_N^{a,b})$, 
and similarly for the points $(\infty, y)$ ($y^N=(1-\l)^{-1}$). 
Hence $N(X_{N,\l};\chi_N^{a,b})=N(Y_{N,\l};\chi_N^{a,b})$ and the theorem is proved. 
\end{proof}

\begin{rmk}
The proof as above is a generalization of Weil's point counting on the Fermat curves \cite{weil}.  
By counting points on the curves $C_{N,\l}^{a,b}$ (see Remark \ref{r1}) and their higher-dimensional analogues, Koblitz \cite{koblitz} arrived at the first definition of hypergeometric functions of type ${}_{d+1}F_d$ over finite fields. 
\end{rmk}

\subsection{Interpolation}
 
Let $k$ be a number field containing $\mu_N$. 
For a prime $v \nmid N$ of $k$, let $\k_v$ denote the residue field of $v$ and put $q_v=\# \k_v$.  Let $\vp_{N,v} \in \wh{\k_v^\times}$ be the $N$th power residue character at $v$, i.e. 
\[\vp_{N,v}(x)\equiv x^\frac{q_v-1}{N} \pmod v.\]
For a prime number $l$, let 
\[\pi_l\colon \wh\Z \ot \Z^\ab \to \Z_l \ot \Z^\ab\]  
denote the projection. 

Recall that the adelic beta function interpolates all the Jacobi sums \cite[Section 10]{anderson}, \cite[II, Theorem 7]{ihara} (see also \cite[Section 8]{otsubo1}). 
If $\s_v \in G_k$ is a lift of Frobenius at $v$, then 
\begin{equation}\label{e4.14}
\pi_l\left(B(\s_v)\left(\frac{a}{N},\frac{b}{N}\right)\right)= 1 \ot j(\vp_{N,v}^a,\vp_{N,v}^b)
\end{equation}
for any $a, b \in \Z/N\Z-\{0\}$ with $a+b \ne 0$, and a prime $l\nmid q_v$. 

The adelic hypergeometric function interpolates the hypergeometric functions over finite fields. 

\begin{thm}\label{t4.8}
Suppose that $\l \in L(k)$ is integral at $v$ and $v\nmid \l(1-\l)N$. 
If $\s_v \in G_k$ is a lift of Frobenius at $v$ and $a, b \in \Z/N\Z-\{0\}$, then 
\[\pi_l\left(F_\l(\s_v)\left(\frac{a}{N},\frac{b}{N}\right)\right)= 1\ot  F\left(\vp_{N,v}^a, \vp_{N,v}^b;\e; \l \text{ {\rm mod} } v\right)\]
for any prime number $l\nmid q_v$. 
(See Theorem \ref{t4.14} below for the cases when $a=0$ or $b=0$.) 
\end{thm}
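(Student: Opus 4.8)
The plan is to trace through the definitions so that $\pi_l\bigl(F_\lambda(\s_v)(a/N,b/N)\bigr)$ becomes a Frobenius trace on a single two-dimensional eigencomponent of the $l$-adic \'etale homology of $X_{N,\lambda}$, and then to invoke good reduction to land on Theorem \ref{t3.2}.

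First I would unwind the evaluation map. By the definition of $\Phi\mapsto\Phi(s,t)$ and Proposition \ref{p4.7}, the quantity in question is the image of the basis-independent element $\tr M_\lambda(\s_v)$ under $\L\to\wh\Z[\D_N]\xrightarrow{\chi_N^{a,b}}\wh\Z\ot\Z[\mu_N]$ followed by $\pi_l$. Since $k\supseteq\mu_N$, every $\s\in G_k$ satisfies $\cyc(\s)\equiv1\pmod N$, so $\s_v$ acts $\wh\Z[\D_N]/I_N$-linearly on $H_1(X_{N,\lambda},\wh\Z)$; hence this image is exactly the trace of $\s_v$ on the eigencomponent $H_1^\et(X_{N,\lambda},\Z_l)(\chi_N^{a,b})$, which is free of rank two over $\Z_l\ot\Z[\mu_N]$ by Corollary \ref{c4.2} (i) (equivalently, the rank is two by Proposition \ref{p2.2}). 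Throughout I read the Betti homology as $l$-adic \'etale homology carrying the $G_k$-action via Artin's comparison isomorphism.

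Next, the hypothesis $v\nmid\lambda(1-\lambda)N$ makes $X_{N,\lambda}$ smooth and proper over the local ring at $v$, so by smooth proper base change $H^1_\et(X_{N,\lambda}\ot\ol\Q,\Z_l)$ is unramified at $v$ and canonically isomorphic, compatibly with the decomposition group and with the $\D_N$-action, to $H^1_\et(X_{N,\lambda\bmod v}\ot\ol\k_v,\Z_l)$, on which $\s_v$ acts as the arithmetic Frobenius; in particular the trace is independent of the chosen lift. The genuinely delicate step is then to reconcile conventions on the two-dimensional eigencomponent: unwinding $H_1^\et=H^1_\et(-,\Z_l(1))$ contributes a Tate twist on which $\s_v$ acts by $q_v$; the homology action $g_*=(g^{-1})^*$ flips the relevant cohomological character from $\chi_N^{a,b}$ to $\chi_N^{-a,-b}$; and $\s_v$ acts on cohomology as the inverse of the geometric Frobenius $\Fr^{-1}$. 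I would check that these three effects combine, via the Poincar\'e duality pairing relating $H^1(\chi_N^{a,b})$ and $H^1(\chi_N^{-a,-b})$ with a Tate twist, to return precisely $\tr(\Fr^{-1};H^1(X_{N,\lambda\bmod v})(\chi_N^{a,b}))$. Concretely, writing $\alpha_1,\alpha_2$ for the geometric Frobenius eigenvalues on $H^1(\chi_N^{a,b})$, Poincar\'e duality gives eigenvalues $q_v/\alpha_1,q_v/\alpha_2$ on the paired space $H^1(\chi_N^{-a,-b})$; passing to the arithmetic Frobenius (the inverse) yields $\alpha_1/q_v,\alpha_2/q_v$, and the Tate-twist factor $q_v$ restores $\alpha_1,\alpha_2$, so the trace of $\s_v$ on $H_1^\et(\chi_N^{a,b})$ is $\alpha_1+\alpha_2$.

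Finally, Theorem \ref{t3.2} identifies $\alpha_1+\alpha_2=\tr(\Fr^{-1};H^1(X_{N,\lambda\bmod v})(\chi_N^{a,b}))$ with $F(\varphi_{N,v}^a,\varphi_{N,v}^b;\e;\lambda\bmod v)$, where the $N$th power residue character $\varphi_{N,v}$ is matched with the evaluation character under the fixed embedding $\ol\Q\hookrightarrow\ol\Q_l$ and the isomorphism $\iota$ of Section \ref{s4.2}; assembling the chain of equalities proves the identity. The main obstacle I anticipate is exactly the bookkeeping in the third paragraph, namely tracking the Tate twist, the arithmetic-versus-geometric Frobenius, and the $(a,b)\mapsto(-a,-b)$ character flip and confirming their cancellation through Poincar\'e duality; everything else follows formally from the definitions, good reduction, and the structural results already established.
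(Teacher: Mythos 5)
Your proposal is correct and follows essentially the same route as the paper: reduce to the trace of $\s_v$ on the $\chi_N^{a,b}$-eigencomponent of $l$-adic homology, use good reduction, and pass through Poincar\'e duality to identify this with $\tr(\Fr^{-1};H^1(X_{N,\l})(\chi_N^{a,b}))$, which Theorem \ref{t3.2} evaluates. The paper packages your third-paragraph bookkeeping (Tate twist, character flip, arithmetic versus geometric Frobenius) into the single observation that the pairing \eqref{e4.1} is $G_k$-invariant, so the matrix of $\s_v$ on $H_1(\chi_N^{a,b})$ is that of $\s_v^{-1}$ on the dual $H^1(\chi_N^{a,b})$ up to conjugation; your explicit eigenvalue cancellation is a correct unwinding of the same fact.
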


\begin{proof}
By the assumption $v \nmid \l(1-\l)N$, the model of $X_{N,\l}$ over $k$ has good reduction at $v$. 
By the proper and smooth base change theorems \cite[Arcata, V, 3]{4demi}, 
the $l$-adic \'etale (co)homology of $X_{N,\l}$ agrees that of the reduction and the inertia subgroup $I_v \subset G_k$ of $v$ acts on it trivially.  
Under the Poincar\'e duality paring \eqref{e4.1} with $\ol\Q_l$-coefficients, 
$H_1(X_{N,\l})(\chi_N^{a,b})$ is dual to $H^1(X_{N,\l})(\chi_N^{a,b})$. 
Since the matrix $M_\l^v(\s_v)$ (see Definition \ref{d4.1}) representing $\s_v$ on the former is the matrix representing $\s_v^{-1}$ on the latter up to conjugates, 
the theorem follows by Theorem \ref{t3.2}.
\end{proof}

\begin{rmk}\label{r4.5}
A motivic proof of Theorem \ref{t4.8} will be given in a forthcoming paper of the second author, 
where the Frobenius endomorphism of the motive of $X_{N,\l}$ over a finite field is expressed in terms of a ``hypergeometric algebraic correspondence''.  
\end{rmk}

\subsection{Transformation and summation formulas}\label{s4.4}

Recall the transformation formulas of Euler and Pfaff for complex hypergeometric functions (cf. \cite[2.1.4]{erdelyi}, \cite[Section 2.4]{otsubo4}):
\begin{align*}
& F(a,b;c;\l)= (1-\l)^{c-a-b} F(c-a,c-b;c;\l),
\\ & F(a,b;c;\l)=(1-\l)^{-a} F\left(a,c-b;c;\frac{\l}{\l-1}\right).
\end{align*}
Their finite field analogues are (cf. \cite[Theorem 3.14]{otsubo2}):  
\begin{align}
&F(\a,\b;\g;\l)=\ol{\a\b}\g(1-\l) F(\ol\a\g,\ol\b\g;\g;\l), \label{e4.3}
\\
&F(\a,\b;\g;\l)=\ol\a(1-\l)F\left(\a,\ol\b\g;\g;\frac{\l}{\l-1}\right), \label{e4.4}
\end{align}
where $\a, \b \not\in\{\e,\g\}$, $\l\in\k-\{1\}$. 

To state the adelic analogues, we introduce some notations. 
Let 
\[k^\times \to H^1(G_k,\m_N)\] 
be the Kummer map and write the image of $\l$ by $K_{\l,N}$. 
The restriction of $K_{\l,N}$ to $G_{k(\mu_N)}$ is a homomorphism. 
For any $s=a/N \in N^{-1}\Z/\Z$, define a Galois character as
\[K_\l^s\colon G_{k(\m_N)} \to \m_N; \quad K_\l^s(\s) = K_{\l,N}(\s)^a, \]
which is independent of the presentation of $s$. 
If $v \nmid N$ is a prime of $k(\mu_N)$ and $\l$ is a $v$-unit, then we have by definition
\begin{equation}\label{e4.5}
K_\l^\frac{a}{N} (\s_v)=\vp_{N,v}^a(\l \text{ mod } v)
\end{equation}
for any lift $\s_v$ of Frobenius at $v$. 

The adelic analogues of the Euler and Pfaff formulas are as follows. 
\begin{thm}\label{t4.9}
As functions on $\sL$, we have 
\begin{align*}
& F_\l(\s)(s,t)=K_{1-\l}^{-s-t}(\s) F_\l(\s)(-s,-t), 
\\
& F_\l(\s)(s,t)=K_{1-\l}^{-s}(\s) F_{\frac{\l}{\l-1}}(\s)(s,-t). 
\end{align*}
\end{thm}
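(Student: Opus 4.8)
The plan is to deduce both identities from their finite-field counterparts \eqref{e4.3} and \eqref{e4.4} (specialized to $\g=\e$) via the Chebotarev density theorem, as announced in the introduction. Fix $N$ with $(s,t)=(a/N,b/N)$, $a,b\not\equiv 0 \pmod N$, and regard $k(\mu_N)$ as the base number field, so that $\l,\,\tfrac{\l}{\l-1}\in L(k(\mu_N))$ and $\s$ ranges over $G_{k(\mu_N)}$. Since both sides of each asserted equality lie in $\wh\Z\ot\Z[\mu_N]$, which injects into $\prod_l(\Z_l\ot\Z[\mu_N])$ with each factor $l$-adically separated, it suffices to prove equality after applying $\pi_l$ and reducing modulo $l^m$, for every prime $l$ and every $m\ge 1$.

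The enabling observation is that, for $\s\in G_{k(\mu_N)}$, the function $\s\mapsto F_\l(\s)(s,t)$ is the trace of a genuine representation. Indeed $\cyc(\s)\equiv 1\pmod N$, so $\s$ acts trivially on $\D_N$ and hence coefficientwise trivially on $\wh\Z[\D_N]/I_N$; therefore the image $\bar M_\l$ of the cocycle $M_\l^v$ under $\L\to\wh\Z[\D_N]/I_N$ is an honest homomorphism $G_{k(\mu_N)}\to\GL_2(\wh\Z[\D_N]/I_N)$, independent of $v$ (cf. the proof of Proposition \ref{p4.7}), and $F_\l(\s)(s,t)=\chi_N^{a,b}\bigl(\tr\bar M_\l(\s)\bigr)$ is the character of $\chi_N^{a,b}\circ\bar M_\l$. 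Likewise $K_{1-\l}^{-s}$ and $K_{1-\l}^{-s-t}$ are characters of $G_{k(\mu_N)}$, since the Kummer map is a homomorphism there. Consequently each side of the two identities, reduced modulo $l^m$, is a continuous class function $G_{k(\mu_N)}\to\Z/l^m\Z\ot\Z[\mu_N]$ factoring through a finite Galois quotient $\Gal(F/k(\mu_N))$; for the right-hand sides one uses that the product of a character and a representation-trace is again a class function (the trace of the tensor product). By Chebotarev, every conjugacy class of $\Gal(F/k(\mu_N))$ contains the Frobenius of infinitely many primes $v$ outside the finite bad set (those with $v\mid\l(1-\l)Nl$ or $v$ ramified in $F$); as both sides are class functions, it suffices to check each identity on Frobenius elements $\s_v$.

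On such $\s_v$ the verification is mechanical. Writing $\a=\vp_{N,v}^a$, $\b=\vp_{N,v}^b$ and $\ol\l=\l\bmod v$, Theorem \ref{t4.8} gives $\pi_l F_\l(\s_v)(s,t)=1\ot F(\a,\b;\e;\ol\l)$ and $\pi_l F_\l(\s_v)(-s,-t)=1\ot F(\ol\a,\ol\b;\e;\ol\l)$, while \eqref{e4.5} gives $K_{1-\l}^{-s-t}(\s_v)=\overline{\a\b}(1-\ol\l)$ and $K_{1-\l}^{-s}(\s_v)=\ol\a(1-\ol\l)$. For the Pfaff case one also applies interpolation at the argument $\tfrac{\l}{\l-1}$: this is legitimate because $\tfrac{\l}{\l-1}\in L(k)$ and $v\nmid\tfrac{\l}{\l-1}\bigl(1-\tfrac{\l}{\l-1}\bigr)N=\tfrac{-\l}{(\l-1)^2}N$ whenever $v\nmid\l(1-\l)N$, yielding $\pi_l F_{\l/(\l-1)}(\s_v)(s,-t)=1\ot F\bigl(\a,\ol\b;\e;\tfrac{\ol\l}{\ol\l-1}\bigr)$. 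Substituting, the asserted Euler (resp. Pfaff) equality on $\s_v$ becomes exactly the finite-field identity \eqref{e4.3} (resp. \eqref{e4.4}) with $\g=\e$, valid since $\a,\b\ne\e$. Thus both sides agree on all good Frobenii, and Chebotarev propagates each equality to all of $G_{k(\mu_N)}$.

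I expect the main obstacle to be not the Frobenius computation, which is routine once the interpolation theorem is in hand, but the structural step of the second paragraph: verifying that passing to level $N$ and restricting to $G_{k(\mu_N)}$ turns the $1$-cocycle $M_\l^v$ and the Kummer map into genuine representations/homomorphisms, so that both sides reduce to continuous class functions through a finite quotient. This is precisely what licenses the application of Chebotarev and the evaluation of the interpolation formula at (conjugacy classes of) Frobenii.
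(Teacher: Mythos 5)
Your proof follows the paper's own argument exactly: reduce via $\pi_l$ and the Chebotarev density theorem to Frobenius elements $\s_v$ at good primes, then invoke Theorem \ref{t4.8} and \eqref{e4.5} to translate each identity into the finite-field formulas \eqref{e4.3} and \eqref{e4.4} with $\g=\e$ (your extra care about class functions and the good reduction of the argument $\l/(\l-1)$ just makes explicit what the paper leaves implicit). The only omission is that you restrict to $a,b\not\equiv 0\pmod N$ while the theorem is asserted on all of $\sL$; the paper defers the cases $s=0$ or $t=0$ to Section \ref{s5.1} (Theorem \ref{t4.14}), and your argument needs the same supplement there.
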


\begin{proof}
It suffices to compare the images under $\pi_l$ for each prime number $l$. 
Let $\l\in L(k)$ and $(s,t)=(a/N,b/N)$. By the Chebotarev density theorem, it suffices to prove the formula for Frobenius lifts $\s_v \in G_{k(\mu_N)}$, where $v$ is a prime of $k(\mu_N)$ such that $\l$ is $v$-integral and $v \nmid \l(1-\l)Nl$. 
By Theorem \ref{t4.8} and \eqref{e4.5}, the formulas for $s, t \ne 0$ reduce to \eqref{e4.3} and \eqref{e4.4} by letting $\a=\vp_{N,v}^a$, $\b=\vp_{N,v}^b$ and $\g=\e$. 
The cases when $s=0$ or $t=0$ will be proved in Section \ref{s5.1}. 
\end{proof}

\begin{rmk}\label{r4.7}
We can also derive Theorem \ref{t4.9} from the motivic Euler and Pfaff transformation formulas (see Remark \ref{r4.5}). 
\end{rmk}

Over the complex numbers, some quadratic and resulting quartic transformation formulas are known by Gauss and Kummer 
(cf. \cite[2.1.5]{erdelyi}, \cite[Section 4]{otsubo4}). 
Their finite field analogues are known (see \cite[Theorem 5.1, Corollary 5.3, Theorem 5.4, Theorem 5.5 and Corollary 5.6]{otsubo2}). 
By a similar argument as in the proof of the previous theorem, we obtain the following adelic analogues.  
See Section \ref{s5.1} below for the case $2s=0$ in \eqref{e4.6}--\eqref{e4.9} and the case $s=0$ in \eqref{e4.10}. 

\begin{thm}\label{t4.10} 
We have the following equalities as far as the both sides are defined:
\begin{align}
& K_{1+\l}^{2s}(\s) F_\l(\s)(2s,2s)=F_{1-\left(\frac{1-\l}{1+\l}\right)^2}(\s)\left(s,s+\frac{1}{2}\right), \label{e4.6}
\\& K_{1+\l}^{2s}(\s) F_{\l^2}(\s)\left(s, s+\frac{1}{2}\right)
=F_{1-\frac{1-\l}{1+\l}}(\s)\left(2s,\frac{1}{2}\right),   \label{e4.7}
\\& F_\l(\s)(2s,-2s)=F_{1-(1-2\l)^2}(\s)\left(s,\frac{1}{2}-s\right),  \label{e4.8}
\\& 
K_{1+\l}^{2s}(\s)F_{-\l}(\s)(2s,2s)=F_{1-\left(\frac{1-\l}{1+\l}\right)^2}(\s)\left(s,\frac{1}{2}-s\right),  \label{e4.9}
\\&
K_{1+\l}^{2s}(\s) F_{\l^2}(\s)(s,s)=F_{1-\left(\frac{1-\l}{1+\l}\right)^2}(\s)\left(s,\frac{1}{2}\right),  \label{e4.10}
\\&
K_{1+3\l}^\frac{1}{2}(\s) F_{\l^2}(\s)\left(\frac{1}{4}, \frac{3}{4}\right)
=F_{1-\left(\frac{1-\l}{1+3\l}\right)^2}(\s)\left(\frac{1}{4},\frac{3}{4}\right),   \label{e4.11}
\\&
F_{\l^4}(\s)\left(\frac{1}{2}, \frac{1}{2}\right)
=F_{1-\left(\frac{1-\l}{1+\l}\right)^4}(\s)\left(\frac{1}{2},\frac{1}{2}\right),   \label{e4.12}
\\&
F_{-\l^2}(\s)\left(\frac{1}{2}, \frac{1}{2}\right)=
F_{1-\left(\frac{1-\l}{1+\l}\right)^4}(\s)\left(\frac{1}{4},\frac{1}{2}\right).   \label{e4.13}
\end{align}
\qed
\end{thm}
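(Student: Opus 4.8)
The plan is to follow verbatim the strategy of the proof of Theorem \ref{t4.9}, reducing each of the eight adelic identities to its finite-field counterpart by interpolation and the Chebotarev density theorem. First I would fix a prime number $l$ and observe that it suffices to compare the images under $\pi_l$ of the two sides of each of \eqref{e4.6}--\eqref{e4.13}, since an equality in $\wh\Z \ot \Z^\ab$ can be tested after projecting to every $\Z_l \ot \Z^\ab$. Fixing a level $N$ carrying all the parameters occurring (so $N$ even and divisible by the relevant denominators) and a number field $k \supset \mu_N$ over which $\l$ and its transform are defined, both sides of each identity are functions of $\s \in G_{k(\mu_N)}$, and by Chebotarev density the Frobenius lifts $\s_v$ form a dense set. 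Hence it is enough to verify each identity at $\s = \s_v$ for all primes $v$ of $k(\mu_N)$ outside a finite exceptional set.

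Next, for such a Frobenius lift I would apply the interpolation Theorem \ref{t4.8} to rewrite every factor $F_{?}(\s_v)(\cdot,\cdot)$ as the finite-field value $F(\vp_{N,v}^{\bullet},\vp_{N,v}^{\bullet};\e; ?\bmod v)$ over the residue field $\k_v$, and use \eqref{e4.5} to identify each Kummer factor $K_{1+\l}^{2s}(\s_v)$, $K_{1+3\l}^{1/2}(\s_v)$, and so on, with the corresponding power-residue character evaluated at $1+\l$, $1+3\l$, \dots\ modulo $v$. Under these substitutions each displayed adelic equation becomes precisely one of the finite-field quadratic or quartic transformation formulas of \cite[Theorem 5.1, Corollary 5.3, Theorem 5.4, Theorem 5.5 and Corollary 5.6]{otsubo2}, after setting $\a=\vp_{N,v}^a$, $\b=\vp_{N,v}^b$ and $\g=\e$. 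Here one uses that $\vp_{N,v}$ has exact order $N$, so that for even $N$ the half-integer parameter $\tfrac12$ corresponds to the quadratic character $\vp_{N,v}^{N/2}$, compatibly across the required levels by the compatibility of the $\vp_{N,v}$ with the transition maps.

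The bookkeeping requiring care is the choice of the density set. For an identity relating $F$ at $\l$ to $F$ at a transformed argument $\l'$ (e.g.\ $\l'=1-(\tfrac{1-\l}{1+\l})^2$), I would restrict to primes $v\nmid Nl$ at which both $\l$ and $\l'$ are $v$-integral units and both $X_{N,\l}$ and $X_{N,\l'}$ have good reduction, i.e.\ $v\nmid \l(1-\l)\l'(1-\l')$. This excludes only finitely many primes, so the density argument still applies; but one must confirm that the clause ``as far as both sides are defined'' in the statement encodes exactly $\l,\l'\in L(k)$ together with the requirement that all characters $\a,\b,\ol{\a\b},\dots$ occurring differ from $\e$. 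The degenerate cases $2s=0$ in \eqref{e4.6}--\eqref{e4.9} and $s=0$ in \eqref{e4.10} are precisely those where a character collapses to $\e$, and these are deferred to Section \ref{s5.1}.

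The main obstacle I expect is combinatorial rather than conceptual: matching, for each of the eight identities, the exact transformed argument and the exact power-residue prefactor on the finite-field side, so that the specialization of the adelic formula coincides on the nose with the cited theorem of \cite{otsubo2} and not with a variant differing by a Jacobi-sum factor or a sign $\vp_{N,v}(-1)$. Since Theorem \ref{t4.8} identifies $F_\l(\s_v)$ with the $\g=\e$ hypergeometric function, any such discrepancy must be absorbed into the Kummer characters through \eqref{e4.5}; checking this term by term, particularly for the quartic formulas \eqref{e4.12}--\eqref{e4.13} where two successive quadratic transformations are composed, is the one point where an explicit computation cannot be avoided.
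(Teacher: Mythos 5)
Your proposal is correct and matches the paper's argument exactly: the paper proves Theorem \ref{t4.10} by the same reduction used for Theorem \ref{t4.9}, namely interpolation via Theorem \ref{t4.8} and \eqref{e4.5} followed by the Chebotarev density theorem, citing the finite-field quadratic and quartic transformation formulas of \cite{otsubo2}, with the degenerate cases deferred to Section \ref{s5.1}. Nothing further is needed.
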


For example, \eqref{e4.10} is an analogue of Gauss's formula 
\[(1+\l)^{2a }F\left(a,a;1;\l^2\right) = F\left(a,\frac{1}{2};1;1-\left(\frac{1-\l}{1+\l}\right)^2 \right).\]
There is a cubic analogue  
\[(1+2\l) F\left(\frac{1}{3},\frac{2}{3};1;\l^3\right)=F\left(\frac{1}{3},\frac{2}{3};1;1-\left(\frac{1-\l}{1+2\l}\right)^3\right)\]
proved by Borwein-Borwein \cite[p. 694]{borweins}. (See \cite{otsubo4} for simple proofs of these formulas.)  
A finite analogue of the latter is proved by Fuselier et. al. \cite[(9.1)]{fuselieretal}, from which we can deduce as before 
the following adelic analogue. 

\begin{thm}
For any $\l \in L(k)$ with $\l^3 \ne 1$, $1+2\l\ne 0$ and any $\s \in G_{k(\m_3)}$, 
\[F_{\l^3}(\s)\left(\frac{1}{3},\frac{2}{3}\right) = F_{1-\left(\frac{1-\l}{1+2\l}\right)^3}(\s) \left(\frac{1}{3},\frac{2}{3}\right) .\]
\qed
\end{thm}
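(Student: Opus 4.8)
The plan is to imitate the proof of Theorem \ref{t4.9}, deducing the adelic identity from the finite field cubic transformation of Fuselier et al.\ \cite[(9.1)]{fuselieretal} by means of interpolation and the Chebotarev density theorem. First I would reduce to checking, for every prime number $l$, the equality of the images of the two sides under $\pi_l\colon \wh\Z\ot\Z^\ab \to \Z_l \ot \Z^\ab$, since an element of $\wh\Z\ot\Z^\ab$ is determined by all of these. For a fixed $l$, both sides are continuous $\Z_l\ot\Z^\ab$-valued functions of $\s\in G_{k(\m_3)}$, so by the Chebotarev density theorem it suffices to verify the identity when $\s=\s_v$ is a lift of Frobenius at a prime $v$ of $k(\m_3)$.

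Writing $w=1-\left(\frac{1-\l}{1+2\l}\right)^3$ for the right-hand argument, I would restrict to primes $v$ of $k(\m_3)$ lying outside a finite bad set: those $v$ with $v\nmid 3l$ at which $\l$ and $(1+2\l)^{-1}$ are $v$-integral and at which both hypergeometric curves $X_{3,\l^3}$ and $X_{3,w}$ have good reduction. This excludes only finitely many primes, which is harmless for the density argument, and at the remaining $v$ the reductions satisfy $\l^3\equiv(\l\bmod v)^3$ and $w\equiv 1-\left(\frac{1-\l}{1+2\l}\bmod v\right)^3$ in $\k_v$, so that the two arguments reduce compatibly. Applying the interpolation Theorem \ref{t4.8} with $N=3$, $(a,b)=(1,2)$ and $\g=\e$ then converts the desired equality $\pi_l(F_{\l^3}(\s_v)(1/3,2/3))=\pi_l(F_w(\s_v)(1/3,2/3))$ into the finite field statement
\[F\!\left(\vp_{3,v},\vp_{3,v}^2;\e;\l^3\bmod v\right)=F\!\left(\vp_{3,v},\vp_{3,v}^2;\e;w\bmod v\right),\]
which is exactly \cite[(9.1)]{fuselieretal} for the order-three character $\vp_{3,v}$.

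The one point deserving care is the absence of any Kummer factor on the adelic side, in contrast to the factor $(1+2\l)$ in the Borwein--Borwein complex identity. This is accounted for by the finite field invisibility of integral parameter shifts recorded in the remark after Theorem \ref{t4.8}. Under the translation used in the proof of Theorem \ref{t4.9}, a classical factor $(1+2\l)^e$ corresponds to the Galois character $K_{1+2\l}^{e}$ with $e$ read modulo $\Z$; since the complex exponent here is the integer $1$, one has $e\equiv 0$ in $\Q/\Z$---equivalently $\vp_{3,v}^{3}=\e$ as $\vp_{3,v}$ has order three---so this character is trivial and no $K$-term survives. Once this normalization is matched, the Chebotarev argument goes through verbatim and the theorem follows.
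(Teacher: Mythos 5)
Your proposal is correct and follows exactly the route the paper intends: the theorem is stated with the remark that it is deduced ``as before'' from the finite field identity \cite[(9.1)]{fuselieretal}, i.e.\ by the same $\pi_l$-reduction, interpolation (Theorem \ref{t4.8}) and Chebotarev density argument used for Theorem \ref{t4.9}. Your additional observation that the classical factor $(1+2\l)$ has integer exponent and therefore leaves no Kummer character on the adelic side is a correct and worthwhile clarification of why no $K$-term appears.
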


Recall Kummer's summation formula for the complex hypergeometric function (cf. \cite[2.8 (47)]{erdelyi})
\[F(2a,b;2a-b+1;-1)=\frac{\G(2a-b+1)\G(a+1)}{\G(2a+1)\G(a-b+1)}.\]
In particular, 
\[F(2a,2a;1;-1) = \frac{\sin \pi a}{2\pi} B(a,a).\]
Its finite analogue is as follows. If $\k$ is a finite field of odd characteristic, then  
\[F(\a^2,\b;\a^2\ol\b;-1)=\sum_{\a'\in\ck,\a'^2=\a^2} \frac{g^\0(\a^2\ol\b) g(\a')}{g(\a^2)g^\0(\a'\ol\b)}\] 
for any $\a, \b \in \ck$ \cite[theorem 4.6 (ii)]{otsubo2}. 
In particular, 
\[F(\a^2,\a^2;\e;-1) = \sum_{\a'\in\ck, \a'^2=\a^2} \a'(-1) j(\a',\a') \]
for any $\a$. 
From this and Theorem \ref{t4.8}, one derives the following adelic analogue similarly as in the proof of Theorem \ref{t4.9} (see Section \ref{s5.1} below for the case $2s=0$). 

\begin{thm}\label{t4.12} 
If $s \in N^{-1}\Z/\Z$ and $\s \in G_{\Q(\m_N)}$, then 
\[F_{-1}(\s)(2s,2s) = K_{-1}(\s)^s B(\s)(s,s) + K_{-1}(\s)^{s+\frac{1}{2}} B(\s)\left(s+\frac{1}{2},s+\frac{1}{2}\right).\]
\qed
\end{thm}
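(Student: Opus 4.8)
The plan is to follow the same Chebotarev-reduction strategy used for Theorems~\ref{t4.9} and~\ref{t4.10}, deriving the adelic summation formula from its finite field analogue. Since both sides of the asserted identity are functions $G_{\Q(\m_N)} \to \wh\Z \ot \Z^\ab$, it suffices to compare their images under each projection $\pi_l$ for every prime number $l$, and then to verify equality after evaluating at Frobenius lifts $\s_v$ of a density-one set of primes $v$ of $\Q(\m_N)$; by the Chebotarev density theorem the two continuous functions on $G_{\Q(\m_N)}$ must then coincide.

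First I would fix a prime $l$ and restrict to primes $v \nmid 2Nl$ of $\Q(\m_N)$ at which $-1$ is a $v$-unit (automatic) and $X_{N,-1}$ has good reduction. For such $v$ with Frobenius lift $\s_v$, the interpolation property (Theorem~\ref{t4.8}) gives
\[
\pi_l\!\left(F_{-1}(\s_v)(2s,2s)\right) = 1 \ot F(\vp_{N,v}^{2a},\vp_{N,v}^{2a};\e;-1),
\]
where $s = a/N$. Applying the finite analogue of Kummer's formula quoted just above the statement, with $\a = \vp_{N,v}^a$, this equals
\[
1 \ot \sum_{\a' \in \wh{\k_v^\times},\,\a'^2 = \vp_{N,v}^{2a}} \a'(-1)\, j(\a',\a').
\]
The two characters $\a'$ satisfying $\a'^2 = \vp_{N,v}^{2a}$ are $\vp_{N,v}^a$ and $\vp_{N,v}^a \cdot \vp_{2,v}$, where $\vp_{2,v}$ is the quadratic residue character; in terms of the parameter $(q_v-1)^{-1}\Z/\Z$ these correspond to $s$ and $s + \tfrac12$. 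Thus the sum has exactly two terms, matching the two terms on the right-hand side of the claimed identity.

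Next I would match each term against the right-hand side using the interpolation property of the adelic beta function (equation~\eqref{e4.14}) together with the Kummer character computation~\eqref{e4.5}. Concretely, $\pi_l(B(\s_v)(s,s)) = 1 \ot j(\vp_{N,v}^a,\vp_{N,v}^a)$ and $\pi_l(K_{-1}(\s_v)^s) = 1 \ot \vp_{N,v}^a(-1)$, so the first term $K_{-1}(\s)^s B(\s)(s,s)$ interpolates $\vp_{N,v}^a(-1)\,j(\vp_{N,v}^a,\vp_{N,v}^a)$, which is precisely the $\a' = \vp_{N,v}^a$ summand. The second term requires identifying $\vp_{N,v}^a \cdot \vp_{2,v}$ with the parameter $s + \tfrac12$ and checking that $B(\s)(s+\tfrac12,s+\tfrac12)$ and $K_{-1}(\s)^{s+1/2}$ interpolate $j(\a',\a')$ and $\a'(-1)$ respectively for this $\a'$; here $K_{-1}^{s+1/2}$ and the exact order-$2N$ structure of $\vp_{2N,v}$ enter, so I must work at level $2N$ rather than $N$ and use that $\vp_{2N,v}^N = \vp_{2,v}$.

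The main obstacle is bookkeeping around the half-integer shift. The beta function $B(\s)(\cdot,\cdot)$ is a priori only guaranteed to interpolate Jacobi sums $j(\vp_{N,v}^a,\vp_{N,v}^b)$ when $a+b \not\equiv 0$ (the condition in~\eqref{e4.14}); for the term at $s+\tfrac12$ with $s$ arbitrary I must confirm that $2s+1 \ne 0$ in $(2N)^{-1}\Z/\Z$, i.e. handle the degenerate case and ensure the interpolation formula genuinely applies, possibly invoking the relative-homology extension promised in Section~\ref{s5.1} for boundary parameters. The separately stated case $2s = 0$ (where $\a' = \a$ forces a collapse of the two summands) is deferred there as noted in the parenthetical remark. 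Apart from this, the argument is routine, as both sides are determined by the finite-field identity via Chebotarev.
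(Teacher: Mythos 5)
Your proposal is correct and follows essentially the same route as the paper: the paper derives Theorem \ref{t4.12} from the finite-field Kummer summation formula via Theorem \ref{t4.8}, the interpolation properties \eqref{e4.14} and \eqref{e4.5}, and the Chebotarev density theorem, exactly as in the proof of Theorem \ref{t4.9}, deferring the degenerate case $2s=0$ (where \eqref{e4.14} does not apply and one needs the values $B(\s_v)(0,0)$ and $B(\s_v)(\tfrac12,\tfrac12)$ computed in Section \ref{s5.1}). Your identification of the two square roots $\vp_{N,v}^a$ and $\vp_{N,v}^a\vp_{2,v}$ with the parameters $s$ and $s+\tfrac12$ at level $2N$, and of the only problematic case as $2s=0$, matches the paper's treatment.
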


\section{Complements}
\subsection{Open curves}\label{s5.1}

By our construction, the values of $F_\l(\s)(s,t)$ when $s=0$ or $t=0$ cannot be recovered from the homology of $X_{N,\l}$. To remedy this, we introduce open curves and relative homology similarly as in the case of Fermat curves (see Section \ref{s3.4}). 

For $\l \in L(\C)$, let $Y_{N,\l} \subset X_{N,\l}$ be the affine part, put $Z_{N,\l}=X_{N,\l}-Y_{N,\l}$ and let $W_{N,\l} \subset Y_{N,\l}$ be the closed subvariety defined by $xy=0$. 
We have exact sequences of $\Z[\D_N]$-modules 
\begin{equation}\label{e5.1}
0\to H_2(X_{N,\l},\Z) \to H_0(Z_{N,\l},\Z) \to H_1(Y_{N,\l},\Z) \to H_1(X_{N,\l},\Z) \to 0, 
\end{equation}
\begin{equation}\label{e5.2}
0\to H_1(Y_{N,\l},\Z) \to H_1(Y_{N,\l},W_{N,\l};\Z) \os{\pd}\to H_0(W_{N,\l},\Z) \to H_0(Y_{N,\l},\Z) \to 0. 
\end{equation}
Since both $Z_{N,\l}$ and $W_{N,\l}$ consist of $2N$ points, one easily computes that 
\[\rank H_1(Y_{N,\l},\Z) =2N^2-2N+1, \quad \rank H_1(Y_{N,\l},W_{N,\l};\Z) =2N^2.\] 
Tensoring \eqref{e5.1}, \eqref{e5.2} with $\C$, and decomposing them by the $\D_N$-action, we obtain
\begin{align*}
& H_2(X_{N,\l},\C)=H_2(X_{N,\l},\C)(\chi_N^{0,0}), \\
& H_0(Z_{N,\l},\C)=\bigoplus_{b\in\Z/N\Z} H_0(Z_{N,\l}^1,\C)(\chi_N^{0,b}) \oplus \bigoplus_{a\in\Z/N\Z} H_0(Z_{N,\l}^2,\C)(\chi_N^{a,0}), \\
& H_0(W_{N,\l},\C)=\bigoplus_{b\in\Z/N\Z} H_0(W_{N,\l}^1,\C)(\chi_N^{0,b}) \oplus \bigoplus_{a\in\Z/N\Z} H_0(W_{N,\l}^2,\C)(\chi_N^{a,0}), \\
& H_0(Y_{N,\l},\C)=H_0(Y_{N,\l},\C)(\chi_N^{0,0}), 
\end{align*}
where $Z_{N,\l}^1=\{x=\infty\}$, $Z_{N,\l}^2=\{y=\infty\}$, $W_{N,\l}^1=\{x=0\}$, $W_{N,\l}^2=\{y=0\}$, 
and all the eigenspaces in the right members are one-dimensional.

Let $\a_{N,\l}, \b_{N,\l} \in H_1(Y_{N,\l},\Z)$ and $\d_{N,\l} \in H_1(Y_{N,\l},W_{N,\l};\Z)$ 
denote the classes of the loops and the path, respectively, defined in Section \ref{s2.3}. 
Recall that $\b_{N,\l}=((1-\x_N)(1-\y_N))_*\d_{N,\l}$. 

\begin{thm}\label{t5.1}\ 
\begin{enumerate}
\item $H_1(Y_{N,\l},\Z)$ is generated by $\a_{N,\l}$ and $\b_{N,\l}$ as a $\Z[\D_N]$-module. 
\item $H_1(Y_{N,\l},W_{N,\l};\Z)$ is a free $\Z[\D_N]$-module generated by $\a_{N,\l}$ and $\d_{N,\l}$. 
\end{enumerate}
\end{thm}

\begin{proof}
Consider the $1$-form $\o_{N,\l}^{a,b}$  as defined in Section \ref{s2.2} with $a, b \in \{1,\dots, N\}$. 
It is holomorphic on $X_{N,\l}$ except for having logarithmic poles at $Z_{N,\l}^1$ (resp. $Z_{N,\l}^2$) if $a=N$ (resp. $b=N$) (see \eqref{e2.4}). 
Hence it defines a class of $H^1(Y_{N,\l},\C)(\chi_N^{a,b})$. 
Let $\g_1, \g_2 \in H_1(Y_{N,\l},\Z)$ denote the classes of positively-oriented small loops around the points 
$(\infty,(1-\l)^{-\frac{1}{N}}) \in Z_{N,\l}^1$ and $((1-\l)^{-\frac{1}{N}},\infty) \in Z_{N,\l}^2$, respectively. Then we have by \eqref{e2.4}
\begin{align*}
\frac{1}{2\pi i}\int_{\g_1} \o_{N,\l}^{N,b} &= N(1-\l)^{-\frac{b}{N}}, &\frac{1}{2\pi i} \int_{\g_1} \o_{N,\l}^{a,N} &= 0 \ (a \ne N), 
\\ 
\frac{1}{2\pi i} \int_{\g_2} \o_{N,\l}^{N,b} &= 0 \ (b \ne N),& \frac{1}{2\pi i}\int_{\g_2} \o_{N,\l}^{a,N} &= -N(1-\l)^{-\frac{a}{N}}. 
\end{align*}
It follows that the $2N-1$ classes $\o^{a,b}_{N,\l}$ ($a=N$ or $b=N$) are non-trivial and give a basis of $\Coker (H^1(X_{N,\l},\C) \to H^1(Y_{N,\l},\C))$.  
On the other hand, we have 
\[\frac{1}{2\pi i}\int_{\a_{N,\l}} \o^{N,b} = (1-\l)^{-\frac{b}{N}}, \quad \frac{1}{2\pi i}\int_{\a_{N,\l}} \o^{a,N} = (1-\l)^{-\frac{a}{N}}\]
by the computation of Theorem \ref{t2.2}. It follows that 
\[\left(\sum_{i=0}^{N-1} \x_N^i\right)_* \a_{N,\l}=\g_1, \quad \left(\sum_{i=0}^{N-1} \y_N^i\right)_* \a_{N,\l}= -\g_2\]
since both sides agree as functionals on $H^1(Y_{N,\l},\C)$. 
Note that $\sum_{i=0}^{N-1} \x_N^i$ and $\sum_{i=0}^{N-1} \y_N^i$ annihilates $H_1(X_{N,\l},\Z)$, 
and $\g_1$ and $\g_2$ are trivial in $H_1(X_{N,\l},\Z)$. 
Since $\g_1$ and $\g_2$ generate $\Im(H_0(Z_{N,\l},\Z) \to H_1(Y_{N,\l},\Z))$ as a $\Z[\D_N]$-module, 
we obtain (i) from Theorem \ref{t2.1}. 

Since $\pd(\d_{N,\l})=[(1,0)]-[(0,1)]$ generates 
$\Ker(H_0(W_{N,\l},\Z) \to H_0(Y_{N,\l},\Z))$ as a $\Z[\D_N]$-module, 
(ii) implies that 
$H_1(Y_{N,\l},W_{N,\l};\Z)$ is generated over $\Z[\D_N]$ by $\a_{N,\l}$ and $\d_{N,\l}$, 
and  (i) follows by comparing the ranks. 
\end{proof}

\begin{cor}\label{c5.2}
Let the notations be as in Section \ref{s3.1}. 
\begin{enumerate}
\item $H_1(Y_{\infty,\l},\wh \Z)$ is a free $\L$-module of rank two generated by $\a_\l$ and $\b_\l$, and we have an isomorphism 
$H_1(Y_{\infty,\l},\wh \Z) \xrightarrow\simeq H_1(X_{\infty,\l},\wh \Z)$. 
\item $H_1(Y_{\infty,\l},W_{\infty,\l};\wh \Z)$ is a free $\L$-module of rank two generated by $\a_\l$ and $\d_\l$. 
\end{enumerate}
\end{cor}
\begin{proof}
The statement (ii) is immediate from Theorem \ref{t5.1} (ii), from which follows the freeness in (i), since $\b_\l=((1-\x)(1-\y))_*\d_\l$ and $(1-\x)(1-\y) \in \L$ is not a zero-divisor. Then the isomorphism in (i) follows by Corollary \ref{c4.2} (i) and Theorem \ref{t5.1} (i). 
\end{proof}

%

Now suppose that $k \subset \ol\Q$ and $\l \in L(k)$. 
Let 
\[M_\l\colon G_k \to \GL_2(\L) \quad (\text{resp. $\wt{M}_\l \colon G_k \to \GL_2(\L)$})\] 
be the cocycle defined by 
$H_1(Y_{\infty,\l},\wh \Z)$ (resp. $H_1(Y_{\infty,\l},W_{\infty,\l};\wh \Z)$) 
with respect to the $\L$-basis given in Corollary \ref{c5.2} 
(see Section \ref{s3.3}). 

\begin{ppn}\label{p5.3}
If $\s \in G_{k(\m_N)}$ and $s, t \in N^{-1}\Z/\Z$, then we have 
\[\tr M_\l(\s)(s,t)=\tr \wt{M}_\l(\s)(s,t). \]
\end{ppn}
\begin{proof}
Since $\b_\l=((1-\x)(1-\y))_*\d_\l$, we have
\[\bmat{1&0\\0& (1-\x)(1-\y)} M_\l(\s) = \wt{M}_\l(\s) \bmat{1&0\\0& \s((1-\x)(1-\y))}. \]
Then follows 
\[\tr \wt{M}_\l(\s)-\tr M_\l(\s) =(1-C_\x(\s) C_\y(\s)) \wt{m}_{2,2}(\s),\]
where $\wt{m}_{2,2}(\s)$ is the $(2,2)$-component of $\wt{M}_\l(\s)$ and 
$C_\x(\s)$, $C_\y(\s)\in \L^\times$ are as defined in Section \ref{s3.4}. 
Since $C_\x(\s)(s,t)=C_\y(\s)(s,t)=1$, the proposition follows. 
\end{proof}

By Artin's comparison theorem, the exact sequences  \eqref{e5.1}, \eqref{e5.2} tensored with $\Q_l$ admit $G_k$-actions (see Section \ref{s3.2}).  
The $\chi_N^{a,b}$-components of the relative homology when $a=0$ or $b=0$ are described as follows. 

\begin{ppn}\label{p4.13}Suppose that $\m_N \subset k$. 
Then we have exact sequences of $\ol\Q_l$-modules with $G_k$-action  
\begin{align*}
0 \to \ol\Q_l (1)(K_{1-\l}^{-\frac{b}{N}}) \to & H_1(Y_N,W_{N,\l};\ol\Q_l)(\chi_N^{b,0}) \to \ol\Q_l \to 0, 
\\0 \to \ol\Q_l (1)(K_{1-\l}^{-\frac{a}{N}}) \to & H_1(Y_N,W_{N,\l};\ol\Q_l)(\chi_N^{a,0}) \to \ol\Q_l \to 0,
\end{align*}
for any $a, b \in \Z/N\Z$. 
Here, for a character $K$ of $G_k$, $\ol\Q_l(1)(K)$ is a one-dimensional $\ol\Q_l$-vector space on which $G_k$ acts by the product character $\cyc \cdot K$. 
\end{ppn}

\begin{proof}
First, $H_2(X_{N,\l},\ol\Q_l)=\ol\Q_l(1)$ and $H_0(Y_{N,\l},\ol\Q_l)=\ol\Q_l$. 
Secondly, $W_{N,\l}$ splits into $2N$ points over $k$ and each eigencomponent of 
$H_0(W_{N,\l})$ is isomorphic to $\ol\Q_l$. 
On the other hand, $Z_{N,\l}$ splits into $2N$ points only over $k((1-\l)^\frac{1}{N})$, and $G_k$ acts on $H_0(Z_{N,\l}^1,\ol\Q_l)(\chi_N^{0,b})$ (resp. $H_0(Z_{N,\l}^2,\ol\Q_l)(\chi_N^{a,0})$) by the character $K_{1-\l}^{-\frac{b}{N}}$ (resp. $K_{1-\l}^{-\frac{a}{N}}$). 
Note that in the $l$-adic version of the exact sequence \eqref{e5.1}, the second non-trivial term is replaced by the Tate twist $H_0(Z_{N,\l},\ol\Q_l)(1)$. 
Finally by Proposition \ref{p2.2}, we have $H_1(X_{N,\l},\ol\Q_l)(\chi_N^{a,b})=0$ if $a=0$ or $b=0$. 
Hence the proposition follows. 
\end{proof}

\begin{thm}\label{t4.14}
Let $\l$, $v$, $\s_v$ and $l$ be as in Theorem \ref{t4.8}. Then we have 
\begin{align*}
\pi_l\left(F_\l(\s_v)\left(0,\frac{b}{N}\right)\right) &=1 \ot \left(1+q_v\vp_{N,v}^{-b}(1-\l \text{ {\rm mod }} v) \right),\\
\pi_l\left(F_\l(\s_v)\left(\frac{a}{N},0\right)\right) &= 1 \ot \left(1+q_v \vp_{N,v}^{-a}(1-\l \text{ {\rm mod }} v) \right),
\end{align*}
for any $a, b \in \Z/N\Z$. 
Moreover, the formula of Theorem \ref{t4.8} remains true for all $a, b \in \Z/N\Z$. 
\end{thm}

\begin{proof}
The proof of the three equalities is parallel to the proof of Theorem \ref{t4.8}: it follows by Propositions \ref{p5.3}, \ref{p4.13} and \eqref{e4.5}. 
For the last statement, recall that over the complex numbers we have
$F(a,1;1;\l)=(1-\l)^{-a}$ by the binomial theorem. 
Its analogue over a finite field $\k$ of $q$ elements is that 
\[F(\a,\e;\e;\l)=1+q \ol\a(1-\l) \quad (\l\in\k-\{0,1\})\]
for any $\a \in \ck$ \cite[Example 3.9 (iii)]{otsubo2}.  
Applying this to $\a=\e$, $\vp_{N,v}^a$ and $\vp_{N,v}^b$, we obtain the result. 
\end{proof}

By Theorem \ref{t4.14} and the Chebotarev density theorem, we can prove the remaining exceptional cases of  Theorems \ref{t4.9} \ref{t4.10} and \ref{t4.12}. 
For Theorem \ref{t4.10} \eqref{e4.7}, note that 
$\vp_{2,v}(1-\l^2)=\vp_{2,v}\bigl(\frac{1+\l}{1-\l}\bigr)$. 
For Theorem \ref{t4.12}, one needs 
\[\pi_l\left(B(\s_v)(0,0)\right)=1 \ot 1, \quad 
\pi_l\left(B(\s_v)\left(\frac{1}{2},\frac{1}{2}\right)\right)=1 \ot \vp_{2,v}(-1) q_v,\]
which are not covered by \eqref{e4.14}. 
These follow similarly as Theorem \ref{t4.14} from the construction of $B$ (see Section \ref{s3.4}). 

\subsection{A conjecture}\label{s5.2}
 
To conclude, let us discuss briefly the higher-dimensional generalization of the adelic hypergeometric function. Recall the generalized hypergeometric function over the complex numbers 
\[\FFF{p}{q}{a_1,\dots, a_p}{b_1,\dots, b_q}{\l} = \sum_{n=0}^\infty \frac{\prod_{i=1}^p (a_i)_n}{(1)_n \prod_{j=1}^q (b_j)_n} \l^n. \]
Its finite analogue in full generality is defined in \cite{otsubo2}. 
Here we restrict ourselves to the case where $b_j=1$ for all $j$ and $p=q+1$. 
To construct the adelic analogue of $\FFF{d+1}{d}{a_0,\dots, a_d}{1,\dots, 1}{\l}$, we propose the following. 

\begin{conj}
For positive integers $d$, $N$ and $\l \in L(\C)$, consider the smooth hypersurface $U_{N,\l}^{(d)} \subset \A^{d+1}$ defined by 
\[\prod_{i=0}^d (1-x_i^N) = \l,\]
on which the group $\m_N^{d+1}$ acts naturally. 
Then,  
$\varprojlim_{N} H_d(U_{N,\l}^{(d)},\wh\Z)$ is a free module of rank $d+1$ over $\L:=\wh\Z[[\wh\Z(1)^{d+1}]]$. 
\end{conj}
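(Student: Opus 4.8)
The plan is to follow the proof of Corollary \ref{c4.2}: reduce the statement over $\L$ to a finite-level assertion and then pass to the limit. Write $\D_N^{(d)}:=\m_N^{d+1}$ for the group acting on $U_{N,\l}^{(d)}$, with standard topological generators $\x_0,\dots,\x_d$, and let $I_N^{(d)}\subset\Z[\D_N^{(d)}]$ be the ideal generated by the $d+1$ partial norms $\sum_{i=0}^{N-1}\x_j^{\,i}$ $(0\le j\le d)$. The transition map sends the $j$th partial norm at level $mN$ to $m$ times the one at level $N$, exactly as in Lemma \ref{l3.1}, so the inverse system $(I_N^{(d)}/n)_N$ is Mittag-Leffler and $\L\cong\varprojlim_{N,n}\Z/n\Z[\D_N^{(d)}]/I_N^{(d)}$. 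It therefore suffices to prove, for every $N$, that $H_d(U_{N,\l}^{(d)},\Z)$ is free of rank $d+1$ over $R_N:=\Z[\D_N^{(d)}]/I_N^{(d)}$, a ring that is free of rank $(N-1)^{d+1}$ as an abelian group; taking $\varprojlim_{N,n}$ then gives freeness over $\L$.

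First I would carry out the eigenspace analysis, generalizing Proposition \ref{p2.2} and the computation behind Proposition \ref{p4.13}. Decomposing under $\D_N^{(d)}$, one views $U_{N,\l}^{(d)}$ as a $\m_N^{d+1}$-Kummer cover of the torus-hypersurface $\{\prod_{i}(1-w_i)=\l\}\cong(\C^\times)^d$, ramified along the $d+1$ divisors $w_i=1$; for $d=1$ this is just $U_{N,\l}^{(1)}=Y_{N,\l}$. The $\chi^{\mathbf a}$-eigencomponent of the cohomology ($\mathbf a=(a_0,\dots,a_d)$) is the cohomology of the base with coefficients in a rank-one local system whose monodromy around $w_i=1$ is determined by $a_i$. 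A partial norm $\sum_i\x_j^{\,i}$ acts as zero on this eigenspace precisely when $a_j\ne 0$, so to make $H_d$ a module over $R_N$ I must show it is \emph{purely primitive}: every eigenspace occurring in $H_d(U_{N,\l}^{(d)})$ has all $a_i\ne0$. When some $a_j=0$ the local system has trivial monodromy around $w_j=1$ and hence extends across that divisor, so by affine Artin vanishing together with the middle-degree concentration of nonresonant local-system cohomology its contribution vanishes in degree $d$. For $\mathbf a$ primitive I would then show $\dim_\C H^d(U_{N,\l}^{(d)},\C)(\chi^{\mathbf a})=d+1$: the Leray residues of $\prod_i x_i^{a_i}\,dx_0\wedge\cdots\wedge dx_d/(f-\l)^{k}$, $f=\prod_i(1-x_i^N)$, for $k=1,\dots,d+1$ furnish $d+1$ independent eigenforms (generalizing the pair $\{\o_{N,\l}^{a,b},\y_{N,\l}^{a,b}\}$) whose periods satisfy the order-$(d+1)$ hypergeometric equation of $\FFF{d+1}{d}{a_0/N,\dots,a_d/N}{1,\dots,1}{\l}$, and a Griffiths pole-order argument identifies their span with the whole eigenspace; equivalently $\dim H^d$ equals $(-1)^d$ times the Euler characteristic of the complement of the ramification divisors in the base, which is $d+1$ and recovers $2$ for $d=1$. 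This shows $H_d\ot\C$ is free of rank $d+1$ over $R_N\ot\C$.

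The essential difficulty is to upgrade this correct \emph{rational} rank to \emph{integral} freeness over $R_N$. For $d=1$ this was exactly Theorem \ref{t2.1}, proved by exhibiting explicit cycles $\x_N^i\y_N^j\a_{N,\l}$, $\x_N^i\y_N^j\b_{N,\l}$ and computing that their intersection matrix has unit determinant. In higher dimension $U_{N,\l}^{(d)}$ is a noncompact affine variety, so there is no self-intersection pairing on $H_d$ to play this role directly. One natural attempt is to construct an explicit $R_N$-basis of Pochhammer-type product cycles — iterated vanishing cycles in the $x_i$ generalizing $\a_{N,\l}$, together with lifted-simplex relative cycles generalizing $\d_{N,\l}$ and $\b_{N,\l}$ — and to check unimodularity against a dual basis via the relative homology $H_d(U_{N,\l}^{(d)},W)$, $W=U_{N,\l}^{(d)}\cap\{x_0\cdots x_d=0\}$, and Poincar\'e--Lefschetz duality, mirroring the Fermat-curve computation of Section \ref{s3.4}. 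A more robust alternative is to argue $l$-adically: by the analogue of Remark \ref{r3.1} the pro-$l$ part of $\L$ is the regular local ring $\Z_l[[S_0,\dots,S_d]]$, over which a finitely generated module is free as soon as it is maximal Cohen--Macaulay, so it would suffice to show that $\varprojlim_N H_d(U_{N,\l}^{(d)},\Z_l)$ has depth $d+2$, and then reassemble over all $l$ using the constant rank $d+1$.

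I expect this last step to be where essentially all the work lies. Both routes demand genuinely new input beyond the $d=1$ case: the explicit-basis route requires a higher-dimensional intersection/period calculation with no available self-pairing, while the homological-algebra route requires a depth (equivalently, a class-group-type) statement ruling out projective-but-not-free behaviour over the non-regular rings $R_N$. This is presumably why the assertion is posed as a conjecture rather than proved.
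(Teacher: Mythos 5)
First, note that this statement is posed in the paper as a \emph{conjecture}: the paper offers no proof of it, so there is nothing to compare your argument against, and you yourself concede at the end that the decisive step (integral freeness) is left open. Your overall outline --- reduce over $\L$ to a finite-level statement via the Mittag--Leffler argument of Lemma \ref{l3.1}, do the eigenspace analysis, compute the generic rank $d+1$, and then face the integral structure --- is a reasonable extrapolation of the $d=1$ arguments (Theorem \ref{t2.1}, Corollary \ref{c4.2}, Proposition \ref{p4.11}), and your diagnosis that the real difficulty is upgrading rational rank to integral freeness in the absence of a self-intersection pairing on the middle homology of an affine variety is a fair account of why the assertion remains conjectural.

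However, the finite-level statement you reduce to is false, already for $d=1$, because of your ``purely primitive'' claim. The paper notes that $U_{N,\l}^{(1)}$ is the open curve $X_{N,\l}\cap\{x_1y_1\ne 0\}$, and the exact sequences \eqref{e5.1}--\eqref{e5.2} (see also Proposition \ref{p4.13}) show that $H_1(U_{N,\l}^{(1)},\Z)$ has \emph{nonzero} $\chi_N^{a,0}$- and $\chi_N^{0,b}$-eigencomponents (each of rank one), contributed by the classes of small loops around the removed points $xy=0$; its $\Z$-rank is $2(N-1)^2+2N-1$, not $2(N-1)^2$. Hence $H_d(U_{N,\l}^{(d)},\Z)$ is not annihilated by $I_N^{(d)}$, is not an $R_N$-module, and is certainly not free of rank $d+1$ over $R_N$ at finite level. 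Your appeal to ``affine Artin vanishing and middle-degree concentration'' fails because the quotient map $U_{N,\l}^{(d)}\to U_{N,\l}^{(d)}/\m_N^{d+1}$ is ramified along the coordinate divisors, so the $\chi$-isotypic piece is the cohomology of a direct-image sheaf (a $j_*$-extension with nontrivial punctual contributions), not of a local system on the whole base; its degree-$d$ cohomology does not vanish when some $a_j=0$. The mechanism that can rescue the conjecture is the one used in Proposition \ref{p4.11}: the non-primitive contributions are multiplied by the ramification index $n$ under the transition maps $U_{nN,\l}^{(d)}\to U_{N,\l}^{(d)}$ and therefore die in $\varprojlim_N$. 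So the reduction must be restructured --- one should prove freeness of the primitive quotient at each level together with compatibility of generators under the transition maps (freeness level-by-level alone does not pass to the limit without compatible bases), and only then confront the genuinely open integral question you correctly isolate.
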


Note that when $d=1$, $U_{N,\l}^{(1)}$ is isomorphic to the open subvariety of $X_{N,\l}$ defined by $x_1y_1\ne 0$ (see  \eqref{e1.1} for the notation). 
If the conjecture is true, one can define as before a cocycle $G_k \to \GL_{d+1}(\L)$ for each $\l \in L(k)$, and as its trace the adelic hypergeometric function with $d+1$ numerator parameters and argument $\l$. 

The Betti and de Rham cohomology of $U_{N,\l}^{(d)}$ is computed in \cite[Section 3]{asakura}.
The function $\FFF{d+1}{d}{a_0,\dots, a_d}{1,\dots, 1}{\l}$ with $a_i \in N^{-1}\Z$ appears as a period of $U_{N,\l}^{(d)}$, and the differential equation satisfied by this function is an eigencomponent with respect to the $\m_N^{d+1}$-action of the Gauss-Manin connection of the family $U_{N,\l}^{(d)}$ over $L$. 
On the other hand, one proves as in Section \ref{s4.2} that the number of rational points on $U_{N,\l}^{(d)}$ over a finite field containing $\m_N$ is expressed in terms of the corresponding hypergeometric functions over the finite field.


\end{document}